\newtheorem{propn}{Proposition}[section]
\newtheorem{thm}[propn]{Theorem}
\newtheorem{lemma}[propn]{Lemma}
\newtheorem{cor}[propn]{Corollary}
\newtheorem*{thm*}{Theorem}
\theoremstyle{definition}
\newtheorem{defn}[propn]{Definition}
\newtheorem{rem}{Remark}[section]
\newcommand{\Nat}{\mathbb{N}}
\newcommand{\Int}{\mathbb{Z}}
 \newcommand{\D}{\mathbb{D}}
 \DeclareMathOperator{\ran}{ran}
\newcommand{\clb}{\mathcal{B}}
\newcommand{\cle}{\mathcal{E}}
\newcommand{\clf}{\mathcal{F}}
\newcommand{\clg}{\mathcal{G}}
\newcommand{\clh}{\mathcal{H}}
\newcommand{\cls}{\mathcal{S}}
\newcommand{\clw}{\mathcal{W}}
\newcommand{\z}{\bm{z}}
\newcommand{\raro}{\rightarrow}
\begin{document}
\title[On product and partial isometry of Toeplitz operators]{On products and partial isometry of Toeplitz operators with operator-valued symbols}

\dedicatory{Dedicated to the memory of Dilip Kumar Guha, \\a loving and joyful grandfather.}

\author[Sarkar]{Srijan Sarkar}
\address{Department of Mathematics, Indian Institute of Technology Palakkad, Kerala - 678623, India. }
\email{srijans@iitpkd.ac.in,
srijansarkar@gmail.com}

\begin{abstract}
We solve the following problems associated with Toeplitz operators $T_{\Phi}$ on Hilbert space-valued Hardy spaces $H_{\mathcal{E}}^2(\mathbb{D}^n)$ over the unit polydisc $\D^n$.  \\
$(i)$~Given operator-valued bounded analytic functions $\Gamma, \Psi$ on $\mathbb{D}^n$, we completely characterize when the product $M_{\Gamma}M_{\Psi}^*$ becomes a Toeplitz operator by identifying tractable conditions on the functions. Furthermore, these conditions can be used to explicitly write the product into a sum of simple Toeplitz operators.\\
$(ii)$ We prove that partially isometric Toeplitz operators admit the following factorization:
\[
T_{\Phi} = M_{\Gamma} M_{\Psi}^*,
\]
where,  $\Gamma, \Psi$ are operator-valued inner functions on $\D^n$.  \\
A few of the immediate consequences are:\\
$(a)$~every partially isometric Toeplitz operator has a partially isometric symbol almost everywhere on $\mathbb{T}^n$ (distinguished boundary of $\mathbb{D}^n$). \\
$(b)$~any partially isometric analytic Toeplitz operator is of the form $M_{\Gamma V^*}$, where $\Gamma$ is an operator-valued inner function and $V$ is an constant isometry.\\
In connection with the result $(ii)$, we establish and use a crucial phenomenon: the range of partially isometric Toeplitz operators is always a Beurling-type invariant subspace of $H_{\mathcal{E}}^2(\mathbb{D}^n)$.  
Our results are new even in the case of Hardy spaces over the unit disc and extend the work of Brown--Douglas, Deepak--Pradhan--Sarkar on scalar-valued spaces. 
\end{abstract}

\subjclass[2010]{Primary 47B35, 46E40, 47A56, 30H10. Secondary 30J05.}

\keywords{Partial isometry, vector-valued Hardy spaces, Toeplitz operators, product of Toeplitz operators, Beurling type invariant subspaces}

\maketitle

\section{Introduction and main results}
From its inception, Toeplitz operators have played a vital role in the interplay between operator theory and function theory. This connection has been instrumental in finding new results across many disciplines, like several complex variables, non-commutative geometry, mathematical physics, and engineering sciences. Recently, there has been an active interest in extending the results of Toeplitz operators on scalar-valued Hardy spaces to vector-valued spaces. The primary reason has always been to find a deeper understanding between the operators and their corresponding symbols. Several authors have contributed to showing that analytic Toeplitz operators with operator-valued symbols have many applications in systems engineering and $H^{\infty}$ control theory; for instance, see the celebrated monographs \cite{BRG, FF}. Let us now briefly discuss the setting.

For any Hilbert space $\cle$, the $\cle$-valued (or vector-valued) Hardy space on the unit polydisc $\D^n$ in the $n$-dimensional complex plane is defined by
\[
H_{\cle}^2(\D^n) := \big \{\sum_{\bm{k} \in \Nat^n} a_{\bm{k}} z^{\bm{k}} \in \mathcal{O}(\D^n,\cle): a_{\bm{k}} \in \cle,  \sum_{\bm{k} \in \Nat^n} \|a_{\bm{k}}\|_{\cle}^2 < \infty \big \}.
\]
In the case of $\cle = \mathbb{C}$, the above collection contains simply the scalar-valued functions. The space of all bounded operators on any Hilbert space (say) $\cle$ is denoted by $\clb(\cle)$, and the collection of operator-valued bounded analytic functions on $\D^n$ is denoted by $H_{\clb(\cle)}^{\infty}(\D^n)$. It is well-known that $H_{\cle}^2(\D^n)$ is isometrically isomorphic to the Hardy space $H_{\cle}^2(\mathbb{T}^n)$ on the $n$-torus $\mathbb{T}^n$ (contained inside the $n$-dimensional complex plane), and this characterization is achieved using radial limits \cite{NF}. Using this identification, a Toeplitz operator is defined in the following manner \cite{Douglas}.
\begin{defn}
A bounded linear operator $T$ on $H_{\cle}^2(\D^n)$ is said to be Toeplitz if there exists an operator-valued function $\Phi \in L_{\clb(\cle)}^{\infty}(\mathbb{T}^n)$ such that $T = P_{H_{\cle}^2(\D^n)} L_{\Phi}|_{H_{\cle}^2(\D^n)},$ where $L_{\Phi}$ is the Laurent operator on $L_{\cle}^2(\mathbb{T}^n)$ (that is, the multiplication operator) associated to $\Phi.$ In this case, $T$ is denoted by $T_{\Phi}$, and the function $\Phi$ is called the symbol of the operator $T_{\Phi}.$
\end{defn}
It is immediate that when $\Phi$ is an operator-valued bounded analytic function on $\D^n$, the corresponding Toeplitz operator is simply the multiplication operator on $H_{\cle}^2(\D^n)$. For this reason, we make a distinction and use the following convention throughout this article.
\[
\text{If } \Phi \in H_{\clb(\cle)}^{\infty}(\D^n), \text{ then the Toeplitz operator is denoted by } M_{\Phi}.
\]
It is worth mentioning that unlike in the case of $H^2(\D)$, the theory of Toeplitz operators on both $H^2(\D^n)$ and $H_{\cle}^2(\D^n)$ (where $n>1$) is far from being complete, as many natural questions remain unanswered. Several challenges appear whenever we leave the realm of scalar-valued functions and delve into the world of operator-valued symbols.  We refer the reader to the exceptional monograph by Douglas for results on matrix-valued Toeplitz operators \cite{Douglas}. Recently, Curto, Hwang, and Lee have made some spectacular progress in this direction by studying Halmos's question of subnormal Toeplitz operators for matrix or operator-valued symbols \cite{CHL, CHL2, CHL3}.  

In this article, we further explore this theme: the correspondence of Toeplitz operators with their symbols under operator-theoretic conditions.  From the algebraic characterization of Toeplitz operators by Brown and Halmos \cite{BH}, it can be realized that the only possibility for any $T_{\phi} \in \clb(H^2(\D))$ to become an isometry (that is, $\|T_{\phi}f\| = \|f\|$ for all $f \in H^2(\D)$) is when its symbol $\phi$ is an inner function (that is, $\phi \in H^{\infty}(\D)$ and $|\phi(e^{it})|=1$ almost everywhere on $\mathbb{T}$). Brown and Douglas were interested in the behaviour of Toeplitz operators under the general condition of it being a partial isometry. We recall that a bounded operator $T$ on a Hilbert space $\clh$ is a partial isometry if $T$ is an isometry on the orthogonal complement of its kernel. In \cite{BD}, the authors proved that the only partially isometric Toeplitz operators on $H^2(\D)$ are of the form $T_{\phi} = M_{\theta}$, or else, $T_{\phi} = M_{\theta}^*$, where $\theta$ is an inner function on $\D$. Recently, Deepak--Pradhan--Sarkar extended this result for Toeplitz operators on $H^2(\D^n)$, where $n>1$.

\begin{thm*}{\cite[Theorem 1.1]{KPS}}
Let $\phi$ be a non-zero function in $L^{\infty}(\mathbb{T}^n)$. Then $T_{\phi}$ is a partial isometry if and only if there exist inner functions $\phi_1, \phi_2 \in H^{\infty}(\D^n)$ such that $\phi_1$ and $\phi_2$ depends on different variables and $T_{\phi} = M_{\phi_1}^* M_{\phi_2}$. 
\end{thm*}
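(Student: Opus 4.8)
The plan is to prove both implications; sufficiency is the easy half. If $T_\phi = M_{\phi_1}^* M_{\phi_2}$ with $\phi_1$ depending only on the coordinates indexed by a set $S_1$ and $\phi_2$ only on those indexed by $S_2$, and $S_1 \cap S_2 = \emptyset$, then under the factorization $H^2(\D^n) \cong H^2(\D^{S_1}) \ot H^2(\D^{S_2}) \ot H^2(\D^{S_0})$ (with $S_0$ the remaining coordinates) one has $M_{\phi_1} = M_{\phi_1} \ot I \ot I$ and $M_{\phi_2} = I \ot M_{\phi_2} \ot I$, so $T_\phi = M_{\phi_1}^* \ot M_{\phi_2} \ot I$; since $\phi_1,\phi_2$ are inner this is a partial isometry, and $M_{\phi_1}^* M_{\phi_2} = T_{\bar\phi_1} T_{\phi_2} = T_{\bar\phi_1 \phi_2}$ by the product rule ($\phi_2$ analytic), so $T_\phi$ is a partially isometric Toeplitz operator with symbol $\bar\phi_1\phi_2$.

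For the converse, assume $T_\phi \neq 0$ is a partial isometry, so $\|\phi\|_\infty = \|T_\phi\| = 1$; put $\clm := (\ker T_\phi)^\perp$ and $\clr := \ran T_\phi$ (which is closed). First I would use the Hankel identities $T_{|\phi|^2} = T_\phi^*T_\phi + H_\phi^*H_\phi = T_\phi T_\phi^* + H_{\bar\phi}^*H_{\bar\phi}$ together with $\|T_{|\phi|^2}\| \le 1$: for $f \in \clm$ the sandwich $\|f\|^2 \le \langle T_{|\phi|^2}f, f\rangle \le \|f\|^2$ forces $H_\phi f = 0$, so $M_\phi f = T_\phi f \in \clr$, and combined with the symmetric vanishing $H_{\bar\phi}|_\clr = 0$ this yields $|\phi|^2 f = T_{\bar\phi}(M_\phi f) = (T_\phi^*T_\phi)f = f$ a.e.; symmetrically on $\clr$. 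Hence $M_\phi|_\clm$ and $M_{\bar\phi}|_\clr$ are mutually inverse unitaries, so $\clr = M_\phi(\clm)$ and $\clm = M_{\bar\phi}(\clr)$. Next I would show $\clr$ is invariant under every $M_{z_i}$: for $g = M_\phi f \in \clr$, decompose $z_i f = f_1 \oplus f_0 \in \clm \oplus \ker T_\phi$; then $z_i g = \phi f_1 + \phi f_0$ with $z_i g, \phi f_1 \in H^2(\D^n)$ while $\phi f_0 \perp H^2(\D^n)$, so $\phi f_0 = 0$ and $z_i g = M_\phi f_1 \in \clr$; the same argument gives invariance of $\clm$.

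The crux is then to upgrade invariance to \emph{Beurling-type}: $\clm = \theta_1 H^2(\D^n)$ and $\clr = \theta_2 H^2(\D^n)$ for scalar inner functions $\theta_1,\theta_2$, which is a genuinely multivariable statement since for $n>1$ not every invariant subspace has this form. I would prove it via Mandrekar's criterion, verifying that $(M_{z_1}|_\clr,\dots,M_{z_n}|_\clr)$ doubly commutes by using the module isomorphism $M_\phi\colon\clm\to\clr$ and the symmetry $\clm = M_{\bar\phi}(\clr)$ to transfer the relevant defect computation between the two subspaces. Granting this, $M_\phi(\theta_1 h) = \phi\theta_1 h \in \theta_2 H^2(\D^n)$ for all $h$ shows $w := \bar\theta_2\phi\theta_1 \in H^\infty(\D^n)$, which isometry forces to be inner, and testing $\clm = M_{\bar\phi}(\clr)$ then forces $w$ to be a unimodular constant; absorbing it into $\theta_2$ leaves $\phi = \bar\theta_1\theta_2$ and $T_\phi = M_{\theta_1}^* M_{\theta_2}$.

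It remains to arrange disjoint variables. Dividing out the common inner factor $\delta$ (writing $\theta_i = \delta\theta_i'$) gives $M_{\theta_1}^* M_{\theta_2} = M_{\theta_1'}^*(M_\delta^* M_\delta)M_{\theta_2'} = M_{\theta_1'}^* M_{\theta_2'}$ with $\theta_1',\theta_2'$ coprime, and the partial isometry hypothesis becomes: $M_{\theta_2'}^* P_{\theta_1'H^2(\D^n)}M_{\theta_2'}$ is a projection, i.e.\ the orthogonal projections onto $\theta_1'H^2(\D^n)$ and $\theta_2'H^2(\D^n)$ commute. The hardest step --- which I expect to be the real obstacle of the whole argument --- is the purely function-theoretic assertion that two coprime inner functions on $\D^n$ whose Beurling projections commute must depend on disjoint sets of variables; I would prove this by induction on $n$, restricting to coordinate slices and showing that commutation of the projections forces the model spaces $H^2(\D^n)\ominus\theta_i'H^2(\D^n)$ to split as tensor products aligned with one partition of the $n$ variables. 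Taking $\phi_1 = \theta_1'$ and $\phi_2 = \theta_2'$ finishes the proof; apart from the two obstacles above, everything is routine manipulation with the Hankel identities and the Brown--Halmos product rule.
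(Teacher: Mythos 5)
Your outline gets the easy parts right (sufficiency; the Hankel--sandwich argument showing $|\phi|=1$ a.e.\ and $\clr=\phi\,\clm$, $\clm=\bar\phi\,\clr$; shift-invariance of $\clm$ and $\clr$; the observation that partial isometry of $M_{\theta_1}^*M_{\theta_2}$ is equivalent to commutation of the projections onto $\theta_1H^2(\D^n)$ and $\theta_2H^2(\D^n)$), but both steps you yourself flag as the crux are genuine gaps, not deferrable routine work. For the Beurling-type step, the mechanism you propose cannot succeed as stated: since $|\phi|=1$ a.e., $M_\phi\colon\clm\to\clr$ is a unitary that intertwines the restricted shift tuples, so ``transferring the defect computation'' only shows that $\clm$ is doubly commuting \emph{if and only if} $\clr$ is --- it gives no reason why either one satisfies Mandrekar's criterion, and for $n\ge 2$ a generic shift-invariant subspace does not. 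This is exactly where the paper does its heavy lifting (Theorem \ref{doubcom}): starting from the Brown--Halmos relation $M_{z_i}^*T_\Phi M_{z_i}=T_\Phi$ and partial isometry, one derives the identities (\ref{partial_1})--(\ref{partial_2}), introduces the operators $C_i=P_{\ker T_\Phi^*}M_{z_i}|_{\ker T_\Phi^*}$, proves $(I-C_i^*C_i)(I-C_j^*C_j)=0$, and only then concludes $T_\Phi^*M_{z_j}P_{\ker T_\Phi^*}M_{z_i}^*T_\Phi=0$, i.e.\ double commutation of the range. None of this is replaced by your unitary-equivalence remark.

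The second gap is the closing function-theoretic claim. Your reduction first needs a greatest common inner divisor of $\theta_1,\theta_2$ on $\D^n$, which you do not justify and which is delicate for $n>1$; and the assertion that coprime inner functions with commuting Beurling projections depend on disjoint variables is left as a one-line induction plan, although it is essentially of the same depth as the theorem itself (it is the subject of \cite{DPS}, whose proof \emph{uses} \cite[Theorem 1.1]{KPS}, so filling this in along your route risks circularity). Note also that in your order $M_{\theta_1}^*M_{\theta_2}=T_{\bar\theta_1}T_{\theta_2}=T_{\bar\theta_1\theta_2}$ the ``product is Toeplitz'' condition is vacuous, so no product criterion is available to you there. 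The paper sidesteps all of this: it produces the factorization in the opposite order, $T_\phi=M_\zeta M_\psi^*$ with $\zeta,\psi$ the (Beurling) generators of $\ran T_\phi$ and $\ran T_\phi^*$ (via the constant unitary $X$ as in Theorem \ref{mainthm1}), and then Theorem \ref{comcondn} --- the Fourier-coefficient criterion $a_{\bm{l}+e_i}\bar b_{\bm{m}+e_i}=0$ for $M_\zeta M_\psi^*$ to be Toeplitz --- forces $\zeta$ and $\psi$ to depend on disjoint variables outright, with no coprimality or gcd argument. So while your global strategy (Beurling ranges, factorization, disjointness) parallels the paper's, the two decisive steps are missing, and the substitute arguments you sketch for them would not close the proof.
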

In other words, the condition of partial isometry on $T_{\phi}$ forces a factorization $\phi = \bar{\phi_1} \phi_2$ almost everywhere (denoted by a.e.) on $\mathbb{T}^n$. An immediate question is finding the corresponding result for Toeplitz operators with operator-valued symbols, hoping that a satisfying answer will explain the reasoning behind the surprising factorization of the symbol $\phi$. If we expect a straightforward generalization, then there lies an immediate obstacle. In scalar-valued cases, both Brown--Douglas and Deepak--Pradhan--Sarkar's results show that if we start with $\phi \in H^{\infty}(\D^n)$, then the only possibility for the analytic Toeplitz operator $M_{\phi}$ to be a partial isometry is when $M_{\phi}$ is an isometry, in other words, $\phi$ must be an inner function. Many examples involving block Toeplitz operators show that this no longer holds in the setting of operator-valued symbols. First, let us note that an operator-valued function $\Theta \in H_{\clb(\clf,\cle)}^{\infty}(\D^n)$ is said to be an inner function if $\Theta(\bm{z}): \clf \raro \cle$ is an isometry a.e. on $\mathbb{T}^n$ \cite{NF}. Let us now consider the following bounded analytic matrix-valued symbol,
\[
\Theta(z) := \begin{bmatrix}
0 &z\\
0 &0
\end{bmatrix} \in H_{\clb(\mathbb{C}^2)}^{\infty}(\D),
\]
then for each $\lambda \in \mathbb{T}$,
\[
\Theta(\lambda)^* \Theta(\lambda) = \begin{bmatrix}
0 &0\\
\bar{\lambda} &0
\end{bmatrix}  \begin{bmatrix}
0 &\lambda\\
0 &0
\end{bmatrix} = \begin{bmatrix}
0 &0\\
0 &1
\end{bmatrix}.
\]
This implies that $\Theta(z)$ is not an inner function. However, 
\[
M_{\Theta}^* M_{\Theta} = T_{\Theta^* \Theta} = \begin{bmatrix}
0 &0\\
0 & I_{H^2(\D)}
\end{bmatrix} \in H_{\mathbb{C}^2}^2(\D),
\]
shows that $M_{\Theta}$ is indeed a partial isometry. Thus, a non-constant analytic Toeplitz operator $M_{\Theta}$ can be a partial isometry without $\Theta(z)$ being an inner function.  Motivated by this observation and earlier results, we study the following intriguing question in this article: \textit{What are the partially isometric Toeplitz operators on vector-valued Hardy spaces?}

Now, let us digress a little to highlight two important directions associated with this question. The first is in the study of invariant subspaces of $M_z \oplus M_z^*$ on $H_{\cle}^2(\D) \oplus H_{\clf}^2(\D)$ as observed by Gu and Luo in \cite{GL}. This work itself was motivated by the characterization of invariant subspaces of $M_z \oplus M_z^*$ on $H^2(\D) \oplus H^2(\D)$ by Timotin \cite{Timotin}. In their work, Gu and Luo observed that the invariant subspace of $M_z \oplus M_z^*$ is related to the range of the following operator.
\[
V_{\Phi} = \begin{bmatrix}
T_A & T_B\\
H_C & H_D
\end{bmatrix}: H_{\cle}^2(\D) \oplus H_{\clf}^2(\D) \raro H_{\cle}^2(\D) \oplus H_{\clf}^2(\D),
\]
where 
\[
\Phi(z) = \begin{bmatrix}
A(z) & B(z)\\ 
C(z) & D(z)
\end{bmatrix}: \cle \oplus \clf  \raro \cle \oplus \clf \quad (z \in \D),
\]
and $T_A, T_B$, and $H_C, H_D$ are the corresponding Toeplitz and Hankel operators, respectively.  Furthermore, the authors found that the range of $V_{\Phi}$ is closed in $H_{\cle}^2(\D) \oplus H_{\clf}^2(\D)$ if and only if $V_{\Phi}$ is a partial isometry. This led to the natural question of when $V_{\Phi}$ becomes a partial isometry. In this regard, they made the following statement.

``The above problem seems a difficult one since the partial isometric characterizations
of Toeplitz operator $T_A$ (A is not necessarily analytic) and Hankel operator $H_D$ are only known when $A$ and $D$ are scalar functions in $L^{\infty}$ \cite{BD, Peller}." In this article, we completely resolve this issue for Toeplitz operators; see Theorem \ref{mainthm1} and Theorem \ref{main_analytic}. 

The second connection lies in studying Beurling-type invariant subspaces of Hardy spaces over the unit polydisc. In a recent article \cite{DPS},  Debnath et al. used the characterization \cite[Theorem 1.1]{KPS} to give a complete answer to the following question: given non-constant inner functions $\theta_1, \theta_2$ on $\D^n$, when does the orthogonal projections onto invariant subspaces $\theta_1 H^2(\D^n)$ and $\theta_2 H^2(\D^n)$ commute with each other? This resulted in a definite answer to the following question raised by R.G. Douglas:  given non-constant inner functions $\theta_1, \theta_2$ on $\D^n$, when does the product of projections onto the orthogonal subspaces $(\theta_1 H^2(\D^n))^{\perp}$ and $(\theta_2 H^2(\D^n))^{\perp}$ become a finite-rank projection?

Thus, it is evident that our main result will serve as an essential component for\\
$(i)$~the characterization of invariant subspaces of sums of shift operators on vector-valued Hardy spaces over both domains $\D$ and $\D^n$;\\
$(ii)$~answering Douglas's question in the setting of vector-valued Hardy spaces by studying commuting projections onto Beurling-type shift-invariant subspaces of $H_{\cle}^2(\D^n)$.

Since the proofs in \cite{BH, KPS} rely crucially on the commutativity of the symbols, the methods used in these works are not at all applicable for answering the above questions. Thus, it is essential to find a completely new approach to the above question.  Based on earlier works, we can hope that a partially isometric $T_{\Phi}$ may admit a factorization into the product of Toeplitz operators corresponding to inner symbols.  For this purpose, let us briefly elucidate how multiplication operators corresponding to inner functions appear in the context of Hardy spaces. Recall that a closed $(M_{z_1},\ldots, M_{z_n})$-joint invariant subspace of $H_{\cle}^2(\D^n)$ is said to be \textit{Beurling-type} if there exists a Hilbert space $\clf$, and an inner function $\Theta \in H_{\clb(\clf,\cle)}^{\infty}(\D^n)$ such that $\cls = M_{\Theta} H_{\clf}^2(\D^n)$. It is well-known that given a subspace $\cls \subseteq H_{\cle}^2(\D^n)$ if we consider the restriction operators $R_i: = M_{z_i}|_{\cls}$ for all $i \in \{1,\ldots,n\}$, then $\cls$ is Beurling-type invariant subspace if and only if the cross-commutators $[R_i^*, R_j] = 0$ for all distinct $i, j \in \{1,\ldots,n\}$ (\cite{Mandrekar, SSW}). Thus, if we can show that the range of Toeplitz operators satisfies the above algebraic conditions concerning the restriction operators,  then we can associate inner functions to the corresponding symbol. In section \ref{Beurling}, we establish this connection by proving the following result.
\begin{thm}\label{doubcom}
If $T_{\Phi}$ is a non-zero partially isometric Toeplitz operator on $H_{\cle}^2(\D^n)$, then $\ran T_{\Phi}$ is a Beurling-type invariant subspace of $H_{\cle}^2(\D^n)$.
\end{thm}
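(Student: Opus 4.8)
The plan is to verify the two defining features of a Beurling-type subspace for $\cls := \ran T_{\Phi}$: joint invariance under $(M_{z_1},\dots,M_{z_n})$, and vanishing of the cross-commutators $[R_i^*,R_j]$ for $i\neq j$, where $R_i := M_{z_i}|_{\cls}$ (this is the criterion of \cite{Mandrekar, SSW} recalled above). Two standard facts will be used repeatedly: a bounded operator $T$ on $H_{\cle}^2(\D^n)$ is Toeplitz if and only if $M_{z_i}^* T M_{z_i} = T$ for every $i$; and, since $T_{\Phi}$ is a partial isometry, so is $T_{\Phi}^* = T_{\Phi^*}$, with $P := T_{\Phi}T_{\Phi}^*$ the orthogonal projection onto the closed subspace $\ran T_{\Phi}$ and $T_{\Phi}^*T_{\Phi}$ the projection onto $(\Ker T_{\Phi})^{\perp} = \ran T_{\Phi}^*$.

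First I would establish invariance. Since $M_{z_i}M_{z_i}^* \le I$ and conjugation preserves operator order, $T_{\Phi}M_{z_i}M_{z_i}^*T_{\Phi}^* \le T_{\Phi}T_{\Phi}^* = P$; conjugating by $M_{z_i}$ and invoking the Toeplitz identity $M_{z_i}^*T_{\Phi}M_{z_i} = T_{\Phi}$ gives
\[
M_{z_i}^* P M_{z_i} \;\ge\; (M_{z_i}^*T_{\Phi}M_{z_i})(M_{z_i}^*T_{\Phi}^*M_{z_i}) \;=\; T_{\Phi}T_{\Phi}^* \;=\; P .
\]
For $f\in\ran P$ this forces $\|PM_{z_i}f\|^2 = \langle M_{z_i}^*PM_{z_i}f,f\rangle \ge \|f\|^2 = \|M_{z_i}f\|^2 \ge \|PM_{z_i}f\|^2$, so $M_{z_i}f = PM_{z_i}f\in\ran P$; hence $\ran T_{\Phi}$, and likewise $\ran T_{\Phi}^*$, is joint invariant. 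Equality in the last chain on $\ran P$ also shows that the positive operator $M_{z_i}^*PM_{z_i}-P = (E_iT_{\Phi}^*M_{z_i})^*(E_iT_{\Phi}^*M_{z_i})$, with $E_i := I - M_{z_i}M_{z_i}^*$, annihilates $\ran T_{\Phi}$, so $E_iT_{\Phi}^*M_{z_i}f = 0$ for $f\in\ran T_{\Phi}$; combined with $M_{z_i}^*T_{\Phi}^*M_{z_i} = T_{\Phi}^*$ this yields the intertwining relations $T_{\Phi}^*M_{z_i}f = M_{z_i}T_{\Phi}^*f$ for $f\in\ran T_{\Phi}$ and (applying this to $T_{\Phi}^*$) $T_{\Phi}M_{z_i}g = M_{z_i}T_{\Phi}g$ for $g\in\ran T_{\Phi}^*$; equivalently $(I-P)M_{z_i}^*T_{\Phi} = M_{z_i}^*T_{\Phi}E_i$.

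For the cross-commutators I would first reduce to $n=2$: relabelling, take the pair to be $z_1,z_2$ and group the remaining $n-2$ variables into the coefficient Hilbert space $\clk := H_{\cle}^2(\D^{n-2})$; then $T_{\Phi}$ is a Toeplitz operator on $H_{\clk}^2(\D^2)$ with symbol in $L_{\clb(\clk)}^{\infty}(\mathbb{T}^2)$, the shifts $M_{z_1},M_{z_2}$ are unchanged, and so is the condition $[R_1^*,R_2]=0$. Assuming now $n=2$, invariance gives $[P,M_{z_2}] = PM_{z_2}(I-P)$, so for $f\in\ran T_{\Phi}$, writing $u := T_{\Phi}^*f\in\ran T_{\Phi}^*$ and using $(I-P)M_{z_1}^*f = M_{z_1}^*T_{\Phi}E_1u$,
\[
[R_1^*,R_2]f \;=\; PM_{z_2}(I-P)M_{z_1}^*f \;=\; PM_{z_2}M_{z_1}^*T_{\Phi}E_1u .
\]
It remains to show this is $0$, equivalently that $(I-P)M_{z_1}^*\ran T_{\Phi}\perp (I-P)M_{z_2}^*\ran T_{\Phi}$. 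I would push this expression through the two intertwining relations (for $T_{\Phi}$ on $\ran T_{\Phi}^*$ and for $T_{\Phi}^*$ on $\ran T_{\Phi}$) together with $M_{z_1}^*M_{z_2} = M_{z_2}M_{z_1}^*$, keeping track of the $z_1$- and $z_2$-gradings of $H_{\cle}^2(\D^2)$ so that the two subspaces land in complementary summands; concretely, one can instead represent the $M_{z_1}$-invariant subspace $\ran T_{\Phi}$ via Beurling--Lax--Halmos in the $z_1$-variable and use its $M_{z_2}$-invariance to propagate it to an inner function of both variables. This last step is the crux and the main obstacle, since the straightforward rewritings are circular --- they return the original expression, mirroring the $T_{\Phi}\leftrightarrow T_{\Phi}^*$ symmetry --- so the argument must genuinely use that $\ran T_{\Phi}$ and $\ran T_{\Phi}^*$ sit inside one and the same space $H_{\cle}^2(\D^2)$, coupled by $T_{\Phi}$ and the shifts. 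Once $[R_i^*,R_j]=0$ is established, the cited criterion finishes the proof.
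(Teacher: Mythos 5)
Your proposal is correct, and essentially parallel to the paper, up to the point where you stop: the joint invariance of $\ran T_{\Phi}$ (the paper's Proposition \ref{shift-invariant}), the intertwining relations $T_{\Phi}^*M_{z_i}T_{\Phi}=M_{z_i}T_{\Phi}^*T_{\Phi}$ and $T_{\Phi}M_{z_i}T_{\Phi}^*=M_{z_i}T_{\Phi}T_{\Phi}^*$ (identities (\ref{partial_1})--(\ref{partial_2})), and the reduction of $[R_i^*,R_j]=0$ to a single vanishing statement are all there; note that your orthogonality condition $(I-P)M_{z_1}^*\ran T_{\Phi}\perp(I-P)M_{z_2}^*\ran T_{\Phi}$ is exactly the identity $T_{\Phi}^*M_{z_1}P_{\ker T_{\Phi}^*}M_{z_2}^*T_{\Phi}=0$ that the paper also isolates (there for $\ran T_{\Phi}^*$; the two reductions coincide). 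But this identity is the heart of the theorem, and you leave it unproved, as you yourself acknowledge. The fallback you sketch --- Beurling--Lax--Halmos in the $z_1$-variable, then ``propagating'' via $M_{z_2}$-invariance --- is circular: every joint invariant subspace of $H_{\cle}^2(\D^2)$, Beurling-type or not, admits a one-variable Beurling representation and is $M_{z_2}$-invariant, so no such soft argument can single out the doubly commuting ones; the partial isometry hypothesis must be used again at precisely this point.

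For comparison, here is how the paper closes the gap. From (\ref{partial_1})--(\ref{partial_2}) one first derives, for distinct $i,j$, the identity $M_{z_i}^*T_{\Phi}T_{\Phi}^*M_{z_i}\,M_{z_j}^*T_{\Phi}T_{\Phi}^*M_{z_j}=T_{\Phi}T_{\Phi}^*$ (identity (\ref{partial3})). Then one introduces the contractions $C_i:=P_{\ker T_{\Phi}^*}M_{z_i}|_{\ker T_{\Phi}^*}$ and computes the defect
\[
I_{\ker T_{\Phi}^*}-C_i^*C_i \;=\; P_{\ker T_{\Phi}^*}M_{z_i}^*T_{\Phi}T_{\Phi}^*M_{z_i}P_{\ker T_{\Phi}^*}
\;=\;\bigl(T_{\Phi}^*M_{z_i}P_{\ker T_{\Phi}^*}\bigr)^*\bigl(T_{\Phi}^*M_{z_i}P_{\ker T_{\Phi}^*}\bigr),
\]
so that $T_{\Phi}^*M_{z_i}P_{\ker T_{\Phi}^*}=Y_i\,(I_{\ker T_{\Phi}^*}-C_i^*C_i)^{1/2}$ for an isometry $Y_i$ (polar-type factorization). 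A direct computation using (\ref{partial_1}), (\ref{partial_2}) and (\ref{partial3}) shows $(I_{\ker T_{\Phi}^*}-C_i^*C_i)(I_{\ker T_{\Phi}^*}-C_j^*C_j)=0$; since both factors are nonnegative, their square roots also multiply to zero, and therefore
\[
T_{\Phi}^*M_{z_j}P_{\ker T_{\Phi}^*}M_{z_i}^*T_{\Phi}
= Y_j\,(I_{\ker T_{\Phi}^*}-C_j^*C_j)^{1/2}(I_{\ker T_{\Phi}^*}-C_i^*C_i)^{1/2}\,Y_i^* = 0,
\]
which is exactly the missing vanishing statement. Some device of this kind --- transferring the problem to the defect operators of the compressions of the shifts to $\ker T_{\Phi}^*$ and exploiting positivity --- is what your proposal still needs; without it the proof is incomplete at its crucial step.
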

Let us note that classically, the kernel of the adjoint of Toeplitz operators on $H^2(\D)$ is related to \textit{nearly} invariant subspaces of $H^2(\D)$. This observation was noted by Hayashi in \cite{Hayashi} and further explored by Sarason in \cite{Sarason}.  We believe that the above result should serve as an impetus to study the range of Toeplitz operators as well.  

With the above characterization in mind, the author could anticipate that a partially isometric Toeplitz operator $T_{\Phi}$ may admit the following factorization.
\[
T_{\Phi} = M_{\Gamma} M_{\Psi}^*,
\]
where $\Gamma, \Psi$ are operator-valued inner functions on $\D^n$, but this led to another challenge: \textit{under what conditions does $M_{\Gamma} M_{\Psi}^*$ become a Toeplitz operator?}

On the scalar-valued Hardy spaces, it is well-known when the product of two Toeplitz operators is again a Toeplitz operator. Brown and Halmos developed this result for $H^2(\D)$ in  \cite{BH}, and Gu proved the corresponding result for $H^2(\D^n)$ in \cite{Gu2}. However, the answer to this question for Toeplitz operators with operator-valued symbols is still unclear.  We refer the reader towards some important progress in the case of block Toeplitz operators by Gu and Zheng in \cite{GZ, Gu}.  This is where we make another significant contribution. Section \ref{product} is dedicated to establishing tractable conditions that solve the above product problem for not only inner but also bounded analytic functions.  Before stating the result, let us set the following notation: for any $k \in \{1,\ldots,n\}$, the functions $\Gamma_k(\bm{z}), \Psi_k(\bm{z})$ on $\D^n$ denote the functions $\Gamma$ and $\Psi$, respectively, with $0$ in the $k$-th coordinate.

\begin{thm}\label{prod_mains}
Let $\clf, \cle$ be Hilbert spaces and $\Gamma(\bm{z}), \Psi(\bm{z})$ be $\clb(\clf,\cle)$-valued bounded analytic functions on $\D^n$. Then $M_{\Gamma}M_{\Psi}^*$ is a Toeplitz operator on $H_{\cle}^2(\D^n)$ if and only if 
\[
\big( \Gamma(\bm{\lambda}) - \Gamma_k(\bm{\lambda}) \big) \big( \Psi(\bm{\mu}) - \Psi_k(\bm{\mu})\big)^* = 0,
\]
for all $\bm{\lambda}, \bm{\mu} \in \D^n$, and $k \in \{1,\ldots,n\}$. 

\end{thm}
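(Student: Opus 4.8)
The plan is to express everything in terms of Fourier coefficients on $\mathbb{T}^n$ and use the classical characterization of Toeplitz operators, namely that a bounded operator $T$ on $H_{\cle}^2(\D^n)$ is Toeplitz if and only if $M_{z_k}^* T M_{z_k} = T$ for every $k \in \{1,\dots,n\}$ (the operator-valued, polydisc analogue of the Brown--Halmos identity). So the whole theorem becomes: $M_{\Gamma} M_{\Psi}^*$ satisfies $M_{z_k}^* \, M_{\Gamma} M_{\Psi}^* \, M_{z_k} = M_{\Gamma} M_{\Psi}^*$ for all $k$ if and only if the stated product of ``differences'' vanishes. For a single variable $z_k$, note $M_{z_k}^* M_{\Gamma} M_{z_k} = M_{\Gamma}$ exactly when $\Gamma$ does not involve $z_k$ in its constant-in-$z_k$ part getting killed — more precisely one computes $M_{\Gamma} - M_{z_k} M_{z_k}^* M_{\Gamma} M_{z_k} M_{z_k}^* $ type expressions; the cleanest route is to write $M_{z_k}^* M_{\Gamma} M_{\Psi}^* M_{z_k}$ by inserting $I = M_{z_k} M_{z_k}^* + (I - M_{z_k}M_{z_k}^*)$ and tracking the defect projection $I - M_{z_k} M_{z_k}^*$, which is projection onto functions whose expansion has no $z_k$, i.e. onto $H^2$ of the remaining $n-1$ variables tensored appropriately.

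Concretely, I would first record the identity
\[
M_{\Gamma} - M_{z_k}^* M_{\Gamma} M_{z_k} = M_{\Gamma} - M_{\Gamma_k} - (\text{something involving } M_{z_k}),
\]
and more usefully, introduce $P_k := I - M_{z_k} M_{z_k}^*$ and observe $M_{z_k}^* M_{\Gamma} M_{\Psi}^* M_{z_k} - M_{\Gamma} M_{\Psi}^* = -\,M_{z_k}^* M_{\Gamma} P_k M_{\Psi}^* M_{z_k}$ after the cross terms with $P_k$ on the $\Gamma$-side vanish because $P_k M_{z_k} = 0$ and $M_{z_k}^* P_k = 0$. Wait — one has to be careful: $M_{\Gamma}$ and $M_{z_k}$ commute (both are analytic multipliers), so $M_{z_k}^* M_{\Gamma} M_{\Psi}^* M_{z_k} = M_{z_k}^* M_{\Gamma} M_{z_k} M_{z_k}^* M_{\Psi}^* M_{z_k} + M_{z_k}^* M_{\Gamma} P_k M_{\Psi}^* M_{z_k}$, and since $M_{\Gamma} M_{z_k} = M_{z_k} M_{\Gamma}$ the first term is $M_{\Gamma} M_{z_k}^* M_{z_k} M_{z_k}^* M_{\Psi}^* M_{z_k} = M_{\Gamma} M_{z_k}^* M_{\Psi}^* M_{z_k}$, and similarly $M_{z_k}^* M_{\Psi}^* M_{z_k} = M_{\Psi}^* - P_k M_{\Psi}^* M_{z_k}$ — hmm, the adjoint side is not analytic, so I should instead write $M_{z_k}^* M_{\Psi}^* M_{z_k} = (M_{z_k}^* M_{\Psi} M_{z_k})^*$ and use analyticity of $\Psi$. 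Carrying this bookkeeping through, the Brown--Halmos defect $D_k := M_{z_k}^* M_{\Gamma} M_{\Psi}^* M_{z_k} - M_{\Gamma} M_{\Psi}^*$ collapses to a single ``rank-in-the-$k$-th-slot'' term, and I will show $D_k = -\,(M_{\Gamma} - M_{\Gamma_k}) P_k (M_{\Psi} - M_{\Psi_k})^*$, or something of exactly this shape, where $M_{\Gamma} - M_{\Gamma_k}$ is multiplication by the part of $\Gamma$ genuinely depending on $z_k$.

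From there the argument is linear-algebraic. The operator $M_{\Gamma} M_{\Psi}^*$ is Toeplitz iff $D_k = 0$ for all $k$. Since $P_k$ is an orthogonal projection and $(M_{\Gamma} - M_{\Gamma_k}) P_k = (M_{\Gamma} - M_{\Gamma_k})$ on $\ran P_k$ while $(M_{\Psi} - M_{\Psi_k})^*$ maps into $\ran P_k$ (the part of $\Psi$ depending on $z_k$ has no $z_k$-free component after adjoining... this needs the precise statement), the vanishing $D_k = 0$ is equivalent to $(M_{\Gamma} - M_{\Gamma_k})(M_{\Psi} - M_{\Psi_k})^* = 0$ as operators, which by evaluating against reproducing kernels $k_{\bm{\lambda}} \ot e$, $k_{\bm{\mu}} \ot f$ of $H_{\cle}^2(\D^n)$ is equivalent to the pointwise identity $(\Gamma(\bm{\lambda}) - \Gamma_k(\bm{\lambda}))(\Psi(\bm{\mu}) - \Psi_k(\bm{\mu}))^* = 0$ for all $\bm{\lambda}, \bm{\mu} \in \D^n$. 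I would spell out this kernel computation: $M_{\Psi}^*(k_{\bm{\mu}} \ot f) = k_{\bm{\mu}} \ot \Psi(\bm{\mu})^* f$ and likewise for $\Gamma$, so $\langle (M_{\Gamma}-M_{\Gamma_k})(M_{\Psi}-M_{\Psi_k})^*(k_{\bm{\mu}}\ot f), k_{\bm{\lambda}} \ot e\rangle = \overline{k_{\bm{\lambda}}(\bm{\mu})}\,\langle (\Gamma(\bm{\lambda})-\Gamma_k(\bm{\lambda}))(\Psi(\bm{\mu})-\Psi_k(\bm{\mu}))^* f, e\rangle$ — wait, $k_{\bm\lambda}(\bm\mu) = \prod (1-\bar\mu_j \lambda_j)^{-1}$ which is never zero on $\D^n$, so it may be cancelled, yielding the claim.

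The main obstacle I anticipate is the first step: pinning down the exact algebraic identity for the defect $D_k$ and, in particular, verifying that the cross-terms that are \emph{not} of the displayed ``difference $\times$ difference'' form genuinely cancel. This requires carefully commuting analytic multipliers past $M_{z_k}$ and $M_{z_k}^*$, handling the non-analytic adjoint factor $M_{\Psi}^*$ by passing to adjoints of analytic expressions, and correctly identifying $P_k = I - M_{z_k}M_{z_k}^*$ with the projection onto the ``$z_k$-free'' subspace so that $M_{z_k} M_{z_k}^* M_{\Gamma} = M_{\Gamma_k} + (\text{terms})$ in the right way; equivalently, decomposing $\Gamma = \Gamma_k + z_k \widetilde{\Gamma}$ (power series split in the $k$-th variable) and likewise for $\Psi$, and grinding the four resulting products. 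Once the identity $D_k = -(M_{\Gamma}-M_{\Gamma_k})P_k(M_{\Psi}-M_{\Psi_k})^*$ is in hand, the equivalence with the pointwise condition is routine via reproducing kernels, as sketched above; I would also remark that the necessity direction gives, as promised in the abstract, the explicit decomposition $M_{\Gamma} M_{\Psi}^* = \sum_k (\text{simple Toeplitz pieces})$ by re-expanding the telescoping sum $M_{\Gamma} M_{\Psi}^* = M_{\Gamma_1\cdots} \cdots$ obtained from iterating $\Gamma = \Gamma_k + z_k\widetilde\Gamma$.
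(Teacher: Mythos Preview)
Your overall strategy matches the paper's: use the Brown--Halmos identity $M_{z_k}^* T M_{z_k}=T$, compute the defect $D_k = M_{z_k}^* M_\Gamma M_\Psi^* M_{z_k} - M_\Gamma M_\Psi^*$, and translate vanishing of the defect into the pointwise condition via reproducing kernels. The kernel computation you give at the end is exactly what the paper uses (their Lemma~\ref{rule2}), and your converse direction, once the operator identity $(M_\Gamma-M_{\Gamma_k})(M_\Psi-M_{\Psi_k})^*=0$ is in hand, is correct.

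There is, however, a genuine gap in the forward direction. Writing $\Gamma=\Gamma_k+z_k\widetilde\Gamma$ and $\Psi=\Psi_k+z_k\widetilde\Psi$, the defect works out to $D_k = M_{\widetilde\Gamma}\,P_k\,M_{\widetilde\Psi}^*$, whereas $(M_\Gamma-M_{\Gamma_k})(M_\Psi-M_{\Psi_k})^* = M_{z_k} M_{\widetilde\Gamma} M_{\widetilde\Psi}^* M_{z_k}^*$; these are \emph{not} equal, so your displayed identity for $D_k$ is off. More importantly, your bridge from $D_k=0$ to the product vanishing rests on the claim that $(M_\Psi-M_{\Psi_k})^*$ maps into $\ran P_k=\ker M_{z_k}^*$. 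This is false: for instance $(M_\Psi-M_{\Psi_k})^*(z_k^2\,\eta)=M_{\widetilde\Psi}^*(z_k\eta)$ generally has a nonzero $z_k$-component whenever $\widetilde\Psi(\bm 0)\neq 0$. So the step ``$D_k=0 \Leftrightarrow (M_\Gamma-M_{\Gamma_k})(M_\Psi-M_{\Psi_k})^*=0$'' is not justified by your argument.

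The paper closes this gap differently. From $D_k=M_{z_k}^* M_\Gamma P_{\ker M_{z_k}^*} M_\Psi^* M_{z_k}=0$ it first proves (Lemma~\ref{toepcondn2}) the apparently stronger statement $M_{z_k}^* M_\Gamma P_{\cle} M_\Psi^* M_{z_k}=0$, then reads off the Fourier-coefficient condition $A_{\bm l+e_k}B_{\bm m+e_k}^*=0$ (Proposition~\ref{main2}), which is exactly the Taylor expansion of the pointwise identity. Equivalently, and closer to your outline, one can argue: $D_k=0$ gives $M_{\widetilde\Gamma} M_{\widetilde\Psi}^* = M_{\widetilde\Gamma}(M_{z_k}M_{z_k}^*+P_k)M_{\widetilde\Psi}^* = M_{z_k}\bigl(M_{\widetilde\Gamma} M_{\widetilde\Psi}^*\bigr)M_{z_k}^*$, and iterating yields $M_{\widetilde\Gamma} M_{\widetilde\Psi}^* = M_{z_k}^m\bigl(M_{\widetilde\Gamma} M_{\widetilde\Psi}^*\bigr)M_{z_k}^{*m}\to 0$ in SOT since $M_{z_k}$ is a pure shift. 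This purity/iteration step is the missing idea in your sketch; once you insert it, your route is essentially the paper's, just bypassing the explicit coefficient bookkeeping.
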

These conditions further arise from an equivalent set of necessary and sufficient conditions on the Fourier coefficients of the symbols $\Gamma$ and $\Psi$ (see Proposition \ref{main2}). Surprisingly, these conditions also serve as an algorithm to write the product $M_{\Gamma} M_{\Psi}^*$ into sums of elementary Toeplitz operators,  as can be seen in the following case when $n=1$.
\begin{thm}\label{simple}
Let $\clf, \cle$ be Hilbert spaces and $\Gamma(z), \Psi(z)$ be $\clb(\clf,\cle)$-valued bounded analytic functions on $\D$. Then the following are equivalent.\\ \vspace{1mm}
$(i)$~$M_{\Gamma}M_{\Psi}^*$ is a Toeplitz operator on $H_{\cle}^2(\D)$,\\ \vspace{1mm}
$(ii)$~$\big( \Gamma(\lambda) - \Gamma(0) \big) \big( \Psi(\mu) - \Psi(0) \big)^* = 0$ for all $\lambda, \mu \in \D$,\\ \vspace{1mm}
$(iii)$~$M_{\Gamma}M_{\Psi}^*$ admits the following decomposition
\[
M_{\Gamma}M_{\Psi}^*  = M_{\Gamma} \Psi(0)^* + \Gamma(0) M_{\Psi}^* - \Gamma(0) \Psi(0)^*.
\]
\end{thm}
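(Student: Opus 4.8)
The plan is to establish the cycle $(ii)\Rightarrow(iii)\Rightarrow(i)\Rightarrow(ii)$. The step $(iii)\Rightarrow(i)$ is immediate, since the right-hand side of the displayed identity is manifestly a sum of Toeplitz operators: the analytic multiplication operator with $\clb(\cle)$-valued symbol $z\mapsto\Gamma(z)\Psi(0)^*$, the co-analytic Toeplitz operator $\Gamma(0)M_{\Psi}^*=T_{\Gamma(0)\Psi(\cdot)^*}$, and the constant Toeplitz operator $T_{\Gamma(0)\Psi(0)^*}$; hence $M_{\Gamma}M_{\Psi}^*=T_{\Phi}$ with $\Phi(z)=\Gamma(z)\Psi(0)^*+\Gamma(0)\Psi(z)^*-\Gamma(0)\Psi(0)^*$. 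For $(i)\Leftrightarrow(ii)$ I would note that this is just the $n=1$ instance of Theorem~\ref{prod_mains} (where $\Gamma_k=\Gamma(0)$ and $\Psi_k=\Psi(0)$), but it is worth recording the short self-contained argument. By the Brown--Halmos criterion \cite{BH}, a bounded $T$ on $H_{\cle}^2(\D)$ is Toeplitz if and only if $M_z^*TM_z=T$. Writing $P$ for the orthogonal projection of $H_{\cle}^2(\D)$ onto its ``degree zero'' copy of $\cle$, so that $M_zM_z^*=I-P$, and using $M_z^*M_z=I$ together with $M_{\Gamma}M_z=M_zM_{\Gamma}$ and $M_z^*M_{\Psi}^*=M_{\Psi}^*M_z^*$, one inserts $I=M_zM_z^*+P$ after $M_{\Gamma}$ to obtain
\[
M_z^*\,M_{\Gamma}M_{\Psi}^*\,M_z \;=\; M_{\Gamma}M_{\Psi}^* \;+\; M_z^*\,M_{\Gamma}\,P\,M_{\Psi}^*\,M_z .
\]
Thus $M_{\Gamma}M_{\Psi}^*$ is Toeplitz precisely when $M_z^*M_{\Gamma}PM_{\Psi}^*M_z=0$.

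Writing the Taylor expansions $\Gamma(z)=\sum_{j\geq0}A_jz^j$ and $\Psi(z)=\sum_{j\geq0}B_jz^j$, one computes $M_z^*M_{\Gamma}PM_{\Psi}^*M_z(e\,z^m)=\sum_{i\geq0}A_{i+1}B_{m+1}^*\,e\,z^i$ for $e\in\cle$, $m\geq0$, so the vanishing above is equivalent to $A_jB_k^*=0$ for all $j,k\geq1$. On the other hand $(\Gamma(\lambda)-\Gamma(0))(\Psi(\mu)-\Psi(0))^*=\sum_{j,k\geq1}A_jB_k^*\lambda^j\bar\mu^k$, and uniqueness of power series (differentiating at $0$ in $\lambda$ and in $\bar\mu$) shows $(ii)$ is equivalent to the very same relations $A_jB_k^*=0$ for $j,k\geq1$; this gives $(i)\Leftrightarrow(ii)$.

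It remains to deduce $(iii)$ from $(ii)$, equivalently from the relations $A_jB_k^*=0$ ($j,k\geq1$). Here the plan is to compare block matrices relative to the decomposition $H_{\cle}^2(\D)=\bigoplus_{l\geq0}\cle z^l$. From $M_{\Psi}^*(e\,z^m)=\sum_{p=0}^{m}B_{m-p}^*e\,z^p$ and the analogous formula for $M_{\Gamma}^*$, the $(l,m)$ block entry of $M_{\Gamma}M_{\Psi}^*$ is $\sum_{p=0}^{\min(l,m)}A_{l-p}B_{m-p}^*$; the relations force every term with $l-p\geq1$ and $m-p\geq1$ to vanish, so only $p=\min(l,m)$ survives, leaving $A_{l-m}\Psi(0)^*$ when $l>m$, $\Gamma(0)B_{m-l}^*$ when $l<m$, and $\Gamma(0)\Psi(0)^*$ when $l=m$. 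Computing the $(l,m)$ block entries of $M_{\Gamma}\Psi(0)^*+\Gamma(0)M_{\Psi}^*-\Gamma(0)\Psi(0)^*$ in the same fashion and matching the three cases yields the identity in $(iii)$.

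I expect the only delicate points to be the two passages from an operator equation to a coefficient-level statement --- the ``only if'' direction of $(i)\Leftrightarrow(ii)$, cleanly handled by the Brown--Halmos characterization, and the bookkeeping that collapses the double sum $\sum_{p}A_{l-p}B_{m-p}^*$ under the orthogonality relations. In one variable both are routine; the substantive work is the several-variable version (Theorem~\ref{prod_mains}, Proposition~\ref{main2}), where one must track the vanishing coordinate by coordinate, and Theorem~\ref{simple} is then essentially its specialization.
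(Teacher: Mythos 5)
Your proposal is correct, and the ingredients for $(i)\Leftrightarrow(ii)$ are the same as the paper's: your insertion of $I=M_zM_z^*+P$ after $M_\Gamma$ is exactly the one-variable case of Lemma \ref{toepcondn1} (Lemma \ref{toepcondn2} is vacuous when $n=1$), and the coefficient relations $A_jB_k^*=0$ for $j,k\ge 1$ are Proposition \ref{main2} specialized to $n=1$; your power-series identification of these relations with $(ii)$ is the same computation that closes the proof of Theorem \ref{prod_mains}. Where you genuinely diverge is the passage to the decomposition $(iii)$: the paper invokes Lemma \ref{rule2} (totality of the kernel functions $k_\mu\eta$) to upgrade the pointwise condition $(ii)$ to the single operator identity $\big(M_\Gamma-\Gamma(0)\big)\big(M_\Psi-\Psi(0)\big)^*=0$, which upon expansion \emph{is} the identity in $(iii)$ and gives $(ii)\Leftrightarrow(iii)$ in one line, whereas you verify $(ii)\Rightarrow(iii)$ by comparing the $(l,m)$ block entries $\sum_{p=0}^{\min(l,m)}A_{l-p}B_{m-p}^*$ of $M_\Gamma M_\Psi^*$ with those of $M_\Gamma\Psi(0)^*+\Gamma(0)M_\Psi^*-\Gamma(0)\Psi(0)^*$, and then close the cycle with $(iii)\Rightarrow(i)$. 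Your matrix-entry argument is correct (the orthogonality relations do collapse the sum to the $p=\min(l,m)$ term, and the three cases $l>m$, $l<m$, $l=m$ match), and it has the virtue of staying entirely at the Fourier-coefficient level with no appeal to reproducing kernels; the paper's route is shorter and scales better, since the same kernel-function lemma drives the much longer decompositions for $n=2,3$ (Theorems \ref{bidisc} and \ref{tridisc}), where an explicit block-entry bookkeeping would be unwieldy. As a minor remark, your equivalence is stated as the cycle $(ii)\Rightarrow(iii)\Rightarrow(i)\Rightarrow(ii)$, while the paper proves the two equivalences $(i)\Leftrightarrow(ii)$ and $(ii)\Leftrightarrow(iii)$; both organizations are complete.
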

The terms on the right-hand side of the above identity are analytic, co-analytic and constant Toeplitz operators, clearly showing how $M_{\Gamma} M_{\Psi}^*$ becomes a Toeplitz operator. An immediate consequence is the following: \textsf{when $M_{\Gamma} M_{\Psi}^*$ is a Toeplitz operator, $\Gamma(0)=0$ or $\Psi(0)=0$, implies that $M_{\Gamma} M_{\Psi}^*$ is analytic or co-analytic, respectively}.  Also, see Remark \ref{interes} for a function-theoretic consequence of the above characterization.

The decomposition of $M_{\Gamma}M_{\Psi}^*$ into the sum of elementary Toeplitz operators are challenging to write down for $n>1$ cases, and the difficulty increases with increasing $n$.  In this article, we have meticulously written down the decompositions in the cases of $n=2,3$ (Theorem \ref{bidisc} and Theorem \ref{tridisc}, respectively) and also highlighted in Remark \ref{orem} the method to find the decomposition when $n>3$.  In section \ref{partial}, we use Theorem \ref{prod_mains} to give a complete characterization for partially isometric Toeplitz operators on $H_{\cle}^2(\D^n)$.
\begin{thm}\label{mainthm1}
A non-constant Toeplitz operator $T_{\Phi}$ on $H_{\cle}^2(\D^n)$ is a partial isometry if and only if there exists a Hilbert space $\clf$,  and inner functions $\Gamma(\bm{z}), \Psi(\bm{z}) \in H_{\clb(\clf, \cle)}^{\infty}(\D^n)$ such that 
\[
T_{\Phi} = M_{\Gamma} M_{\Psi}^*,
\]
where $\Gamma(\bm{z})$ and $\Psi(\bm{z})$ satisfy
\[
\big( \Gamma(\bm{\lambda}) - \Gamma_k(\bm{\lambda}) \big) \big( \Psi(\bm{\mu}) - \Psi_k(\bm{\mu})\big)^* = 0,
\]
for all $\bm{\lambda}, \bm{\mu} \in \D^n$, and $k \in \{1,\ldots,n\}$
\end{thm}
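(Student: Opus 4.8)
The plan is to prove both implications, the hard one (necessity) resting on Theorem~\ref{doubcom} and Theorem~\ref{prod_mains}. For sufficiency, suppose $T_{\Phi} = M_{\Gamma} M_{\Psi}^*$ with $\Gamma,\Psi \in H_{\clb(\clf,\cle)}^{\infty}(\D^n)$ inner. Then $M_{\Psi}$ is an isometry, so $M_{\Psi}^*$ vanishes on $(\ran M_{\Psi})^{\perp}$ and maps $\ran M_{\Psi}$ isometrically onto $H_{\clf}^2(\D^n)$; composing with the isometry $M_{\Gamma}$, the operator $M_{\Gamma}M_{\Psi}^*$ is isometric on $\ran M_{\Psi}$ and zero on its orthogonal complement, hence a partial isometry with initial space $\ran M_{\Psi}$. (The product condition plays no role here; it only guarantees, via Theorem~\ref{prod_mains}, that $M_{\Gamma}M_{\Psi}^*$ is genuinely a Toeplitz operator.)

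For necessity, let $T_{\Phi}$ be a non-constant partially isometric Toeplitz operator; then $T_{\Phi}\neq 0$, and $T_{\Phi}^* = T_{\Phi^*}$ is also a non-zero partially isometric Toeplitz operator. Since partial isometries have closed range, Theorem~\ref{doubcom} applied to $T_{\Phi}$ and to $T_{\Phi}^*$ gives Beurling-type representations $\ran T_{\Phi} = M_{\Gamma}H_{\clf}^2(\D^n)$ and $(\ker T_{\Phi})^{\perp}=\ran T_{\Phi}^* = M_{\Psi}H_{\clg}^2(\D^n)$ with $\Gamma\in H_{\clb(\clf,\cle)}^{\infty}(\D^n)$, $\Psi\in H_{\clb(\clg,\cle)}^{\infty}(\D^n)$ inner. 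Consequently the two defect projections satisfy $T_{\Phi}T_{\Phi}^* = M_{\Gamma}M_{\Gamma}^*$ and $T_{\Phi}^*T_{\Phi} = M_{\Psi}M_{\Psi}^*$. I would then set $B := M_{\Gamma}^* T_{\Phi} M_{\Psi}\colon H_{\clg}^2(\D^n)\to H_{\clf}^2(\D^n)$; a direct computation using $T_{\Phi}T_{\Phi}^*T_{\Phi}=T_{\Phi}$ and that the above operators are projections gives $B^*B = I$ and $BB^* = I$, so $B$ is unitary, and moreover $M_{\Gamma}B = T_{\Phi}M_{\Psi}$ and $M_{\Gamma}BM_{\Psi}^* = T_{\Phi}$.

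The crux is to show $B$ is a \emph{constant} unitary, equivalently that it intertwines the coordinate shifts. The key observation I would isolate is that a Toeplitz operator, though not a multiplier in general, commutes with each $M_{z_i}$ on $(\ker T_{\Phi})^{\perp}$: the Brown--Halmos relation $M_{z_i}^* T_{\Phi} M_{z_i} = T_{\Phi}$ rewrites as $T_{\Phi}M_{z_i} = M_{z_i}T_{\Phi} + Q_i T_{\Phi}M_{z_i}$ with $Q_i = I - M_{z_i}M_{z_i}^*$ the projection onto $\ker M_{z_i}^*$, and for $h\in(\ker T_{\Phi})^{\perp}$ the two summands lie in the orthogonal subspaces $\ran M_{z_i}$ and $\ker M_{z_i}^*$, so
\[
\|T_{\Phi}M_{z_i}h\|^2 = \|M_{z_i}T_{\Phi}h\|^2 + \|Q_i T_{\Phi}M_{z_i}h\|^2 .
\]
Since $(\ker T_{\Phi})^{\perp}$ is Beurling-type it is $M_{z_i}$-invariant, so $M_{z_i}h\in(\ker T_{\Phi})^{\perp}$; as $T_{\Phi}$ is isometric there and $M_{z_i}$ is an isometry, the left side equals $\|h\|^2$ and the first right-hand term equals $\|T_{\Phi}h\|^2 = \|h\|^2$, forcing $Q_i T_{\Phi}M_{z_i}h = 0$. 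Feeding this into $M_{\Gamma}B = T_{\Phi}M_{\Psi}$ and using that $M_{\Gamma},M_{\Psi}$ commute with $M_{z_i}$ yields $M_{\Gamma}BM_{z_i}g = M_{\Gamma}M_{z_i}Bg$ for all $g$, hence $BM_{z_i} = M_{z_i}B$ for every $i$ by injectivity of $M_{\Gamma}$. A unitary commuting with all coordinate shifts is multiplication by a constant unitary $V\in\clb(\clg,\clf)$ (both $B$ and $B^{-1}=B^*$ are module maps, hence multipliers, and a co-analytic multiplication operator that equals an analytic one is constant). Therefore $T_{\Phi} = M_{\Gamma}BM_{\Psi}^* = M_{\Gamma V}M_{\Psi}^*$ with $\Gamma V,\Psi$ inner in $H_{\clb(\clg,\cle)}^{\infty}(\D^n)$; taking the Hilbert space of the theorem to be $\clg$, we have $T_{\Phi} = M_{\Gamma}M_{\Psi}^*$ with $\Gamma,\Psi$ inner, and since $T_{\Phi}$ is Toeplitz, Theorem~\ref{prod_mains} delivers $\big(\Gamma(\bm{\lambda})-\Gamma_k(\bm{\lambda})\big)\big(\Psi(\bm{\mu})-\Psi_k(\bm{\mu})\big)^* = 0$.

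The main obstacle is exactly the constant-unitary step. The operator $T_{\Phi}$ is not a multiplier, and it is not clear a priori that conjugating by $M_{\Gamma}^*$ and $M_{\Psi}$ erases all non-analytic behaviour; the partial-isometry hypothesis must be used sharply -- through the Pythagorean identity above -- to annihilate the defect term $Q_i T_{\Phi}M_{z_i}$ precisely on $(\ker T_{\Phi})^{\perp}$. This is where the argument departs fundamentally from the scalar approaches of \cite{BH, KPS}, which rely on commutativity of symbols. A secondary, routine point is the bookkeeping of the coefficient Hilbert spaces $\clf,\clg$ and the identification of constant unitaries with unitary multipliers on $H^2(\D^n)$.
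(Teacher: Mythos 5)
Your proof is correct, and its skeleton coincides with the paper's: apply Theorem \ref{doubcom} to $T_{\Phi}$ and $T_{\Phi}^*$ to get $\ran T_{\Phi}=M_{\Gamma}H^2_{\clf}(\D^n)$ and $\ran T_{\Phi}^*=M_{\Psi}H^2_{\clg}(\D^n)$, form the connecting operator $B=M_{\Gamma}^*T_{\Phi}M_{\Psi}$ (the paper's $X$), show it is a \emph{constant} unitary, and then invoke Theorem \ref{prod_mains} for the product condition; the converse is the same one-line computation $T_{\Phi}T_{\Phi}^*=M_{\Gamma}M_{\Gamma}^*$. The only genuine divergence is how constancy of the connecting unitary is obtained. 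The paper first checks the Brown--Halmos relation $M_{z_i}^*XM_{z_i}=X$ for $X$ and then combines it with $XX^*=X^*X=I$ and the defect identity $I-M_{z_i}M_{z_i}^*=P_{\ker M_{z_i}^*}$ to force $XM_{z_i}=M_{z_i}X$ and $X^*M_{z_i}=M_{z_i}X^*$. You instead prove, via the Pythagorean identity coming from $T_{\Phi}M_{z_i}=M_{z_i}T_{\Phi}+P_{\ker M_{z_i}^*}T_{\Phi}M_{z_i}$ and the shift-invariance of $\ran T_{\Phi}^*$, that $T_{\Phi}$ intertwines each $M_{z_i}$ on its initial space, and then transfer this through $M_{\Gamma}B=T_{\Phi}M_{\Psi}$ to get $BM_{z_i}=M_{z_i}B$ directly, obtaining the adjoint relation from invertibility; your intertwining fact is essentially identity (\ref{partial_1}) of the paper, re-derived by a norm argument rather than algebraically. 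Your route is slightly more geometric and isolates a reusable statement (a partially isometric Toeplitz operator commutes with the shifts on its initial space), while the paper's stays within the same algebraic manipulations it uses elsewhere; also, you absorb the constant unitary into $\Gamma$ rather than into $\Psi$ as the paper does, which is immaterial. Minor cosmetic point: like the paper, you assert without proof that a unitary commuting with all $M_{z_i}$ together with its adjoint is a constant, which is standard and acceptable at this level of detail.
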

For elaborate statements in the cases of $n=1,2,3$, we refer to Theorem \ref{psimple}, Theorem \ref{pbidisc},  and Theorem \ref{ptridisc}, respectively. As expected, there are numerous consequences of this result. For instance, a partially isometric $T_{\Phi}$ will always have a partially isometric symbol, a.e., on $\mathbb{T}^n$ (Corollary \ref{symb}). At the end of section \ref{partial}, we use the above result to characterize partially isometric Toeplitz operators that have analytic symbols (Theorem \ref{main_analytic}), that are hyponormal (Theorem \ref{hyponormal}), and normal (Corollary \ref{normal}).  In section \ref{abstract}, we use our main result to characterize general Toeplitz operators on abstract Hilbert spaces.

In section \ref{scalar}, we give a completely new proof of the main results obtained by Deepak et al. in \cite[Theorem 1.1]{KPS} and Brown et al in \cite{BD}. \textsf{This new approach shows that, in general, the condition of partial isometry on a Toeplitz operator $T_{\Phi}$ induces a factorization of the symbol $\Phi$ into the product $\Gamma \Psi^*$ where $\Gamma$ and $\Psi$ are inner functions jointly satisfying certain function-theoretic conditions. In scalar cases, these conditions manifest into the factorization of $\phi$ into the product of functions depending on disjoint variables (see Theorem \ref{comcondn}).}

Let us now describe the plan for the rest of this article. In section \ref{prelim}, we set notations, definitions, and establish a few results essential for the later part. We end this article with section \ref{questions}, where we have highlighted several interesting questions worthy of further investigation.

\section{Preliminaries}\label{prelim}
In this section, we set the notations, definitions, and results needed in this article.  Let us begin by looking at the following equivalent way of defining $\cle$-valued Hardy spaces on $\D^n$.
{\small 
\[
H_{\cle}^2(\D^n): = \{f \in \mathcal{O}(\D^n, \cle): \|f\|_2^2: = \sup_{0<r<1} \int_{\mathbb{T}^n} \|f(re^{i \theta_1}, \ldots, re^{i \theta_n})\|_{\cle}^2 d\mu < \infty\},
\]
}
where $\mu$ is the normalized Lebesgue measure on $\mathbb{T}^n$. This space of $\cle$-valued analytic functions has a natural collection of shift operators, namely,
\[
M_{z_i} f := z_i f, \quad (f \in H_{\cle}^2(\D^n)),
\]
for each $i \in \{1,\ldots,n\}$. We will denote by $M_{\bm{z}} = (M_{z_1},\ldots, M_{z_n})$ as the tuple of shift operators. For $\bm{k} = (k_1,\ldots,k_n) \in \Nat^n$, we set the following convention,
\[
M_{\bm{z}}^{\bm{k}} = M_{z_1}^{k_1} \cdots M_{z_n}^{k_n},
\]
and for any $\bm{k} = (k_1,\ldots,k_n), \bm{l} = (l_1, \ldots, l_n) \in \Nat^n$, we say $\bm{k} \leq  \bm{l}$ if $k_i \leq l_i$ for all $i \in \{1,\ldots,n\}$. For the tuple of shift operators $L_{e^{i\theta}} = (L_{e^{i\theta_1}}, \ldots, L_{e^{i\theta_n}})$ on  $L^2(\mathbb{T}^n)$, we set
\[
L_{e^{i\theta}}^{\bm{k}} = L_{e^{i \theta_1}}^{k_1} \cdots L_{e^{i \theta_n}}^{k_n}.
\]
Brown and Halmos gave an algebraic characterization for Toeplitz operators on $H^2(\D)$ \cite{BH}. It is well-known that there exists a natural extension of this result to Toeplitz operators on $H^2(\D^n)$ \cite{MSS}. The following result shows that such a characterization holds for $H_{\cle}^2(\D^n)$ as well. The result can be proved verbatim from \cite[Theorem 3.1, Theorem 5.2]{MSS}. For the sake of completeness, we give a sketch of the proof. 
\begin{thm}\label{Toep_char}
A bounded operator $T$ on $H_{\cle}^2(\D^n)$ is a Toeplitz operator if and only if 
\[
M_{z_i}^* T M_{z_i} = T,
\]
for all $i \in \{1,\ldots,n\}$. 
\end{thm}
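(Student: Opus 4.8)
The plan is to prove both implications by exploiting the action of the shift operators and the structure of Laurent operators on $L^2_{\cle}(\mathbb{T}^n)$. For the forward direction, suppose $T = T_{\Phi} = P_{H^2_{\cle}(\D^n)} L_{\Phi}|_{H^2_{\cle}(\D^n)}$ for some $\Phi \in L^{\infty}_{\clb(\cle)}(\mathbb{T}^n)$. Using the identification $H^2_{\cle}(\D^n) \cong H^2_{\cle}(\mathbb{T}^n)$, I would write $M_{z_i} = P_{H^2_{\cle}} L_{e^{i\theta_i}}|_{H^2_{\cle}}$, and observe that $L_{e^{i\theta_i}}$ commutes with $L_{\Phi}$ since they are both Laurent (multiplication) operators. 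The key facts to record are that $L_{e^{i\theta_i}}$ maps $H^2_{\cle}(\mathbb{T}^n)$ into itself (so $L_{e^{i\theta_i}}|_{H^2_{\cle}} = M_{z_i}$) and, dually, that $L_{e^{i\theta_i}}^* = L_{e^{-i\theta_i}}$ satisfies $P_{H^2_{\cle}} L_{e^{-i\theta_i}} P_{H^2_{\cle}} = M_{z_i}^* P_{H^2_{\cle}}$ together with $L_{e^{i\theta_i}} (I - P_{H^2_{\cle}}) = (I - P_{H^2_{\cle}}) L_{e^{i\theta_i}} (I - P_{H^2_{\cle}})$ — i.e. the orthogonal complement of $H^2_{\cle}$ is invariant under $L_{e^{i\theta_i}}^*$, or equivalently that $L_{e^{i\theta_i}}$ applied to $H^2_{\cle}$ stays inside $H^2_{\cle}$. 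Then
\[
M_{z_i}^* T_{\Phi} M_{z_i} = P_{H^2_{\cle}} L_{e^{-i\theta_i}} P_{H^2_{\cle}} L_{\Phi} P_{H^2_{\cle}} L_{e^{i\theta_i}}|_{H^2_{\cle}},
\]
and the middle projection can be absorbed because $L_{e^{i\theta_i}}$ preserves $H^2_{\cle}$, giving $P_{H^2_{\cle}} L_{e^{-i\theta_i}} L_{\Phi} L_{e^{i\theta_i}}|_{H^2_{\cle}} = P_{H^2_{\cle}} L_{\Phi}|_{H^2_{\cle}} = T_{\Phi}$, using commutativity of the Laurent operators and $L_{e^{-i\theta_i}} L_{e^{i\theta_i}} = I$.

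For the converse, suppose $T$ is a bounded operator on $H^2_{\cle}(\D^n)$ satisfying $M_{z_i}^* T M_{z_i} = T$ for all $i$. The strategy is to build a candidate symbol $\Phi$ from the "matrix entries" of $T$ and then verify $T = T_{\Phi}$. Concretely, I would first iterate the relation: $M_{\bm z}^{*\bm k} T M_{\bm z}^{\bm k} = T$ for all $\bm k \in \Nat^n$. From the hypothesis one also extracts $M_{z_i}^* T M_{z_i} = T$ which, combined with boundedness, lets one define a sequence of operators recording the "off-diagonal" behaviour; the natural move is to set, for each $\bm m \in \Int^n$, a bounded operator on $\cle$ (or more precisely a weak limit / compression) capturing $\langle T \bm{z}^{\bm k} x, \bm{z}^{\bm l} y\rangle$ and to check this depends only on $\bm k - \bm l$. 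This "Toeplitzness of the matrix" is exactly what the shift-invariance relation $M_{z_i}^* T M_{z_i} = T$ encodes coordinate by coordinate. Having produced a formal Fourier series $\Phi \sim \sum_{\bm m \in \Int^n} \widehat{\Phi}(\bm m) e^{i \bm m \cdot \theta}$ with operator coefficients $\widehat{\Phi}(\bm m) \in \clb(\cle)$, I would show $\Phi$ defines a bounded Laurent operator $L_{\Phi}$ on $L^2_{\cle}(\mathbb{T}^n)$ with $\|L_{\Phi}\| = \|T\|$ (the bound is where one must be careful: one uses that $L_{e^{i\theta}}^{\bm k} T L_{e^{i\theta}}^{-\bm k}$, suitably interpreted, converges to $L_{\Phi}$ in the strong operator topology as $\bm k \to \infty$ componentwise, so $\|L_{\Phi}\| \le \|T\|$), and conclude $\Phi \in L^{\infty}_{\clb(\cle)}(\mathbb{T}^n)$. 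Finally, compressing back, $P_{H^2_{\cle}} L_{\Phi}|_{H^2_{\cle}} = T$ by construction of the Fourier coefficients, so $T = T_{\Phi}$.

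The main obstacle I anticipate is the converse direction, specifically the passage from the purely algebraic relation $M_{z_i}^* T M_{z_i} = T$ to an honest $L^{\infty}$ symbol with the correct norm. In the scalar one-variable case (Brown--Halmos) this is handled by the classical argument that the strong limit of $L_{z}^{-k} T L_{z}^{k}$ exists and is a bounded Laurent operator; the subtleties here are (a) doing the multivariable bookkeeping with the net indexed by $\Nat^n$ and ensuring the limit is independent of how the coordinates go to infinity, and (b) in the operator-valued setting, ensuring measurability and essential boundedness of the symbol $\Phi: \mathbb{T}^n \to \clb(\cle)$ rather than merely a formal Fourier series — this requires invoking that a formal series whose dilations are uniformly bounded on $L^2_{\cle}$ does arise from an $L^{\infty}_{\clb(\cle)}$ function. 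Since the excerpt explicitly says "The result can be proved verbatim from \cite[Theorem 3.1, Theorem 5.2]{MSS}" and only a sketch is promised, I would present the forward direction in full as above and, for the converse, present the definition of the dilated operators $L_{e^{i\theta}}^{-\bm k}\, T\, L_{e^{i\theta}}^{\bm k}$ (extended to act on $L^2_{\cle}$ by the shift-invariance), note their uniform boundedness by $\|T\|$, assert convergence in the strong operator topology to the desired Laurent operator $L_{\Phi}$, and observe that compressing recovers $T$ — deferring the detailed measurability and norm estimates to the cited reference.
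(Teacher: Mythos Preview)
Your proposal is correct and follows essentially the same approach as the paper: dilate $T$ by the Laurent shifts and pass to a limit that is forced to be a Laurent operator. The paper's execution differs only in minor technicalities---it uses the diagonal sequence $\bm{k}_d=(k,\ldots,k)$ to sidestep the $\Nat^n$-net issue you flag, takes a weak (bilinear-form) limit of $A_k = L_{e^{i\theta}}^{*\bm{k}_d} T P_{H^2_{\cle}} L_{e^{i\theta}}^{\bm{k}_d}$ rather than a strong one, and concludes $A_\infty = L_\Phi$ by checking that $A_\infty$ commutes with each $L_{e^{i\theta_j}}$, which absorbs the measurability concern into a standard fact about the commutant of the Laurent shifts.
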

\begin{proof}
For each $k \in \Nat$, consider $\bm{k}_d := (k,\ldots,k) \in \Nat^n$.  From the assumption $M_{z_i}^* T M_{z_i} = T$, we immediately get
\[
M_{\bm{z}}^{* \bm{k}_d} T M_{\bm{z}}^{ \bm{k}_d} = T.
\]
This implies that for any $\eta, \zeta \in \cle$,
\[
\langle T e_{\bm{i} + \bm{k}_d} \eta, e_{\bm{i} + \bm{k}_d} \zeta\rangle = \langle T M_{\bm{z}}^{\bm{k}_d}e_{\bm{i}} \eta,  M_{\bm{z}}^{\bm{k}_d} e_{\bm{j}} \zeta\rangle  = \langle T e_{\bm{i}}, e_{\bm{j}} \rangle,
\]
for all $\bm{i}, \bm{j} \in \Nat^n$, and $k \in \Nat$.  Now, for each $\bm{l}, \bm{m} \in \Int^n$ there exists $\bm{t} = (t_1,\ldots,t_n) \in \Nat^n$ such that $\bm{l}+\bm{k}_d, \bm{m}+\bm{k}_d \in \Nat^n$ for all $\bm{k}_d \geq \bm{t}$. Using this observation, if we set $A_k := L_{e^{i\theta}}^{* \bm{k}_d} T P_{H_{\cle}^2(\D^n)} L_{e^{i\theta}}^{\bm{k}_d}$ for $k \in \Nat \setminus \{0\}$, then we get
\[
\langle A_k e_{\bm{l}}, e_{\bm{m}} \rangle_{L_{\cle}^2(\mathbb{T}^n)} = \langle T P_{H_{\cle}^2(\D^n)} e_{\bm{l}+\bm{k}_d}, e_{\bm{m}+\bm{k}_d} \rangle_{L_{\cle}^2(\mathbb{T}^n)},
\]
and therefore, for all $\bm{k}_d \geq \bm{t}$
\[
\langle A_k e_{\bm{l}}, e_{\bm{m}} \rangle \raro \langle T e_{\bm{l}+t}, e_{\bm{m}+t} \rangle \text{ as } k \raro \infty.
\]
Hence, we can define a bounded bilinear form $\textsf{b}(\cdot,\cdot)$ on the linear span of $\{e_{\bm{s}} \zeta: \bm{s} \in \Int^n, \zeta \in \cle\}$, in the following manner,
\[
\textsf{b}(e_{\bm{l}}\eta, e_{\bm{m}}\zeta) = \lim_{k \raro \infty} \langle A_k e_{\bm{l}}\eta, e_{\bm{m}}\zeta \rangle,
\]
for all $\bm{l}, \bm{m} \in \Int^n$ and $\eta, \zeta \in \cle$. Therefore, there exists an operator $A_{\infty} \in \clb(L_{\cle}^2(\mathbb{T}^n))$ such that 
\[
\langle A_{\infty} f,g \rangle = \lim_{k \raro \infty} \langle A_k f,g \rangle,
\]
for $f,g \in L_{\cle}^2(\mathbb{T}^n)$. Let $\epsilon_j = (0,\ldots, 0,\underset{\text{j-th position}}{1},0, \ldots,0)$. Then for all $k$ sufficiently large (depending on $\bm{l}, \bm{m}$ and $j$), we get
\begin{align*}
&\langle L_{e^{i \theta}}^{* \bm{k}_d} T P_{H_{\cle}^2(\D^n)} L_{e^{i \theta}}^{\bm{k}_d} e_{\bm{l} + \epsilon_j}, e_{\bm{m} + \epsilon_j} \rangle_{L_{\cle}^2(\mathbb{T}^n)}  \\
&= \langle T P_{H_{\cle}^2(\D^n)}  e_{\bm{l} + \bm{k}_d + \epsilon_j}, e_{\bm{m} + \bm{k}_d + \epsilon_j} \rangle_{L_{\cle}^2(\mathbb{T}^n)} \\
&= \langle A_{k} e_{\bm{l}}, e_{\bm{m}} \rangle_{L_{\cle}^2(\mathbb{T}^n)}.
\end{align*}
This immediately gives
\begin{align*}
\langle A_{\infty} e_{\bm{l} + \epsilon_j}, e_{\bm{m} + \epsilon_j} \rangle_{L_{\cle}^2(\mathbb{T}^n)} &= \lim_{k \raro \infty} \langle L_{e^{i \theta}}^{* \bm{k}_d} T P_{H_{\cle}^2(\D^n)} L_{e^{i \theta}}^{\bm{k}_d} e_{\bm{l} + \epsilon_j}, e_{\bm{m} + \epsilon_j} \rangle_{L_{\cle}^2(\mathbb{T}^n)} \\
&= \langle A_{\infty} e_{\bm{l}}, e_{\bm{m} } \rangle_{L_{\cle}^2(\mathbb{T}^n)}.
\end{align*}
This implies that
\[
A_{\infty}L_{e^{i \theta_j}} = L_{e^{i \theta_j}} A_{\infty},
\]
for all $j=1,\ldots,n$. Hence, there exists $\Phi(z) \in L_{\clb(\cle)}^{\infty}(\mathbb{T}^n)$ such that
\[
A_{\infty} = L_{\Phi},
\]
which further implies that $T = P_{H_{\cle}^2(\D^n)} L_{\Phi}|_{H_{\cle}^2(\D^n)}$. 

Conversely, if we begin with $T = P_{H_{\cle}^2(\D^n)} L_{\Phi}|_{H_{\cle}^2(\D^n)}$ for some $\Phi(z) \in L_{\clb(\cle)}^{\infty}(\mathbb{T}^n)$, then for any $f,g \in H_{\cle}^2(\D^n)$ and $j=1,\ldots,n$ we get
\[
\langle M_{z_j}^* TM_{z_j}f,g \rangle_{H_{\cle}^2(\D^n)} = \langle \Phi e^{i \theta_j}f, e^{i \theta_j}g \rangle_{L_{\cle}^2(\mathbb{T}^n)} = \langle \Phi f, g \rangle_{L_{\cle}^2(\mathbb{T}^n)}.
\]
In other words, 
\[
\langle M_{z_j}^* TM_{z_j}f,g \rangle_{H_{\cle}^2(\D^n)} = \langle P_{H_{\cle}^2(\D^n)} L_{\Phi}|_{H_{\cle}^2(\D^n)}f,g \rangle_{H_{\cle}^2(\D^n)},
\]
which implies that $M_{z_j}^* T M_{z_j} = T$ for all $j=1,\ldots,n$. This completes the proof.
\end{proof}
This property plays an important role in developing many results on Toeplitz operators and will be used throughout this article.  Before moving into some of its consequences, let us briefly recall some concepts on partial isometries.
\begin{defn}
$T \in \clb(\clh)$ is a partial isometry if $T$ is an isometry on the orthogonal complement of the subspace $\{h \in \clh: \|Th\|=0\}$ (also referred to as kernel of $T$).
\end{defn}
It is well-known that $T$ is a partial isometry on $\clh$ if and only if $T$ satisfies any one of the following equivalent conditions.\\
$(i)$~$TT^*$ is a projection.\\
$(ii)$~$TT^*T = T$.\\
From this description, the following useful facts can be deduced.\\
$(i)$~$T$ is a partial isometry if and only if $T^*$ is also a partial isometry.\\
$(ii)$~The range of a partially isometric operator must be a closed subspace of $\clh$.\\
Most importantly, in this case, we have the following identity
\begin{equation}\label{pi}
I_{\clh} - TT^* = P_{\ker T^*}.
\end{equation}
where $P_{\ker T^*}$ is the orthogonal projection onto the kernel of $T^*$. Throughout this article, we shall use this identity, especially in the case of shift operators. We refer the readers to some fundamental results on partial isometries by Halmos, McLaughlin, and Wallen in \cite{HM, HW}.

\begin{thm}
Let $T_{\Phi}$ be a Toeplitz operator on $H_{\cle}^2(\D^n)$. Then $T_{\Phi}$ is an isometry if and only if $\Phi$ is an inner function in $H_{\clb(\cle)}^{\infty}(\D^n)$.
\end{thm}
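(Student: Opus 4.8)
The plan is to prove the two implications separately, the ``if'' direction being immediate from the conventions already set up. Suppose $\Phi \in H_{\clb(\cle)}^{\infty}(\D^n)$ is inner, i.e.\ $\Phi(\bm{z})^*\Phi(\bm{z}) = I_{\cle}$ for a.e.\ $\bm{z} \in \mathbb{T}^n$. Since $\Phi$ is analytic, $T_{\Phi} = M_{\Phi}$, and for all $f, g \in H_{\cle}^2(\D^n)$,
\[
\langle M_{\Phi}^* M_{\Phi} f, g\rangle = \langle \Phi f, \Phi g\rangle_{L_{\cle}^2(\mathbb{T}^n)} = \langle \Phi^*\Phi f, g\rangle_{L_{\cle}^2(\mathbb{T}^n)} = \langle T_{\Phi^*\Phi} f, g\rangle,
\]
so $M_{\Phi}^* M_{\Phi} = T_{\Phi^*\Phi} = T_{I_{\cle}} = I$; hence $T_{\Phi}$ is an isometry.

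For the converse, assume $T_{\Phi}$ is an isometry. The point to keep in mind is that one should \emph{not} try to write $T_{\Phi}^* T_{\Phi} = T_{\Phi^*\Phi}$ (a false identity for non-analytic $\Phi$); instead I would start from the Pythagorean identity
\[
\|T_{\Phi} f\|^2 = \|P_{H_{\cle}^2(\D^n)}(\Phi f)\|^2 = \|\Phi f\|_{L_{\cle}^2(\mathbb{T}^n)}^2 - \|Q(\Phi f)\|_{L_{\cle}^2(\mathbb{T}^n)}^2, \qquad Q := I - P_{H_{\cle}^2(\D^n)},
\]
valid for every $f \in H_{\cle}^2(\D^n)$. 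Being an isometry, $T_{\Phi}$ therefore satisfies $\|\Phi f\|^2 - \|Q(\Phi f)\|^2 = \|f\|^2$ for all such $f$.

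Now I would feed in the vectors $f = \bm{z}^{\bm{m}}\eta$ with $\eta \in \cle$ and $\bm{m} \in \Nat^n$. Denoting the operator Fourier coefficients of $\Phi$ by $\widehat{\Phi}(\bm{k}) \in \clb(\cle)$, $\bm{k} \in \Int^n$, multiplication by $\bm{z}^{\bm{m}}$ is unitary on $L_{\cle}^2(\mathbb{T}^n)$, so $\|\Phi(\bm{z}^{\bm{m}}\eta)\|_{L_{\cle}^2(\mathbb{T}^n)}^2 = \|\Phi\eta\|^2 = \sum_{\bm{k} \in \Int^n}\|\widehat{\Phi}(\bm{k})\eta\|^2$ is independent of $\bm{m}$, while $Q$ removes only the coefficients indexed by $\Nat^n$, so $\|Q(\bm{z}^{\bm{m}}\Phi\eta)\|^2 = \sum_{\bm{k}+\bm{m}\notin\Nat^n}\|\widehat{\Phi}(\bm{k})\eta\|^2$. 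The isometry identity collapses to
\[
\sum_{\bm{k}\,:\,\bm{k}+\bm{m}\in\Nat^n}\|\widehat{\Phi}(\bm{k})\eta\|^2 = \|\eta\|_{\cle}^2 \qquad (\bm{m} \in \Nat^n).
\]
Taking $\bm{m} = (N,\dots,N)$ and letting $N \to \infty$, the index sets increase to $\Int^n$, so the left side increases to $\|\Phi\eta\|^2$; hence $\|\Phi\eta\|_{L_{\cle}^2(\mathbb{T}^n)} = \|\eta\|_{\cle}$. Substituting this back with $\bm{m} = 0$ gives $\sum_{\bm{k}\notin\Nat^n}\|\widehat{\Phi}(\bm{k})\eta\|^2 = 0$, so $\widehat{\Phi}(\bm{k}) = 0$ for every $\bm{k}\notin\Nat^n$: that is, $\Phi \in H_{\clb(\cle)}^{\infty}(\D^n)$. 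Then $T_{\Phi} = M_{\Phi}$ and the ``if''-direction computation gives $T_{\Phi^*\Phi} = M_{\Phi}^*M_{\Phi} = I = T_{I_{\cle}}$; since the symbol map $\Psi \mapsto T_{\Psi}$ is injective --- reading off $\langle T_{\Psi}\bm{z}^{\bm{m}}\eta, \bm{z}^{\bm{l}}\zeta\rangle = \langle \widehat{\Psi}(\bm{l}-\bm{m})\eta,\zeta\rangle$ over $\bm{m},\bm{l}\in\Nat^n$ recovers all Fourier coefficients --- we conclude $\Phi^*\Phi = I_{\cle}$ a.e., i.e.\ $\Phi$ is inner.

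The one genuinely delicate point is the converse in several variables: the naive formula $T_{\Phi}^*T_{\Phi} = T_{\Phi^*\Phi}$ is unavailable, and to extract analyticity one must exploit Toeplitzness through the translates $\bm{z}^{\bm{m}}$ and push $\bm{m} \to \infty$ \emph{in all coordinates at once}, so that the co-analytic defect $\|Q(\bm{z}^{\bm{m}}\Phi\eta)\|$ is squeezed to $0$; everything else is routine Fourier-coefficient bookkeeping.
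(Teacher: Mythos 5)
Your proof is correct, but it takes a genuinely different route from the paper. The paper argues operator-algebraically: from $T_{\Phi}^*T_{\Phi}=I$ together with the Brown--Halmos-type criterion $M_{z_i}^*T_{\Phi}M_{z_i}=T_{\Phi}$ and the identity $I-M_{z_i}M_{z_i}^*=P_{\ker M_{z_i}^*}$, it deduces $M_{z_i}^*T_{\Phi}^*P_{\ker M_{z_i}^*}T_{\Phi}M_{z_i}=0$, hence $P_{\ker M_{z_i}^*}T_{\Phi}M_{z_i}=0$, i.e.\ $M_{z_i}T_{\Phi}=T_{\Phi}M_{z_i}$ for every $i$; commuting with the shifts forces $\Phi\in H^{\infty}_{\clb(\cle)}(\D^n)$ (citing Sz.-Nagy--Foias), and innerness then follows since $T_{\Phi}$ is an isometry. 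You instead never invoke the algebraic characterization: you work directly from $T_{\Phi}=P_{H^2_{\cle}}L_{\Phi}|_{H^2_{\cle}}$, use the Pythagorean identity $\|T_{\Phi}f\|^2=\|\Phi f\|^2-\|Q(\Phi f)\|^2$ on monomials $\bm{z}^{\bm{m}}\eta$, and let $\bm{m}=(N,\dots,N)\to\infty$ so that monotone convergence plus Parseval kills the co-analytic Fourier coefficients of $\Phi$; you then make the final step explicit by identifying $M_{\Phi}^*M_{\Phi}=T_{\Phi^*\Phi}=T_{I_{\cle}}$ and appealing to injectivity of the symbol map. The paper's route is shorter, coordinate-free, and recycles machinery (Theorem 2.3 and identity (2.1)) that the rest of the paper needs anyway, but it outsources two facts (commutation with shifts implies analytic symbol; terse conclusion of innerness) to the literature. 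Your route is more self-contained and elementary --- pure Fourier-coefficient bookkeeping --- and it spells out the uniqueness-of-symbol argument that the paper leaves implicit; the cost is a computation that is more coordinate-bound and needs the standard vector-valued Parseval identities for $L^2_{\cle}(\mathbb{T}^n)$. Both proofs are sound, and both correctly isolate the key point that analyticity of $\Phi$ must be extracted before $T_{\Phi}^*T_{\Phi}$ can be related to $\Phi^*\Phi$.
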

\begin{proof}
Any isometry $V$ on a Hilbert space $\clh$ satisfies the identity $V^*V = I_{\clh}$.  Using this property we observe that if $T_{\Phi}$ is an isometry, then
\[
I_{H_{\cle}^2(\D^n)} = T_{\Phi}^* T_{\Phi} = M_{z_i}^* T_{\Phi}^* M_{z_i}M_{z_i}^* T_{\Phi} M_{z_i} \quad (\forall i \in \{1,\ldots,n\}).
\]
Since $M_{z_i}$ is an isometry for each $i \in \{1,\ldots,n\}$, we can use identity (\ref{pi}) to get the following.
\[
M_{z_i}^* T_{\Phi}^*  T_{\Phi} M_{z_i} - M_{z_i}^* T_{\Phi}^*  P_{\ker M_{z_i}^*}T_{\Phi} M_{z_i}= I_{H_{\cle}^2(\D^n)} - M_{z_i}^* T_{\Phi}^*  P_{\ker M_{z_i}^*}T_{\Phi} M_{z_i},
\]
for all $i \in \{1,\ldots,n\}$. Thus, we get that $M_{z_i}^* T_{\Phi}^*  P_{\ker M_{z_i}^*}T_{\Phi} M_{z_i}=0$, which further implies,
\[
P_{\ker M_{z_i}^*}T_{\Phi} M_{z_i} =0.
\]
Now, $P_{\ker M_{z_i}^*}T_{\Phi} M_{z_i} = (I_{H^2(\D^n)} - M_{z_i}M_{z_i}^*)T_{\Phi} M_{z_i} = T_{\Phi} M_{z_i}  -M_{z_i}  T_{\Phi}$ implies that the above identity should force
$M_{z_i}  T_{\Phi} = T_{\Phi} M_{z_i}$ for all $i \in \{1,\ldots,n\}$ and hence, $\Phi$ must be in $H_{\clb(\cle)}^{\infty}(\D^n)$ \cite{NF}. Since $T_{\Phi}$ is an isometry, $\Phi$ must be an inner function in $H_{\clb(\cle)}^{\infty}(\D^n)$. For the converse direction, it is well-known that a Toeplitz operator corresponding to an inner function is always an isometry \cite{NF}. This completes the proof.
\end{proof}
\begin{cor}\label{unitary}
Let $T_{\Phi}$ be a Toeplitz operator on $H_{\cle}^2(\D^n)$, then $T_{\Phi}$ is a unitary if and only if $\Phi$ is a constant unitary on $\cle$.
\end{cor}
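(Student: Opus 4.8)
The plan is to derive this from the preceding theorem characterizing isometric Toeplitz operators, applied simultaneously to $T_{\Phi}$ and to its adjoint. First I would record that $T_{\Phi}^{*} = T_{\Phi^{*}}$, where $\Phi^{*} \in L_{\clb(\cle)}^{\infty}(\mathbb{T}^n)$ denotes the pointwise adjoint of $\Phi$; indeed, for $f,g \in H_{\cle}^2(\D^n)$ one has $\langle T_{\Phi} f, g \rangle = \langle L_{\Phi} f, g \rangle = \langle f, L_{\Phi^{*}} g \rangle = \langle f, T_{\Phi^{*}} g \rangle$. Then I would invoke the elementary fact that a bounded Hilbert space operator is unitary if and only if both it and its adjoint are isometries.

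So suppose $T_{\Phi}$ is unitary. Applying the isometry theorem to $T_{\Phi}$ shows that $\Phi$ is inner, in particular $\Phi \in H_{\clb(\cle)}^{\infty}(\D^n)$; applying it to $T_{\Phi}^{*} = T_{\Phi^{*}}$ shows that $\Phi^{*} \in H_{\clb(\cle)}^{\infty}(\D^n)$ as well. The key step is then a Fourier-coefficient argument: writing $\Phi \sim \sum_{\bm{k} \in \Int^n} \widehat{\Phi}(\bm{k})\, e^{i\bm{k}\cdot\theta}$ with $\widehat{\Phi}(\bm{k}) \in \clb(\cle)$, analyticity of $\Phi$ forces $\widehat{\Phi}(\bm{k}) = 0$ whenever some $k_j < 0$, while analyticity of $\Phi^{*}$ (whose $\bm{k}$-th Fourier coefficient is $\widehat{\Phi}(-\bm{k})^{*}$) forces $\widehat{\Phi}(\bm{k}) = 0$ whenever some $k_j > 0$. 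Hence $\widehat{\Phi}(\bm{k}) = 0$ for all $\bm{k} \neq \bm{0}$, so $\Phi$ coincides a.e.\ on $\mathbb{T}^n$ with the constant operator $\Phi_{0} := \widehat{\Phi}(\bm{0}) \in \clb(\cle)$. Finally, since $\Phi = \Phi_{0}$ is inner, $\Phi_{0}$ is an isometry on $\cle$, and since $\Phi^{*} = \Phi_{0}^{*}$ is inner, $\Phi_{0}^{*}$ is an isometry on $\cle$; thus $\Phi_{0}^{*}\Phi_{0} = \Phi_{0}\Phi_{0}^{*} = I_{\cle}$, i.e.\ $\Phi_{0}$ is a constant unitary on $\cle$.

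For the converse I would simply observe that if $\Phi$ is a constant unitary $U$ on $\cle$, then $T_{\Phi} = M_{U}$ acts by $(M_{U}f)(\bm{z}) = U f(\bm{z})$, so $M_{U}^{*}M_{U} = M_{U^{*}U} = I_{H_{\cle}^2(\D^n)} = M_{UU^{*}} = M_{U}M_{U}^{*}$, whence $T_{\Phi}$ is unitary. The only part that is not a one-line consequence of an already-established result is the Fourier-coefficient computation showing that $\Phi, \Phi^{*} \in H^{\infty}_{\clb(\cle)}(\D^n)$ forces $\Phi$ to be constant, and even that is routine; I do not anticipate any genuine obstacle.
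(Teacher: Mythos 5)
Your proposal is correct and follows essentially the same route as the paper: apply the isometry characterization to both $T_{\Phi}$ and $T_{\Phi}^{*}=T_{\Phi^{*}}$, conclude that $\Phi$ and $\Phi^{*}$ are both inner (hence analytic), deduce that $\Phi$ is constant, and then that the constant is unitary. You merely spell out details the paper leaves implicit (the Fourier-coefficient argument for constancy and the easy converse), which is fine.
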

\begin{proof}
From the assumption, both $T_{\Phi}$ and $T_{\Phi^*} = T_{\Phi}^*$ are isometries. Following the above result, both $\Phi$, and $\Phi^*$ must be inner functions in $H_{\clb(\cle)}^{\infty}(\D^n)$. Thus, $\Phi$ must be a constant operator on $\cle$. Since $T_{\Phi}$ is unitary, $\Phi$ must be a constant unitary on $\cle$. This completes the proof.
\end{proof}
Similar, to the case of Toeplitz operators on $H_{\cle}^2(\D)$ \cite{CHL}, we can consider the following decomposition on $\D^n$ for symbols $F,G \in L_{\clb(\cle)}^{\infty}(\mathbb{T}^n)$,
\[
T_{FG} = T_{F} T_{G} + H_{F^*}^* H_{G}.
\]
Here $H_{F}: = JP_{H_{\cle}^{2}(\D^n)^{\perp}} L_{F}|_{H_{\cle}^{2}(\D^n)}: H_{\cle}^{2}(\D^n) \raro H_{\cle}^{2}(\D^n)$ is the Hankel operator with symbol $\Phi$, and $J: (H_{\cle}^2(\D^n))^{\perp} \raro  H_{\cle}^2(\D^n)$ is the unitary defined by $J(f)(\bm{z}) = \bar{\bm{z}} I_{\cle}f(\bar{\bm{z}})$, for $f \in (H_{\cle}^2(\D^n))^{\perp}$. It is well known that Hankel operators satisfy the following property for each $i \in \{1,\ldots,n\}$. 
\begin{equation}\label{comm}
M_{z_i}^* H_F = H_F M_{z_i}.
\end{equation}
We refer the reader to the monographs \cite{BS, Peller} for an elaborate discussion of the above facts. We can establish the following well-known fact using the above intertwining property.

\begin{thm}\label{ptoep}
If $T_F T_G$ is a Toeplitz operator on $H_{\cle}^2(\D^n)$, then $T_F T_G = T_{FG}$.
\end{thm}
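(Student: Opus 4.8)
The plan is to use the operator identity $T_{FG}=T_F T_G + H_{F^*}^* H_G$ recorded just above the statement, which turns the assertion into the claim that the ``obstruction'' $K:=H_{F^*}^* H_G$ vanishes as soon as $T_F T_G$ is Toeplitz. Since $F,G\in L_{\clb(\cle)}^{\infty}(\mathbb{T}^n)$ forces $FG\in L_{\clb(\cle)}^{\infty}(\mathbb{T}^n)$, the operator $T_{FG}$ is automatically Toeplitz, so by Theorem~\ref{Toep_char} it satisfies $M_{z_i}^* T_{FG} M_{z_i}=T_{FG}$ for every $i$; by hypothesis $T_F T_G$ satisfies the same identities. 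Subtracting, $K$ obeys
\[
M_{z_i}^* K M_{z_i}=K \qquad (i=1,\ldots,n),
\]
and a trivial induction upgrades this to $M_{z_i}^{*k}KM_{z_i}^{k}=K$ for every $k\in\Nat$.

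Next I would push the shifts through the two Hankel factors. Iterating the intertwining relation \eqref{comm} written for the symbol $G$ gives $H_G M_{z_i}^{k}=M_{z_i}^{*k}H_G$, and taking adjoints in the relation \eqref{comm} written for the symbol $F^*$ gives $M_{z_i}^{*k}H_{F^*}^*=H_{F^*}^* M_{z_i}^{k}$. Substituting both into $K=M_{z_i}^{*k}H_{F^*}^* H_G M_{z_i}^{k}$ collapses the expression to
\[
K = H_{F^*}^* \, M_{z_i}^{k} M_{z_i}^{*k} \, H_G,
\]
valid for every $k\in\Nat$ and, in fact, for a single choice of $i$ (say $i=1$).

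Finally, I would invoke strong convergence: $\|M_{z_1}^{*k}f\|^2=\sum_{\bm{k}:\,k_1\geq k}\|a_{\bm{k}}\|_{\cle}^2\to 0$, so $M_{z_1}^{*k}\to 0$ in the strong operator topology, and hence $M_{z_1}^{k}M_{z_1}^{*k}\to 0$ strongly as well, $M_{z_1}^{k}$ being isometric. Applying the displayed identity to an arbitrary $f\in H_{\cle}^2(\D^n)$ and letting $k\to\infty$ yields
\[
Kf = \lim_{k\to\infty} H_{F^*}^*\, M_{z_1}^{k}M_{z_1}^{*k}\, H_G f = 0,
\]
since the left-hand side does not depend on $k$ while $H_{F^*}^*$ is bounded. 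Therefore $K=0$, i.e.\ $T_F T_G=T_{FG}$.

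I do not expect a serious obstacle here: the only real care needed is the bookkeeping of the intertwining identities — making sure that the adjoint of \eqref{comm} is taken for the correct symbol and applied on the correct side — together with the (mildly surprising) observation that the structure in a single coordinate already forces $K=0$. The remainder is the familiar Brown--Halmos iteration combined with the pureness of the shift $M_{z_1}$; no compactness or additional analytic ingredient is required.
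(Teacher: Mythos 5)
Your proposal is correct and takes essentially the same route as the paper: both reduce the claim to showing $H_{F^*}^*H_G=0$ by applying the Brown--Halmos criterion (Theorem \ref{Toep_char}) to the difference $T_{FG}-T_FT_G$, then using the Hankel intertwining relation (\ref{comm}) together with the strong convergence $M_{z_1}^{*k}\to 0$. The only cosmetic difference is that you also push the shift powers through $H_{F^*}^*$ to rewrite the residual as $H_{F^*}^*M_{z_1}^{k}M_{z_1}^{*k}H_G$ before letting $k\to\infty$, whereas the paper estimates $\|M_{z_i}^{*k}H_{F^*}^*M_{z_i}^{*k}H_Gx\|$ directly.
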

\begin{proof}
Note that $T_F T_G = T_{FG} - H_{F^*}^* H_G$. If $T_F T_G = T_Y$ is a Toeplitz operators (say) $T_Y$, then 
\[
T_Y =  T_{FG} - H_{F^*}^* H_G.
\]
This implies that $T_{Y - FG} =  H_{F^*}^* H_G$. But then we have
\[
M_{z_i}^{*k} H_{F^*}^* H_G M_{z_i}^{k} = H_{F^*}^* H_G.
\]
Using the intertwining identity for Hankel operators, we get
\[
M_{z_i}^{*k} H_{F^*}^* M_{z_i}^{*k} H_G  = H_{F^*}^* H_G,
\]
and therefore, for each $x \in H_{\cle}^2(\D^n)$, we must have
\[
\|H_{F^*}^* H_G x\| \leq \|M_{z_i}^{*k} H_G  x\| \underset{k \raro \infty}{\raro} 0.
\]
It is because for all $i \in \{1,\ldots,n\}$, the operator $M_{z_i}$ is a pure isometry, that is,  $M_{z_i}^{*k} \raro 0$ in the strong operator topology as $k \raro \infty$. The above inequality implies that $H_{F^*}^* H_G =0$, and hence, $T_F T_G = T_{FG} $. This completes the proof.
\end{proof}

\section{Product of Toeplitz operators}\label{product}
In this section, we aim to establish necessary and sufficient conditions for which a certain product of Toeplitz operators corresponding to analytic symbols is again a Toeplitz operator. Let us begin with a lemma useful for finding conditions in the general situation.

\begin{lemma}\label{toepcondn1}
Let $\clf, \cle$ be Hilbert spaces and $\Gamma \in H_{\clb(\clf, \cle)}^{\infty}(\D^n)$ and $\Psi \in H_{\clb(\clf,  \cle)}^{\infty}(\D^n)$ be operator-valued bounded analytic functions. Then the following are equivalent\\
$(i)$~$M_{\Gamma}M_{\Psi}^*$ is a Toeplitz operator on $H_{\cle}^2(\D^n)$.\\
$(ii)$~$M_{z_i}^*M_{\Gamma} P_{\ker M_{z_i^*}} M_{\Psi}^*  M_{z_i}= 0$, for all $i \in \{1,\ldots,n\}$.
\end{lemma}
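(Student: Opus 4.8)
The plan is to derive the equivalence directly from the algebraic characterization of Toeplitz operators. By Theorem \ref{Toep_char}, the bounded operator $M_{\Gamma}M_{\Psi}^*$ on $H_{\cle}^2(\D^n)$ is a Toeplitz operator if and only if $M_{z_i}^* M_{\Gamma}M_{\Psi}^* M_{z_i} = M_{\Gamma}M_{\Psi}^*$ for every $i \in \{1,\dots,n\}$. Hence the entire content of the lemma reduces to establishing the single operator identity
\[
M_{z_i}^* M_{\Gamma}M_{\Psi}^* M_{z_i} - M_{\Gamma}M_{\Psi}^* = M_{z_i}^* M_{\Gamma}\, P_{\ker M_{z_i}^*}\, M_{\Psi}^* M_{z_i} \qquad (i = 1,\dots,n),
\]
after which both implications are immediate: if $M_{\Gamma}M_{\Psi}^*$ is Toeplitz the left-hand side vanishes for all $i$, forcing $(ii)$; and conversely, if $(ii)$ holds then the right-hand side vanishes for all $i$, so the defining relation of Theorem \ref{Toep_char} is satisfied and $M_{\Gamma}M_{\Psi}^*$ is Toeplitz.

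To prove the displayed identity I would use two standard facts. First, since $\Gamma$ and $\Psi$ are bounded \emph{analytic} functions on $\D^n$, the multiplication operators $M_{\Gamma}$ and $M_{\Psi}$ commute with every shift $M_{z_i}$ (see \cite{NF}); taking adjoints, $M_{\Psi}^*$ commutes with $M_{z_i}^*$, and in particular $M_{z_i}^* M_{\Psi}^* M_{z_i} = M_{\Psi}^*$ and $M_{z_i}^* M_{\Gamma} M_{z_i} = M_{\Gamma}$, using $M_{z_i}^* M_{z_i} = I$. Second, each $M_{z_i}$ is an isometry, so identity (\ref{pi}) gives the orthogonal decomposition of the identity $I_{H_{\cle}^2(\D^n)} = M_{z_i}M_{z_i}^* + P_{\ker M_{z_i}^*}$.

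Inserting this decomposition of the identity and using the commutation relations, I compute
\[
M_{z_i}^* M_{\Gamma} = M_{z_i}^* M_{\Gamma}\big(M_{z_i}M_{z_i}^* + P_{\ker M_{z_i}^*}\big) = M_{z_i}^* M_{\Gamma} M_{z_i}M_{z_i}^* + M_{z_i}^* M_{\Gamma} P_{\ker M_{z_i}^*} = M_{\Gamma}M_{z_i}^* + M_{z_i}^* M_{\Gamma} P_{\ker M_{z_i}^*},
\]
and therefore
\[
M_{z_i}^* M_{\Gamma}M_{\Psi}^* M_{z_i} = M_{\Gamma}\, M_{z_i}^* M_{\Psi}^* M_{z_i} + M_{z_i}^* M_{\Gamma} P_{\ker M_{z_i}^*} M_{\Psi}^* M_{z_i} = M_{\Gamma}M_{\Psi}^* + M_{z_i}^* M_{\Gamma} P_{\ker M_{z_i}^*} M_{\Psi}^* M_{z_i},
\]
which is exactly the desired identity. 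There is no serious obstacle here: the argument is a short computation, and the only point requiring care is the bookkeeping of the commutation relations together with the correct decomposition $I = M_{z_i}M_{z_i}^* + P_{\ker M_{z_i}^*}$ — the key organizational idea being to isolate the single ``defect'' term $M_{z_i}^* M_{\Gamma} P_{\ker M_{z_i}^*} M_{\Psi}^* M_{z_i}$, which measures precisely the failure of $M_{\Gamma}M_{\Psi}^*$ to satisfy the Toeplitz relation in the $i$-th variable. This identity will presumably serve as the starting point for extracting the more explicit conditions on the Fourier coefficients of $\Gamma$ and $\Psi$ in the results that follow.
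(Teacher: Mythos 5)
Your proof is correct and follows essentially the same route as the paper: invoke the algebraic characterization of Theorem \ref{Toep_char}, insert the decomposition $I = M_{z_i}M_{z_i}^* + P_{\ker M_{z_i}^*}$ from identity (\ref{pi}), and use the commutation of $M_{\Gamma}, M_{\Psi}$ with the shifts to isolate the defect term $M_{z_i}^* M_{\Gamma} P_{\ker M_{z_i}^*} M_{\Psi}^* M_{z_i}$. The only difference is that you spell out the commutation bookkeeping more explicitly than the paper does, which is fine.
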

\begin{proof}
Using identity (\ref{pi}), we know that 
\[
I_{H^2_{\cle}(\D^n)} = M_{z_i}M_{z_i}^* + P_{\ker M_{z_i}^*},
\]
for all $i \in \{1,\ldots,n\}$. Therefore, for any $i \in \{1,\ldots,n\}$ we get
\[
M_{z_i}^* M_{\Gamma} M_{\Psi}^* M_{z_i} =M_{\Gamma}M_{\Psi}^* + M_{z_i}^* M_{\Gamma} P_{\ker M_{z_i}^*} M_{\Psi}^* M_{z_i}.
\]
From Theorem \ref{Toep_char}, it is clear that $M_{\Gamma}M_{\Psi}^*$ is a Toeplitz operator if and only if $M_{z_i}^* M_{\Gamma} M_{\Psi}^* M_{z_i} = M_{\Gamma} M_{\Psi}^*$ for all $i \in \{1,\ldots,n\}$. In other words, $M_{z_i}^* M_{\Gamma} P_{\ker M_{z_i}^*} M_{\Psi}^* M_{z_i}=0$ for all $i \in \{1,\ldots,n\}$.  This completes the proof.
\end{proof}
We can further strengthen the above condition in the following manner.
\begin{lemma}\label{toepcondn2}
Let $\clf, \cle$ be Hilbert spaces and $\Gamma(\bm{z}) \in H_{\clb(\clf, \cle)}^{\infty}(\D^n)$ and $\Psi(\bm{z}) \in H_{\clb(\clf,  \cle)}^{\infty}(\D^n)$. Then the following are equivalent for any $i \in \{1,\ldots,n\}$.\\
$(i)$~$M_{z_i}^*M_{\Gamma} P_{\ker M_{z_i^*}} M_{\Psi}^*  M_{z_i}= 0$\\
$(ii)$~$M_{z_i}^*M_{\Gamma} P_{\cle} M_{\Psi}^*  M_{z_i}= 0$.
\end{lemma}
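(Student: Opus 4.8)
Fix $i\in\{1,\ldots,n\}$. My plan is to compute the sesquilinear forms of the two operators appearing in $(i)$ and $(ii)$ on the reproducing-kernel sections of $H_{\cle}^2(\D^n)$, and to observe that these forms agree up to a scalar factor that never vanishes; since both operators are bounded, they must then vanish simultaneously. Write $k_{\bm{w}}$ for the Szeg\H{o} kernel of $\D^n$ at $\bm{w}$ and (for the fixed $i$) $\widehat{\bm{w}}$ for $\bm{w}$ with $0$ in its $i$-th coordinate, so that $\Gamma_i(\bm{z})=\Gamma(\widehat{\bm{z}})$, $\Psi_i(\bm{z})=\Psi(\widehat{\bm{z}})$. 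The elementary ingredients are: $M_\Psi^*(k_{\bm{w}}\otimes\eta)=k_{\bm{w}}\otimes\Psi(\bm{w})^*\eta$ (and the same for $\Gamma$); the partial-fraction identity $z_ik_{\bm{w}}=\bar{w}_i^{-1}\big(k_{\bm{w}}-k_{\widehat{\bm{w}}}\big)$ when $w_i\neq 0$, which writes $M_{z_i}(k_{\bm{w}}\otimes\eta)$ as a combination of kernel sections; $P_{\ker M_{z_i}^*}(k_{\bm{w}}\otimes\xi)=k_{\widehat{\bm{w}}}\otimes\xi$ (discard the $z_i$-dependent part); and $P_{\cle}(k_{\widehat{\bm{w}}}\otimes\xi)=\xi$, the constant function.

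Combining the first three identities gives $P_{\ker M_{z_i}^*}M_\Psi^*M_{z_i}(k_{\bm{w}}\otimes\eta)=\bar{w}_i^{-1}\,k_{\widehat{\bm{w}}}\otimes\big(\Psi(\bm{w})-\Psi_i(\bm{w})\big)^*\eta$, and applying $P_{\cle}$ afterwards merely replaces $k_{\widehat{\bm{w}}}$ by $1$. Now pair with $k_{\bm{v}}\otimes\zeta$ (take $v_i\neq 0$), using $\langle M_{z_i}^*M_\Gamma u,\,k_{\bm{v}}\otimes\zeta\rangle=\langle u,\,M_\Gamma^*M_{z_i}(k_{\bm{v}}\otimes\zeta)\rangle$ together with $M_\Gamma^*M_{z_i}(k_{\bm{v}}\otimes\zeta)=\bar{v}_i^{-1}\big(k_{\bm{v}}\otimes\Gamma(\bm{v})^*\zeta-k_{\widehat{\bm{v}}}\otimes\Gamma_i(\bm{v})^*\zeta\big)$ and the coincidence $\langle k_{\widehat{\bm{w}}},k_{\bm{v}}\rangle=\langle k_{\widehat{\bm{w}}},k_{\widehat{\bm{v}}}\rangle=:c(\bm{v},\bm{w})$. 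A short computation yields
\[
\big\langle M_{z_i}^*M_\Gamma P_{\ker M_{z_i}^*}M_\Psi^*M_{z_i}(k_{\bm{w}}\otimes\eta),\,k_{\bm{v}}\otimes\zeta\big\rangle=\frac{c(\bm{v},\bm{w})}{\bar{w}_i\,v_i}\,\big\langle\big(\Gamma(\bm{v})-\Gamma_i(\bm{v})\big)\big(\Psi(\bm{w})-\Psi_i(\bm{w})\big)^*\eta,\,\zeta\big\rangle,
\]
\[
\big\langle M_{z_i}^*M_\Gamma P_{\cle}M_\Psi^*M_{z_i}(k_{\bm{w}}\otimes\eta),\,k_{\bm{v}}\otimes\zeta\big\rangle=\frac{1}{\bar{w}_i\,v_i}\,\big\langle\big(\Gamma(\bm{v})-\Gamma_i(\bm{v})\big)\big(\Psi(\bm{w})-\Psi_i(\bm{w})\big)^*\eta,\,\zeta\big\rangle.
\]
Here $c(\bm{v},\bm{w})$ is a product of factors $(1-\bar{w}_j v_j)^{-1}$ with $j\neq i$, hence nonzero on $\D^n\times\D^n$, and $\bar{w}_i,v_i\neq 0$ by our choice. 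Since $\{k_{\bm{w}}\otimes\eta:w_i\neq 0\}$ is still a total subset of $H_{\cle}^2(\D^n)$, both operators vanish if and only if $\big(\Gamma(\bm{\lambda})-\Gamma_i(\bm{\lambda})\big)\big(\Psi(\bm{\mu})-\Psi_i(\bm{\mu})\big)^*=0$ for all $\bm{\lambda},\bm{\mu}\in\D^n$ with nonzero $i$-th coordinate, hence (by continuity of that expression) for all $\bm{\lambda},\bm{\mu}\in\D^n$. In particular $(i)\Leftrightarrow(ii)$, as claimed.

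This is essentially a computation; the points needing care are keeping the auxiliary spaces $\clf,\cle$ straight throughout the kernel identities (so that $P_{\cle}$ is really the projection onto the constants of $H_{\clf}^2(\D^n)$, and the symbols act between the right spaces), and the small remark that kernel sections with $w_i\neq 0$ stay total, so that the forms above pin down the operators. An alternative I would keep in reserve is to expand $\Gamma,\Psi$ in powers of $z_i$ and reduce $(i)\Leftrightarrow(ii)$ to: for operator-valued bounded analytic $A,B$ on $\D^{n-1}$, $M_AM_B^*=0$ iff $M_A\,P\,M_B^*=0$ with $P$ the projection onto constants --- which again follows from the kernel identity, since $\overline{\ran M_B^*}$ is spanned by sections $k_{\bm{w}}\otimes B(\bm{w})^*\eta$ and scaling such a section by the zero-free scalar $k_{\bm{w}}$ does not affect whether $M_A$ kills it.
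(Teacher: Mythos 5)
Your argument is correct, and it proves the lemma by a genuinely different route from the paper. The paper's proof is purely operator-algebraic: for $(i)\Rightarrow(ii)$ it repeatedly inserts the resolution of identity $I=M_{z_j}M_{z_j}^*+P_{\ker M_{z_j}^*}$ for $j\neq i$, uses the doubly commuting relations to recognize the $M_{z_j}M_{z_j}^*$ term as a conjugation of the operator assumed to vanish, and so accumulates the kernel projections one coordinate at a time until their product, the projection onto the constants, appears; for $(ii)\Rightarrow(i)$ it reverses this by replacing kernel projections with $I-M_{z_j}M_{z_j}^*$, conjugating by powers of $M_{z_j}$, and letting $M_{z_j}^{*m}\to 0$ strongly. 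You instead compute the sesquilinear forms of both operators on kernel sections $k_{\bm{w}}\otimes\eta$, $k_{\bm{v}}\otimes\zeta$ with $w_i,v_i\neq 0$, and your identities ($M_\Psi^*(k_{\bm{w}}\otimes\eta)=k_{\bm{w}}\otimes\Psi(\bm{w})^*\eta$, $z_ik_{\bm{w}}=\bar w_i^{-1}(k_{\bm{w}}-k_{\widehat{\bm{w}}})$, $P_{\ker M_{z_i}^*}(k_{\bm{w}}\otimes\xi)=k_{\widehat{\bm{w}}}\otimes\xi$, and the coincidence $\langle k_{\widehat{\bm{w}}},k_{\bm{v}}\rangle=\langle k_{\widehat{\bm{w}}},k_{\widehat{\bm{v}}}\rangle$) are all correct, as is the totality of the restricted kernel sections; the two forms then differ only by the nonvanishing factor $c(\bm{v},\bm{w})$, which gives the equivalence. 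Your approach buys more than the paper's at this spot: it shows each of $(i)$ and $(ii)$ is equivalent, for that fixed $i$, to the pointwise condition $(\Gamma(\bm{\lambda})-\Gamma_i(\bm{\lambda}))(\Psi(\bm{\mu})-\Psi_i(\bm{\mu}))^*=0$, so it essentially short-circuits Proposition \ref{main2} and yields Theorem \ref{prod_mains} directly, with explicit formulas and no limiting arguments. What the paper's algebraic iteration buys in exchange is independence from the reproducing-kernel model, which is what allows the same manipulations to be transported verbatim to abstract doubly commuting shifts in Section \ref{abstract}. Your parenthetical care about the middle projection being the projection onto the constants of $H_{\clf}^2(\D^n)$ (the paper's ``$P_{\cle}$'' is a slight abuse of notation) is well placed and worth keeping.
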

\begin{proof}
Let us begin with the direction $(i) \implies (ii)$. Note that for any $i \in \{1,\ldots,n\}$,  
\[
M_{z_i}^*M_{\Gamma} P_{\ker M_{z_i^*}} M_{\Psi}^*  M_{z_i}= 0,
\]
implies that
\[
M_{z_i}^* M_{\Gamma} \big( M_{z_j} M_{z_j}^* + P_{\ker M_{z_j}^*} \big) P_{\ker M_{z_i}^*}  M_{\Psi}^* M_{z_i} = 0,
\]
for any $j \in \{1,\ldots,n\}$. In other words,
\[
M_{z_i}^* M_{\Gamma} M_{z_j} M_{z_j}^* P_{\ker M_{z_i}^*}  M_{\Psi}^* M_{z_i}+ M_{z_i}^* M_{\Gamma} P_{\ker M_{z_j}^*}   P_{\ker M_{z_i}^*}  M_{\Psi}^* M_{z_i} = 0.
\]
If $j \neq i$, then we can use the commutator identity $[M_{z_j}, M_{z_i}^*] = 0$ to get
\[
M_{z_j} M_{z_i}^* M_{\Gamma}  P_{\ker M_{z_i}^*}  M_{\Psi}^* M_{z_i} M_{z_j}^* + M_{z_i}^* M_{\Gamma} P_{\ker M_{z_j}^*}   P_{\ker M_{z_i}^*}  M_{\Psi}^* M_{z_i} = 0.
\]
By our assumption $M_{z_i}^* M_{\Gamma}  P_{\ker M_{z_i}^*}  M_{\Psi}^* M_{z_i}=0$, and hence,
\[
M_{z_i}^* M_{\Gamma} P_{\ker M_{z_j}^*}   P_{\ker M_{z_i}^*}  M_{\Psi}^* M_{z_i} = 0.
\]
Continuing in the same manner as above, we find that the above condition implies that for any $k \in \{1,\ldots,n\} \setminus \{i,j\}$, we get
\begin{align*}
&M_{z_i}^* M_{\Gamma}M_{z_k} M_{z_k}^* P_{\ker M_{z_j}^*}   P_{\ker M_{z_i}^*}  M_{\Psi}^* M_{z_i}  \\
&+ M_{z_1}^* M_{\Gamma}  P_{\ker M_{z_k}^*} P_{\ker M_{z_j}^*}   P_{\ker M_{z_i}^*}  M_{\Psi}^* M_{z_i} \\
&= 0,
\end{align*}
which further implies that
\[
M_{z_i}^* M_{\Gamma} P_{\ker M_{z_k}^*} P_{\ker M_{z_j}^*}   P_{\ker M_{z_i}^*}  M_{\Psi}^* M_{z_i} = 0.
\]
Iterating the same procedure for $n-3$ many times with distinct numbers in $\{1,\ldots,n\} \setminus \{i,j,k\}$, we can conclude that,
\[
M_{z_i}^* M_{\Gamma} P_{\cle} M_{\Psi}^* M_{z_i} = 0,
\]
because $P_{\cle} = \underset{i=1}{\overset{n}{\Pi}} P_{\ker M_{z_i}^*}$. To prove the opposite direction, let us observe that,
\[
M_{z_i}^* M_{\Gamma} P_{\cle} M_{\Psi}^* M_{z_i} = 0,
\]
for some $i \in \{1,\ldots,n\}$ will imply that
\[
M_{z_i}^* M_{\Gamma} P_{\ker M_{z_1}^*} P_{\ker M_{z_i}^*} \big( \underset{k=2; k \neq i}{\overset{n}{\Pi}}  P_{\ker M_{z_k}^*} \big)  M_{\Psi}^* M_{z_i} = 0.
\]
This implies that
\begin{align*}
&M_{z_i}^* M_{\Gamma} P_{\ker M_{z_i}^*} \big( \underset{k=2; k \neq i}{\overset{n}{\Pi}} P_{\ker M_{z_k}^*} \big) M_{\Psi}^* M_{z_i} \\
&= M_{z_i}^* M_{\Gamma}  M_{z_1}M_{z_1}^*P_{\ker M_{z_i}^*} \big( \underset{k=2; k \neq i}{\overset{n}{\Pi}} P_{\ker M_{z_k}^*} \big)  M_{\Psi}^* M_{z_i}\\
&= M_{z_1} M_{z_i}^* M_{\Gamma} P_{\ker M_{z_i}^*} \big( \underset{k=2; k \neq i}{\overset{n}{\Pi}} P_{\ker M_{z_k}^*} \big) M_{\Psi}^* M_{z_i} M_{z_1}^*.
\end{align*}
and therefore, using recursion, for any $m \in \Nat$, we get
\begin{align*}
&M_{z_i}^* M_{\Gamma} P_{\ker M_{z_i}^*}\Pi_{k=2; k \neq i}^n P_{\ker M_{z_k}^*} M_{\Psi}^* M_{z_i} \\
&= M_{z_1}^m M_{z_i}^* M_{\Gamma} P_{\ker M_{z_i}^*}\Pi_{k=2; k \neq i}^n P_{\ker M_{z_k}^*} M_{\Psi}^* M_{z_i} M_{z_1}^{*m}.
\end{align*}
This will imply that for any $f \in H_{\cle}^2(\D^n)$,
\begin{align*}
&\| M_{z_i}^* M_{\Gamma} P_{\ker M_{z_i}^*}\Pi_{k=2; k \neq i}^n P_{\ker M_{z_k}^*} M_{\Psi}^* M_{z_i}  f\| \\
&= \|M_{z_1}^m M_{z_i}^* M_{\Gamma} P_{\ker M_{z_i}^*}\Pi_{k=2; k \neq i}^n P_{\ker M_{z_k}^*} M_{\Psi}^* M_{z_i} M_{z_1}^{*m}f\| \\
& \leq \|M_{z_1}^{*m} f\| \underset{m \raro \infty} \raro 0.
\end{align*}
Hence, we get
\begin{equation}\label{identity1}
M_{z_i}^* M_{\Gamma} P_{\ker M_{z_i}^*}\Pi_{k=2; k \neq i}^n P_{\ker M_{z_k}^*} M_{\Psi}^* M_{z_i} = 0.
\end{equation}
We will repeat the same process coordinate-wise.  For instance, the above identity (\ref{identity1}) implies that,
\[
M_{z_i}^* M_{\Gamma} P_{\ker M_{z_2}^*}P_{\ker M_{z_i}^*}\Pi_{k=3; k \neq i}^n P_{\ker M_{z_k}^*} M_{\Psi}^* M_{z_i}  = 0,
\]
which further gives, 
\begin{align*}
&M_{z_i}^* M_{\Gamma} P_{\ker M_{z_i}^*}\Pi_{k=3; k \neq i}^n P_{\ker M_{z_k}^*} M_{\Psi}^* M_{z_i}  \\
&= 
M_{z_2}M_{z_i}^* M_{\Gamma} P_{\ker M_{z_i}^*}\Pi_{k=3; k \neq i}^n P_{\ker M_{z_k}^*} M_{\Psi}^* M_{z_i}M_{z_2}^*.
\end{align*}
Again as above, for any $m \in \Nat$, we have
\begin{align*}
&M_{z_i}^* M_{\Gamma} P_{\ker M_{z_i}^*}\Pi_{k=3; k \neq i}^n P_{\ker M_{z_k}^*} M_{\Psi}^* M_{z_i} \\
&= 
M_{z_2}^m M_{z_i}^* M_{\Gamma} P_{\ker M_{z_i}^*}\Pi_{k=3; k \neq i}^n P_{\ker M_{z_k}^*} M_{\Psi}^* M_{z_i}M_{z_2}^{*m},
\end{align*}
which, as in the earlier case, will give
\[
M_{z_i}^* M_{\Gamma} P_{\ker M_{z_i}^*}\Pi_{k=3; k \neq i}^n P_{\ker M_{z_k}^*} M_{\Psi}^* M_{z_i}  = 0.
\]
Repeating the same process for all coordinates in $\{3,\ldots,n\} \setminus \{i\}$, will give us
\[
M_{z_i}^* M_{\Gamma} P_{\ker M_{z_i}^*} M_{\Psi}^* M_{z_i}  = 0.
\]
This completes the proof.
\end{proof}
We are now ready to establish conditions making $M_{\Gamma}M_{\Psi}^*$ into a Toeplitz operator.
\begin{propn}\label{main2}
Let $\clf, \cle$ be Hilbert spaces and $\Gamma(\bm{z}), \Psi(\bm{z})$ be $\clb(\clf,\cle)$-valued bounded analytic functions on $\D^n$. More precisely, let
\[
\Gamma(\bm{z}):= \sum_{\bm{l} \in \Nat^n} A_{\bm{l}} \bm{z}^{\bm{l}}  \in H_{\clb(\clf, \cle)}^{\infty}(\D^n); \quad \Psi(\bm{z}):= \sum_{\bm{m} \in \Nat^n} B_{\bm{m}} \bm{z}^{\bm{m}}  \in H_{\clb(\clf, \cle)}^{\infty}(\D^n),
\]
where $A_{\bm{k}}, B_{\bm{k}} \in \clb(\clf,\cle)$ for all $\bm{k} \in \Nat^n$, then $M_{\Gamma}M_{\Psi}^*$ is a Toeplitz operator on $H_{\cle}^2(\D^n)$ if and only if 
\begin{equation}\label{prod}
A_{\bm{l}+ e_i} B_{\bm{m}+ e_i}^* = 0 \quad (\forall \bm{l},\bm{m} \in \Nat^n),
\end{equation}
where $e_i := (0,\ldots,0, \underset{\text{i-th position}}{1},0,\ldots,0)$ for all $i \in \{1,\ldots,n\}$. 
\end{propn}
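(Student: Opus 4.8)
The plan is to reduce the Toeplitz condition to a concrete statement about Fourier coefficients by invoking the two preceding lemmas, and then to read off the condition by a direct computation on the monomial basis. By Lemma \ref{toepcondn1} together with Lemma \ref{toepcondn2}, $M_{\Gamma}M_{\Psi}^*$ is a Toeplitz operator on $H_{\cle}^2(\D^n)$ if and only if
\[
M_{z_i}^* M_{\Gamma} P_{\cle} M_{\Psi}^* M_{z_i} = 0 \qquad (i \in \{1,\ldots,n\}),
\]
where $P_{\cle}$ denotes the orthogonal projection onto the subspace of constant functions in the relevant vector-valued Hardy space. Thus everything comes down to evaluating the left-hand side explicitly. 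Write $f = \sum_{\bm{k} \in \Nat^n} f_{\bm{k}} \bm{z}^{\bm{k}}$ with $f_{\bm{k}} \in \cle$.

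First I would record the actions of the operators involved on the monomial basis. A routine pairing $\langle M_{\Psi}^* g, h \rangle = \langle g, \Psi h \rangle$ with $\Psi h = \sum_{\bm{m},\bm{k}} B_{\bm{m}} h_{\bm{k}} \bm{z}^{\bm{m}+\bm{k}}$ gives $(M_{\Psi}^* g)_{\bm{k}} = \sum_{\bm{m} \in \Nat^n} B_{\bm{m}}^* g_{\bm{m}+\bm{k}}$, so that $P_{\cle} M_{\Psi}^* g = \sum_{\bm{m} \in \Nat^n} B_{\bm{m}}^* g_{\bm{m}}$ simply picks off the constant term. Since $(M_{z_i} f)_{\bm{m}} = f_{\bm{m}-e_i}$, interpreted as $0$ when $m_i = 0$, this yields $P_{\cle} M_{\Psi}^* M_{z_i} f = \sum_{\bm{m} \in \Nat^n} B_{\bm{m}+e_i}^* f_{\bm{m}}$. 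Applying $M_{\Gamma}$ to this constant vector produces $\sum_{\bm{l} \in \Nat^n} A_{\bm{l}} \big( \sum_{\bm{m}} B_{\bm{m}+e_i}^* f_{\bm{m}} \big) \bm{z}^{\bm{l}}$, and then $M_{z_i}^*$ (which lowers the $i$-th index and annihilates monomials with $l_i = 0$) gives
\[
M_{z_i}^* M_{\Gamma} P_{\cle} M_{\Psi}^* M_{z_i} f = \sum_{\bm{l}, \bm{m} \in \Nat^n} A_{\bm{l}+e_i} B_{\bm{m}+e_i}^* f_{\bm{m}} \, \bm{z}^{\bm{l}}.
\]
The boundedness of $M_{\Gamma}$ and $M_{\Psi}$ makes all these series manipulations legitimate.

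Finally I would extract the coefficient condition. The displayed operator vanishes on every $f$ if and only if, testing on $f = \eta \, \bm{z}^{\bm{m}}$ with $\bm{m} \in \Nat^n$ and $\eta \in \cle$ arbitrary, the function $\sum_{\bm{l}} A_{\bm{l}+e_i} B_{\bm{m}+e_i}^* \eta \, \bm{z}^{\bm{l}}$ is zero, i.e. $A_{\bm{l}+e_i} B_{\bm{m}+e_i}^* \eta = 0$ for all $\bm{l}$; varying $\eta$ over $\cle$ this holds precisely when $A_{\bm{l}+e_i} B_{\bm{m}+e_i}^* = 0$ for all $\bm{l}, \bm{m} \in \Nat^n$. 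Letting $i$ range over $\{1,\ldots,n\}$ gives the equivalence with (\ref{prod}). The essential step — passing from ``$M_{\Gamma}M_{\Psi}^*$ is Toeplitz'' to the finite family of identities involving $P_{\cle}$ — has already been carried out in Lemmas \ref{toepcondn1} and \ref{toepcondn2} (and is where the pure-isometry argument on each $M_{z_i}$ does the real work), so no serious further obstacle remains here; the only points demanding care are computing the adjoint $M_{\Psi}^*$ correctly and keeping track of which coordinate is being shifted at each stage.
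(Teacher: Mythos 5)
Your proposal is correct and follows essentially the same route as the paper: reduce via Lemmas \ref{toepcondn1} and \ref{toepcondn2} to the vanishing of $M_{z_i}^* M_{\Gamma} P_{\cle} M_{\Psi}^* M_{z_i}$, then compute on the monomial basis to read off the coefficient identities $A_{\bm{l}+e_i}B_{\bm{m}+e_i}^*=0$ (the paper organizes this as inner products of $P_{\cle}T_{z_i\Gamma^*}$ and $P_{\cle}T_{z_i\Psi^*}$ against monomials, you apply the composed operator directly to $f$, which is the same calculation). Your explicit remark that the middle projection is onto the constants of the \emph{relevant} Hardy space (i.e.\ onto $\clf$, not $\cle$) is an appropriate reading of the paper's notation.
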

\begin{proof}
From Lemma \ref{toepcondn1} and \ref{toepcondn2}, we know that $M_{\Gamma}M_{\Psi}^*$ is a Toeplitz operator on $H_{\cle}^2(\D^n)$ if and only if
\[
T_{\overline{z_i} \Gamma} P_{\cle} T_{z_i \Psi^*} = M_{z_i}^* M_{\Gamma}P_{\cle} M_{\Psi}^* M_{z_i}=0,
\]
for all $i \in \{1,\ldots,n\}$. Now, for any $\bm{j},\bm{k} \in \Nat^n$ and $\eta, \zeta \in \cle$, we get
\[
\langle P_{\cle}  T_{z_i \Psi^*} \bm{z}^{\bm{k}} \eta, P_{\cle}  T_{z_i \Gamma^*} \bm{z}^{\bm{j}} \zeta \rangle = \langle P_{\cle}  T_{z_i^{k_i+1} \Psi^*} \bm{\hat{z}_i}^{\bm{\hat{k}_i}} \eta, P_{\cle} T_{z_i^{j_i+1} \Gamma^*} \bm{\hat{z}_i}^{\bm{\hat{j}_i}} \zeta \rangle,
\]
where for any $i \in \{1,\ldots,n\}$ and $\bm{m} \in \Nat^n$,
\[
\bm{\hat{z}_i}^{\bm{\hat{m}_i}}: =\underset{k=1; k \neq i}{ \overset{n}{\Pi}} z_k^{m_k}.
\]
By this notation, we have $\bm{z}^{\bm{m}} = z_i^{m_i} \bm{\hat{z}_i}^{\bm{\hat{m}_i}}$. Now, note that from our assumption
\[
\Gamma(\bm{z}):= \sum_{\bm{l} \in \Nat^n} \bm{z}^{\bm{l}} A_{\bm{l}}; \quad \Psi(\bm{z}):= \sum_{\bm{m} \in \Nat^n} \bm{z}^{\bm{m}} B_{\bm{m}},
\]
where $A_{\bm{l}}:= A_{(l_1, \ldots,l_n)}, B_{\bm{m}}:= B_{(m_1, \ldots,m_n)} \in \clb(\clf, \cle)$ for all $\bm{l} = (l_1, \ldots,l_n),$ and $\bm{m} = (m_1, \ldots,m_n)$ in $\Nat^n$.  Now,
\begin{align*}
P_{\cle} T_{{z_i}^{j_i+1} \Gamma^*} \bm{\hat{z}_i}^{\bm{\hat{j}_i}} \zeta 
&= P_{\cle} P_{H_{\cle}^2(\D^n)}{z_i}^{j_i+1}  \bm{\hat{z}_i}^{\bm{\hat{j}_i}}  \Gamma(\bm{z})^* \zeta \\
&= P_{\cle} \sum_{\bm{l} \in \Nat^n} {z_i}^{j_i+1} \bm{\hat{z}_i}^{\bm{\hat{j}_i}} \bar{\bm{z}}^{\bm{l}} A_{\bm{l}}^* \zeta \\
&= P_{\cle} \sum_{\bm{l} \in \Nat^n} {z_i}^{j_i - l_i +1} \bm{\hat{z}_i}^{\bm{\hat{j}_i}} \bar{\bm{\hat{z}_i}}^{\bm{\hat{l}_i}} A_{\bm{l}}^* \zeta \\
&= P_{\cle} \sum_{\bm{l} \in \Nat^n; l_i = j_i + 1} \bm{\hat{z}_i}^{\bm{\hat{j}_i - \hat{l}_i}}  A_{\bm{l}}^* \zeta \\
&=  A_{(j_1, \ldots, j_{i-1}, j_{i}+1, j_{i+1}, \ldots, j_n)}^* \zeta.
\end{align*}
Similarly, 
\[
P_{\cle}  T_{z_i \Psi^*} \bm{z}^{\bm{k}}  \eta = P_{\cle}  T_{z_i^{k_i+1} \Psi^*} \bm{\hat{z}_i}^{\bm{\hat{k}_i}} \eta = B_{(k_1,\ldots, k_{i-1}, k_i+1,k_{i+1}, \ldots, k_n)}^* \eta.
\]
Hence, 
\begin{align*}
&\langle P_{\cle}  T_{z_i \Psi^*} \bm{z}^{\bm{k}} \eta, P_{\cle}  T_{z_i \Gamma^*} \bm{z}^{\bm{j}} \zeta \rangle \\
&= \langle B_{(k_1,\ldots, k_{i-1}, k_i+1,k_{i+1}, \ldots, k_n)}^* \eta, A_{(j_1,\ldots, j_{i-1}, j_i+1,j_{i+1}, \ldots, j_n)}^* \zeta \rangle,
\end{align*}
for any $\eta, \zeta \in \cle$. Therefore, for any $i \in \{1,\ldots,n\}$, we get $T_{\overline{z_i} \Gamma} P_{\cle} T_{z_i \Psi^*}=0$ if and only if 
\[
A_{(j_1,\ldots, j_{i-1}, j_i+1,j_{i+1}, \ldots, j_n)} B_{(k_1,\ldots, k_{i-1}, k_{i+1},k_{i+1}, \ldots, k_n)}^* = 0,
\]
for all $\bm{j}, \bm{k} \in \Nat^n$. Using Lemma \ref{toepcondn1} and Lemma \ref{toepcondn2}, we get $M_{\Gamma}M_{\Psi}^*$ is a Toeplitz operator if and only if 
\[
A_{\bm{l}+e_{i}} B_{\bm{m}+e_i}^* = 0.
\]
for all $i \in \{1,\ldots,n\}$, and $\bm{l}, \bm{m} \in \Nat^n$.  
This completes the proof.
\end{proof}
We need the following result for the sequel.
\begin{lemma}\label{rule2}
Let $\clf, \cle$ be Hilbert spaces and $\Gamma(\bm{z}), \Psi(\bm{z})$ be $\clb(\clf,\cle)$-valued bounded analytic functions on $\D^n$.  Then $M_{\Gamma} M_{\Psi}^* = 0$ if and only if $\Gamma(\bm{\lambda}) \Psi(\bm{\mu})^* = 0$ for all $\bm{\lambda}, \bm{\mu} \in \D^n$.
\end{lemma}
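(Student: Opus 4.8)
The plan is to bypass the Fourier-coefficient bookkeeping entirely and argue directly from the reproducing kernel structure of $H_{\cle}^2(\D^n)$. Recall that $H_{\cle}^2(\D^n)$ is a reproducing kernel Hilbert space with the Szeg\H{o} kernel $k_{\bm{w}}(\bm{z}) := \prod_{i=1}^{n}(1-\bar{w}_i z_i)^{-1}$, so that $\langle f,\, k_{\bm{w}}\ot\zeta\rangle = \langle f(\bm{w}),\zeta\rangle_{\cle}$ for every $f\in H_{\cle}^2(\D^n)$, $\bm{w}\in\D^n$, $\zeta\in\cle$, and the family $\{k_{\bm{w}}\ot\zeta : \bm{w}\in\D^n,\ \zeta\in\cle\}$ has dense linear span. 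First I would record the standard fact that each such vector is an eigenvector of $M_{\Psi}^*$, namely $M_{\Psi}^*(k_{\bm{w}}\ot\zeta) = k_{\bm{w}}\ot\Psi(\bm{w})^*\zeta$; this follows in one line from the reproducing property by testing against $g\in H_{\clf}^2(\D^n)$: $\langle g, M_{\Psi}^*(k_{\bm{w}}\ot\zeta)\rangle = \langle\Psi g, k_{\bm{w}}\ot\zeta\rangle = \langle\Psi(\bm{w})g(\bm{w}),\zeta\rangle = \langle g(\bm{w}),\Psi(\bm{w})^*\zeta\rangle = \langle g, k_{\bm{w}}\ot\Psi(\bm{w})^*\zeta\rangle$.

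Next I would apply $M_{\Gamma}$ to this identity to obtain, for all $\bm{z}\in\D^n$,
\[
\big(M_{\Gamma}M_{\Psi}^*(k_{\bm{w}}\ot\zeta)\big)(\bm{z}) \;=\; k_{\bm{w}}(\bm{z})\,\Gamma(\bm{z})\,\Psi(\bm{w})^*\zeta .
\]
Both implications then fall out. If $\Gamma(\bm{\lambda})\Psi(\bm{\mu})^* = 0$ for all $\bm{\lambda},\bm{\mu}\in\D^n$, the right-hand side vanishes identically for every $\bm{w}$ and $\zeta$, and since the vectors $k_{\bm{w}}\ot\zeta$ span a dense subspace, $M_{\Gamma}M_{\Psi}^* = 0$. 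Conversely, if $M_{\Gamma}M_{\Psi}^* = 0$, then $k_{\bm{w}}(\bm{z})\,\Gamma(\bm{z})\,\Psi(\bm{w})^*\zeta = 0$ for all $\bm{z},\bm{w}\in\D^n$ and $\zeta\in\cle$; since $k_{\bm{w}}(\bm{z})\neq 0$ on $\D^n\times\D^n$ this forces $\Gamma(\bm{z})\,\Psi(\bm{w})^*\zeta = 0$, i.e. $\Gamma(\bm{\lambda})\,\Psi(\bm{\mu})^* = 0$ for all $\bm{\lambda},\bm{\mu}\in\D^n$.

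The closest thing to an obstacle is the eigenvector identity for $M_{\Psi}^*$ (equivalently, that $M_{\Psi}$ is a well-defined bounded operator $H_{\clf}^2(\D^n)\to H_{\cle}^2(\D^n)$ acting on kernels as claimed), which is guaranteed here because $\Psi\in H_{\clb(\clf,\cle)}^{\infty}(\D^n)$ and is entirely standard reproducing-kernel theory, so it can simply be quoted. If one prefers to remain within the Fourier-coefficient language used for Proposition \ref{main2}, there is a parallel route: writing $\Gamma=\sum_{\bm{l}}A_{\bm{l}}\bm{z}^{\bm{l}}$ and $\Psi=\sum_{\bm{m}}B_{\bm{m}}\bm{z}^{\bm{m}}$, comparison of monomials shows that $\Gamma(\bm{\lambda})\Psi(\bm{\mu})^*\equiv 0$ on $\D^n\times\D^n$ is equivalent to $A_{\bm{l}}B_{\bm{m}}^* = 0$ for all $\bm{l},\bm{m}\in\Nat^n$; and since $M_{\Psi}^*(\bm{z}^{\bm{k}}\ot\eta)=\sum_{\bm{m}\le\bm{k}}\bm{z}^{\bm{k}-\bm{m}}B_{\bm{m}}^*\eta$, evaluating $M_{\Gamma}M_{\Psi}^*$ on the monomials $\bm{z}^{\bm{k}}\ot\eta$ and reading off coefficients, with a short induction on $|\bm{m}|$ to peel off the lower-order terms, shows that $M_{\Gamma}M_{\Psi}^* = 0$ is likewise equivalent to the same vanishing conditions on the products $A_{\bm{l}}B_{\bm{m}}^*$. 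Either approach closes the argument.
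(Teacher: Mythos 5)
Your argument is correct and coincides with the paper's own proof: both rest on the identity $M_{\Psi}^*(k_{\bm{\mu}}\zeta)=k_{\bm{\mu}}\Psi(\bm{\mu})^*\zeta$, the totality of the kernel vectors $k_{\bm{\mu}}\zeta$ in $H_{\cle}^2(\D^n)$, and the non-vanishing of the Szeg\H{o} kernel $k_{\bm{\mu}}(\bm{\lambda})$ on $\D^n\times\D^n$. The paper phrases it by pairing against $k_{\bm{\lambda}}\eta$ rather than evaluating pointwise, but this is the same computation, so no further comment is needed.
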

\begin{proof}
It is well known that $\{k_{\bm{\mu}}(\cdot) \eta: \eta \in \cle, \bm{\mu} \in \D^n\}$ is a total set in $H_{\cle}^2(\D^n)$ and also, the Szeg\"o kernels $k_{\bm{\mu}}(\bm{\lambda})$ are never zero for any $\bm{\lambda}, \bm{\mu} \in \D^n$. Hence, the following identity
\[
\langle M_{\Gamma} M_{\Psi}^* k_{\bm{\mu}} \zeta, k_{\bm{\lambda}} \eta \rangle_{H_{\cle}^2(\D^n)} = \langle k_{\bm{\mu}} \Psi(\bm{\mu})^* \zeta, k_{\bm{\lambda}} \Gamma(\bm{\lambda})^* \eta \rangle_{H_{\cle}^2(\D^n)} = k_{\bm{\mu}}(\bm{\lambda}) \langle \Gamma(\bm{\lambda}) \Psi(\bm{\mu})^* \zeta,  \eta \rangle_{\cle},
\]
for any $\bm{\lambda}, \bm{\mu} \in \D^n$ and $\eta, \zeta \in \cle$, shows that $M_{\Gamma} M_{\Psi}^* = 0$ if and only if $ \Gamma(\bm{\lambda}) \Psi(\bm{\mu})^* = 0$ for all $\bm{\lambda}, \bm{\mu} \in \D^n$. This completes the proof.
\end{proof}
We are now ready to establish the main result of this section.
\begin{proof}[Proof of Theorem \ref{prod_mains}]
From Proposition \ref{main2}, we know that $M_{\Gamma} M_{\Psi}^*$ is a Toeplitz operator if and only if  $A_{\bm{l}+e_{i}} B_{\bm{m}+e_i}^* = 0$ for all $i \in \{1,\ldots,n\}$, and $\bm{l}, \bm{m} \in \Nat^n$, where 
\[
\Gamma(\bm{z}):= \sum_{\bm{l} \in \Nat^n} A_{\bm{l}} \bm{z}^{\bm{l}} ; \quad \Psi(\bm{z}):= \sum_{\bm{m} \in \Nat^n} B_{\bm{m}} \bm{z}^{\bm{m}} .
\]
Let us begin by assuming $M_{\Gamma} M_{\Psi}^*$ is a Toeplitz operator, equivalently, the above conditions hold for all $i \in \{1,\ldots,n\}$, and $\bm{l}, \bm{m} \in \Nat^n$,
then for any $k \in \{1,\ldots,n\}$, we get
\begin{equation*}
\begin{split}
&\big( \Gamma(\bm{\lambda}) - \Gamma_k(\bm{\lambda}) \big) \big( \Psi(\bm{\mu}) - \Psi_k(\bm{\mu})\big)^*\\
&=\Big(\Gamma\big((\lambda_1,\ldots,\lambda_n)\big) - \Gamma\big((\lambda_{1},\ldots,\lambda_{k-1}, 0, \lambda_{k+1},\ldots,\lambda_n)\big) \Big)\\
&\cdot \Big( \Psi\big((\mu_{1},\ldots,\mu_n)\big) - \Psi\big((\mu_{1},\ldots,\mu_{k-1}, 0, \mu_{k+1},\ldots,\mu_n) \big)\Big)^* \\
&= \sum_{\substack{\bm{l} \in \Nat^n; \\ l_{k} \neq 0}}  \bm{\lambda}^{\bm{l}} A_{\bm{l}}  \sum_{\substack{\bm{m} \in \Nat^n; \\ m_{k} \neq 0}} B_{\bm{m}}^* \bar{\bm{\mu}}^{\bm{m}} =0.
\end{split}
\end{equation*}
Now, we need to prove the converse direction. So, let us suppose
\[
\big( \Gamma(\bm{\lambda}) - \Gamma_k(\bm{\lambda}) \big) \big( \Psi(\bm{\mu}) - \Psi_k(\bm{\mu})\big)^*=0,
\]
for all $k \in \{1,\ldots,n\}$ and $\bm{\lambda}, \bm{\mu} \in \D^n$.  Using Lemma \ref{rule2}, we can conclude that the above conditions are equivalent to
\[
(M_{\Gamma} - M_{\Gamma_k}) (M_{\Psi} - M_{\Psi_k})^* = 0.
\]
Hence, for all $k \in \{1,\ldots,n\}$, we have
\[
M_{\Gamma}M_{\Psi}^* = M_{\Gamma_k} M_{\Psi}^* + M_{\Gamma} M_{\Psi_k}^* - M_{\Gamma_k}M_{\Psi_k}^*.
\]
Therefore, for all $k \in \{1,\ldots,n\}$,
\begin{align*}
M_{z_k}^* M_{\Gamma}M_{\Psi}^* M_{z_k} &= M_{z_k}^* M_{\Gamma_k} M_{\Psi}^* M_{z_k} + M_{z_k}^*M_{\Gamma} M_{\Psi_k}^* M_{z_k} - M_{z_k}^*M_{\Gamma_k}M_{\Psi_k}^* M_{z_k}\\
&= M_{\Gamma_k} M_{z_k}^* M_{\Psi}^* M_{z_k} + M_{z_k}^*M_{\Gamma} M_{z_k} M_{\Psi_k}^*  - M_{\Gamma_k} M_{z_k}^* M_{z_k} M_{\Psi_k}^* \\
&= M_{\Gamma_k} M_{\Psi}^* M_{z_k}^* M_{z_k} + M_{z_k}^* M_{z_k} M_{\Gamma} M_{\Psi_k}^* - M_{\Gamma_k} M_{z_k}^* M_{z_k} M_{\Psi_k}^* \\
&= M_{\Gamma_k} M_{\Psi}^* + M_{\Gamma} M_{\Psi_k}^* - M_{\Gamma_k} M_{\Psi_k}^*\\
&= M_{\Gamma} M_{\Psi}^*.
\end{align*}
The second and third equality follow from the fact that for any $k \in \{1,\ldots,n\}$, the functions $\Gamma_k, \Psi_k$ do not depend on the $z_k$-variable. Using Theorem \ref{Toep_char}, it is clear that the above set of identities will imply that $M_{\Gamma} M_{\Psi}^*$ must be a Toeplitz operator. This completes the proof.
\end{proof}
It is natural to ask what will be the symbol of the Toeplitz operator in the above case.  Let us start with the decomposition of $M_{\Gamma}M_{\Psi}^*$ in the case of $n=1$.
\begin{proof}[Proof of Theorem \ref{simple}]
From Theorem \ref{prod_mains} it is clear that $(i) \iff (ii)$.  Now suppose that condition $(ii)$ is true. By Lemma \ref{rule2}, condition $(ii)$ is equivalent to the following identity. 
\[
\big( M_{\Gamma} - M_{\Gamma(0)}\big) \big( M_{\Psi} - M_{\Psi(0)}\big)^* = 0.
\]
The above identity is again equivalent to the following decomposition.
\begin{align*}
M_{\Gamma} M_{\Psi}^* &=  M_{\Gamma}M_{\Psi(0)}^* + M_{\Gamma(0)}M_{\Psi}^* + M_{\Gamma(0)}M_{\Psi(0)}^*\\
&= M_{\Gamma} \Psi(0)^* + \Gamma(0) M_{\Psi}^* + \Gamma(0) \Psi(0)^*,
\end{align*}
It follows that $(ii) \iff (iii)$.  Also, the above decomposition directly implies that $M_{\Gamma} M_{\Psi}^*$ is a Toeplitz operator since the first, second and third terms on the right-hand side are analytic, co-analytic and constant Toeplitz operators, respectively.  This completes the proof.
\end{proof}
\begin{rem}\label{interes}
The above results unearths a surprising consequence: \textsf{if $\underset{l \in \Nat}{\sum} z^l A_{l}$ and $\underset{m \in \Nat}{\sum} z^m B_{m}$ are operator-valued bounded analytic functions on $\D$, then 
$
\big(\underset{\substack{l \in \Nat; \\ l \neq 0}}{\sum} {\lambda}^l A_{l} \big) \big(\underset{\substack{m \in \Nat; \\ m \neq 0}}{\sum} \bar{\mu}^m B_{m}^* \big) = 0,
$
for all $\lambda, \mu \in \D$ if and only if $A_{l+1} B_{m+1}^* = 0$, for all $l,m \in \Nat$.
}
In particular, the product of two series is zero if and only if certain term-wise coefficients are zero.
\end{rem}
For stating the theorem in cases when $n>1$, we need to set the following notation: \textsf{given any $k \in \{1,\ldots,n\}$, let $\{i_1,\ldots,i_k\} \subseteq \{1,\ldots,n\}$ be a non-empty subset of distinct elements in the ascending order. Given any $\Theta \in H_{\clb(\cle)}^{\infty}(\D^n)$, we denote the function which has $0$'s in the $\{i_1,\ldots,i_k\}$-coordinates by $\Theta_{i_1, \ldots, i_k}(\bm{z})$. For instance, $\Theta_{1, \ldots, n}(\bm{z}) = \Theta(0, \ldots,0)$ and $\Theta_{1,3,n}(\bm{z}) = \Theta(0,z_2,0,z_4,\ldots,z_{n-1}, 0)$, and $\Theta_{5,7,9}(\bm{z}) = \Theta(z_1,\ldots,z_4,0,z_6,0,z_8,0,\z_{10},\ldots,z_{n})$}. We also need the following result in the sequel.
\begin{lemma}\label{rule}
Let $\clf, \cle$ be Hilbert spaces and $\Gamma(\bm{z}), \Psi(\bm{z})$ be $\clb(\clf,\cle)$-valued bounded analytic functions on $\D^n$.  Furthermore, let $A,B \subseteq \{1,\ldots,n\}$ such that $A \cup B = \{1,\ldots,n\}$. Then $M_{\Gamma_A} M_{\Psi_B}^*$ is always a Toeplitz operator on $H_{\cle}^2(\D^n)$.
\end{lemma}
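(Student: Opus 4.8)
The plan is to invoke the algebraic characterization of Toeplitz operators from Theorem \ref{Toep_char}: it suffices to check that
\[
M_{z_k}^* \, \big( M_{\Gamma_A} M_{\Psi_B}^* \big) \, M_{z_k} = M_{\Gamma_A} M_{\Psi_B}^*
\]
for every $k \in \{1,\ldots,n\}$. The structural point driving the argument is that, since $A \cup B = \{1,\ldots,n\}$, each index $k$ lies in $A$ or in $B$ (possibly in both), and in either case one of the two factors $M_{\Gamma_A}$, $M_{\Psi_B}^*$ is blind to the variable $z_k$.

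First I would record two elementary facts. (i)~For any $\Theta \in H_{\clb(\clf,\cle)}^{\infty}(\D^n)$ the multiplication operator $M_{\Theta}$ commutes with every $M_{z_j}$, hence $M_{\Theta}^*$ commutes with every $M_{z_j}^*$; since each $M_{z_k}$ is an isometry this yields $M_{z_k}^* M_{\Theta} M_{z_k} = M_{z_k}^* M_{z_k} M_{\Theta} = M_{\Theta}$ and, symmetrically, $M_{z_k}^* M_{\Theta}^* M_{z_k} = M_{\Theta}^* M_{z_k}^* M_{z_k} = M_{\Theta}^*$ --- this is just the well-known fact that analytic and co-analytic Toeplitz operators are Toeplitz. (ii)~If, in addition, $\Theta$ does not depend on the variable $z_k$, then identifying $H_{\cle}^2(\D^n)$ with the Hardy space in the single variable $z_k$ tensored with the $\cle$-valued Hardy space in the remaining $n-1$ variables, $M_{\Theta}$ becomes $I \otimes M_{\Theta}'$ while $M_{z_k}$ becomes $M_z \otimes I$; consequently $M_{\Theta}$, and hence $M_{\Theta}^*$, commutes with both $M_{z_k}$ and $M_{z_k}^*$. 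I would also note here that freezing the $A$-coordinates of $\Gamma$ at $0$ produces a bounded analytic $\clb(\clf,\cle)$-valued function $\Gamma_A$ that is independent of each $z_k$ with $k \in A$, so that $M_{\Gamma_A}$ is a well-defined bounded operator; symmetrically for $\Psi_B$ and $B$.

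Then I would split into cases for a fixed $k$. If $k \in A$, fact (ii) applied to $\Gamma_A$ lets me commute $M_{\Gamma_A}$ past $M_{z_k}^*$, and fact (i) disposes of the co-analytic factor:
\[
M_{z_k}^* M_{\Gamma_A} M_{\Psi_B}^* M_{z_k} = M_{\Gamma_A} \big( M_{z_k}^* M_{\Psi_B}^* M_{z_k} \big) = M_{\Gamma_A} M_{\Psi_B}^*.
\]
If instead $k \in B$, fact (ii) applied to $\Psi_B$ lets me commute $M_{\Psi_B}^*$ past $M_{z_k}$, and fact (i) disposes of the analytic factor:
\[
M_{z_k}^* M_{\Gamma_A} M_{\Psi_B}^* M_{z_k} = \big( M_{z_k}^* M_{\Gamma_A} M_{z_k} \big) M_{\Psi_B}^* = M_{\Gamma_A} M_{\Psi_B}^*.
\]
Since every $k \in \{1,\ldots,n\}$ falls into at least one of the two cases, these identities hold for all $k$, and Theorem \ref{Toep_char} then yields that $M_{\Gamma_A} M_{\Psi_B}^*$ is a Toeplitz operator.

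I do not expect a genuine obstacle here: the only point needing a little care is the bookkeeping behind fact (ii), namely making precise that $\Gamma_A$ is genuinely independent of the variables indexed by $A$ and that this forces $M_{\Gamma_A}$ to commute with the corresponding $M_{z_k}^*$. If one prefers to avoid the tensor-product picture, the same commutation can be read off from the power series: writing $\Gamma_A = \sum_{\bm{l}} A_{\bm{l}}' \bm{z}^{\bm{l}}$, the coefficients $A_{\bm{l}}'$ vanish whenever $l_k \neq 0$ for some $k \in A$, and this coefficient vanishing is exactly what makes $M_{\Gamma_A}$ commute with $M_{z_k}^*$ for such $k$ (and similarly for $\Psi_B$).
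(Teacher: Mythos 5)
Your proposal is correct and follows essentially the same route as the paper: verify the Brown--Halmos-type criterion of Theorem \ref{Toep_char} for each $k$, splitting into the cases $k \in A$ and $k \in B$, and in each case use that the factor whose symbol is independent of $z_k$ doubly commutes with $M_{z_k}$ while the other factor is handled by the usual analytic/co-analytic Toeplitz identity $M_{z_k}^* M_{z_k} = I$. The extra justifications you sketch (tensor-product picture or coefficient vanishing) are fine but not needed beyond what the paper already asserts.
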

\begin{proof}
Suppose $i \in A$, then $\Gamma_A(\bm{z})$ is not dependent on the variable $i$. Hence,
\[
M_{z_i}^* M_{\Gamma_A} M_{\Psi_B}^* M_{z_i} =  M_{\Gamma_A} M_{z_i}^* M_{\Psi_B}^* M_{z_i} = M_{\Gamma_A} M_{\Psi_B}^* M_{z_i}^*  M_{z_i} = M_{\Gamma_A} M_{\Psi_B}^*.
\]
Now let us suppose $i \in B$. For a similar reasoning as above,
\[
M_{z_i}^* M_{\Gamma_A} M_{\Psi_B}^* M_{z_i} =  M_{z_i}^* M_{\Gamma_A} M_{z_i} M_{\Psi_B}^* = M_{z_i}^* M_{z_i} M_{\Gamma_A} M_{\Psi_B}^* = M_{\Gamma_A} M_{\Psi_B}^*.
\]
From our assumption $A \cup B = \{1,\ldots,n\}$,  it implies that $M_{z_i}^* M_{\Gamma_A} M_{\Psi_B}^* M_{z_i}= M_{\Gamma_A} M_{\Psi_B}^*$ for all $i \in \{1,\ldots,n\}$. Thus, by using Theorem \ref{Toep_char}  we can conclude that $M_{\Gamma_A}M_{\Psi_B}^*$ is a Toeplitz operator. This completes the proof.
\end{proof}
We are now ready to state the main theorem in the case of the bidisc.
\begin{thm}\label{bidisc}
Let $\clf, \cle$ be Hilbert spaces and $\Gamma(\bm{z}), \Psi(\bm{z})$ be $\clb(\clf,\cle)$-valued bounded analytic functions on $\D^2$. Then the following are equivalent \\ \vspace{1mm}
$(i)$~$M_{\Gamma}M_{\Psi}^*$ is a Toeplitz operator on $H_{\cle}^2(\D^2)$, \\ \vspace{1mm}
$(ii)$~$\big( \Gamma(\bm{\lambda}) - \Gamma_k(\bm{\lambda})\big) \big(  \Psi(\bm{\mu}) - \Psi_k(\bm{\mu}) \big)^* = 0$ for all $\bm{\lambda}, \bm{\mu} \in \D^2$ and $k=1,2$,\\ \vspace{1mm}
$(iii)$~$M_{\Gamma}M_{\Psi}^*$ admits the following decomposition,
\begin{align*}
M_{\Gamma}M_{\Psi}^*  &=  M_{\Gamma} \Psi(\bm{0})^*\\
&+M_{\Gamma_1} M_{\Psi_2}^* - M_{\Gamma_1} \Psi(\bm{0})^*\\
&+M_{\Gamma_2} M_{\Psi_1}^*  - M_{\Gamma_2} \Psi(\bm{0})^* \\
& + \Gamma(\bm{0}) M_{\Psi}^* + \Gamma(\bm{0}) M_{\Psi_1}^* + \Gamma(\bm{0}) M_{\Psi_2}^*- \Gamma(\bm{0}) \Psi(\bm{0})^*.
\end{align*}
\end{thm}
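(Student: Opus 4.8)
The plan is to take $(i)\Leftrightarrow(ii)$ as already settled, since it is exactly Theorem~\ref{prod_mains} specialized to $n=2$, and to prove $(ii)\Rightarrow(iii)$ and $(iii)\Rightarrow(i)$, which closes the loop of equivalences.

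For $(iii)\Rightarrow(i)$ one only has to observe that every summand on the right-hand side of the displayed identity is a Toeplitz operator on $H_{\cle}^2(\D^2)$. Indeed, $M_{\Gamma}\Psi(\bm{0})^*$ and $M_{\Gamma_j}\Psi(\bm{0})^*$ are multiplication operators (analytic symbols), $\Gamma(\bm{0})M_{\Psi}^*$ and $\Gamma(\bm{0})M_{\Psi_j}^*$ are adjoints of multiplication operators (co-analytic symbols), $\Gamma(\bm{0})\Psi(\bm{0})^*$ is constant, and the two mixed terms $M_{\Gamma_1}M_{\Psi_2}^*$ and $M_{\Gamma_2}M_{\Psi_1}^*$ are Toeplitz by Lemma~\ref{rule}, because $\Gamma_1$ depends only on $z_2$, $\Psi_2$ only on $z_1$, and $\{1\}\cup\{2\}=\{1,2\}$ (and symmetrically). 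A finite sum of Toeplitz operators is Toeplitz, so $(i)$ follows; alternatively one can verify $M_{z_i}^*(\cdot)M_{z_i}=(\cdot)$ term by term using Theorem~\ref{Toep_char}.

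The substance is $(ii)\Rightarrow(iii)$, and the engine is the elementary rewriting: whenever $(M_X-M_{X'})(M_Y-M_{Y'})^*=0$, one has $M_XM_Y^*=M_{X'}M_Y^*+M_XM_{Y'}^*-M_{X'}M_{Y'}^*$. First apply this with $(X,X',Y,Y')=(\Gamma,\Gamma_1,\Psi,\Psi_1)$ — legitimate by the $k=1$ instance of $(ii)$ together with Lemma~\ref{rule2} — to expand $M_{\Gamma}M_{\Psi}^*$ as $M_{\Gamma_1}M_{\Psi}^*+M_{\Gamma}M_{\Psi_1}^*-M_{\Gamma_1}M_{\Psi_1}^*$. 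Then apply the same move to each of these three terms, now ``in the second variable,'' using three auxiliary relations, each obtained from the $k=2$ instance of $(ii)$, written pointwise via Lemma~\ref{rule2} as $\big(\Gamma(\bm{\lambda})-\Gamma_2(\bm{\lambda})\big)\big(\Psi(\bm{\mu})-\Psi_2(\bm{\mu})\big)^*=0$, simply by specializing coordinates to $0$ and lifting back to operators: setting $\lambda_1=0$ gives $(M_{\Gamma_1}-M_{\Gamma(\bm{0})})(M_{\Psi}-M_{\Psi_2})^*=0$; setting $\mu_1=0$ gives $(M_{\Gamma}-M_{\Gamma_2})(M_{\Psi_1}-M_{\Psi(\bm{0})})^*=0$; setting both $\lambda_1=\mu_1=0$ gives $(M_{\Gamma_1}-M_{\Gamma(\bm{0})})(M_{\Psi_1}-M_{\Psi(\bm{0})})^*=0$. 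Substituting the three resulting three-term expressions into the first expansion and collecting like terms produces the nine-term decomposition in $(iii)$.

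The only delicate point is bookkeeping. One must (a) make sure each auxiliary ``product equals zero'' relation really is a consequence of $(ii)$ — the coordinate-specialization trick above handles all of them uniformly, and the same principle (that $\Gamma_A(\bm{\lambda})$ is $\Gamma(\bm{\lambda})$ with the $A$-coordinates of $\bm{\lambda}$ set to $0$) is what will drive the $n=3$ and general cases — and (b) keep track of signs through the expansion, where the constant-type contributions $\Gamma(\bm{0})M_{\Psi_j}^*$ are the easiest to mishandle. It is worth checking the final identity against a scalar toy example such as $\Gamma\equiv 1$, $\Psi(z_1,z_2)=z_1+z_2$ (for which $M_{\Gamma}M_{\Psi}^*=M_{\Psi}^*$). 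No analytic input beyond Theorem~\ref{prod_mains}, Lemma~\ref{rule}, Lemma~\ref{rule2}, and Theorem~\ref{Toep_char} is needed.
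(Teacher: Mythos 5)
Your plan is, step for step, the paper's own proof: $(i)\Leftrightarrow(ii)$ is quoted from Theorem \ref{prod_mains}, $(iii)\Rightarrow(i)$ is done exactly as in the paper via Lemma \ref{rule} (or a term-by-term check with Theorem \ref{Toep_char}), and for $(ii)\Rightarrow(iii)$ you use the same expansion engine (Lemma \ref{rule2} plus the three-term rewriting) and the very same three auxiliary relations the paper extracts from the $k=2$ case of $(ii)$ by specializing $\lambda_1=0$, $\mu_1=0$, and $\lambda_1=\mu_1=0$, then substitutes into the $k=1$ expansion. So in method there is nothing missing and nothing new.

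One substantive caveat, at exactly the spot you flagged as easiest to mishandle: carrying out the collection faithfully does \emph{not} give the display in $(iii)$ as printed, but rather
\begin{align*}
M_{\Gamma}M_{\Psi}^*  &=  M_{\Gamma} \Psi(\bm{0})^* + M_{\Gamma_1} M_{\Psi_2}^* - M_{\Gamma_1} \Psi(\bm{0})^* + M_{\Gamma_2} M_{\Psi_1}^*  - M_{\Gamma_2} \Psi(\bm{0})^* \\
&\quad + \Gamma(\bm{0}) M_{\Psi}^* - \Gamma(\bm{0}) M_{\Psi_1}^* - \Gamma(\bm{0}) M_{\Psi_2}^* + \Gamma(\bm{0}) \Psi(\bm{0})^*,
\end{align*}
i.e.\ the last three signs come out opposite to those displayed in the theorem. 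The paper's own intermediate pointwise identity (after substituting the three auxiliary relations) has precisely these corrected signs; the flip happens only in the final transcription to operator form, so the printed $(iii)$ carries a sign typo. Your suggested sanity check detects it: for $\Gamma\equiv 1$, $\Psi(z_1,z_2)=z_1+z_2$, the left-hand side is $M_{\Psi}^*$, the corrected right-hand side is $M_{\Psi}^*$, while the printed right-hand side evaluates to $3M_{\Psi}^*$ (the two versions differ by $2\,\Gamma(\bm{0})\big(M_{\Psi_1}+M_{\Psi_2}-\Psi(\bm{0})\big)^*$, which does not vanish under $(ii)$). So where you say the collection ``produces the nine-term decomposition in $(iii)$'', it in fact produces the corrected version; with that adjustment your argument is complete, and the converse direction is unaffected since any signed sum of operators $M_{\Gamma_A}M_{\Psi_B}^*$ with $A\cup B=\{1,2\}$ is Toeplitz by Lemma \ref{rule}.
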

\begin{proof}
From Theorem \ref{prod_mains}, it is clear that $(i) \iff (ii)$.  Now suppose that condition $(ii)$ is true.  We will show that $M_{\Gamma}M_{\Psi}^*$ can be decomposed into a sum of elementary Toeplitz operators.  For this purpose, let us start with condition $(ii)$ when $k=1$, that is, 
\[
\big( \Gamma(\bm{\lambda}) - \Gamma_1(\bm{\lambda})\big) \big(  \Psi(\bm{\mu}) - \Psi_1(\bm{\mu}) \big)^* = 0.
\]
It implies that 
\begin{equation}\label{ca1}
 \Gamma(\bm{\lambda})\Psi(\bm{\mu})^* = \Gamma(\bm{\lambda})\Psi_1(\bm{\mu})^* + \Gamma_1({\bm{\lambda}})\Psi(\bm{\mu})^* - \Gamma_1(\bm{\lambda})\Psi_1(\bm{\mu})^*
\end{equation}
Again using condition $(ii)$, when $k=2$ and $\lambda_1 = 0$, we get
\[
\big( \Gamma_1(\bm{\lambda}) - \Gamma(\bm{0})\big) \big(  \Psi(\bm{\mu}) - \Psi_2(\bm{\mu}) \big)^* = 0.
\]
Hence, we get the following decomposition.
\[
\Gamma_1(\bm{\lambda})\Psi(\bm{\mu})^* =  \Gamma(\bm{0})\Psi(\bm{\mu})^* + \Gamma_{1}(\bm{\lambda})\Psi_2(\bm{\mu})^*- \Gamma(\bm{0})\Psi_2(\bm{\mu})^*.
\]
Since the above condition is assumed to be true for all $\bm{\lambda}, \bm{\mu}$ in $\D^2$, we can put $\mu_1 = 0$ to get
\[
 \Gamma_1(\bm{\lambda})\Psi_1(\bm{\mu})^* =  \Gamma(\bm{0})\Psi_1(\bm{\mu})^* +  \Gamma_1(\bm{\lambda}) \Psi(\bm{0})^* - \Gamma(\bm{0}) \Psi(\bm{0})^* .
\]
Incorporating the above decompositions in condition (\ref{ca1}), we get
\begin{equation}\label{ca3}
\begin{split}
 \Gamma(\bm{\lambda})\Psi(\bm{\mu})^* &= \Gamma(\bm{\lambda})\Psi_1(\bm{\mu})^* 
+\Gamma(\bm{0})\Psi(\bm{\mu})^* + \Gamma_{1}(\bm{\lambda})\Psi_2(\bm{\mu})^*- \Gamma(\bm{0})\Psi_2(\bm{\mu})^*\\
&-\Gamma(\bm{0})\Psi_1(\bm{\mu})^* - \Gamma_1(\bm{\lambda}) \Psi(\bm{0})^* + \Gamma(\bm{0}) \Psi(\bm{0})^* .
\end{split}
\end{equation}
Using condition $(ii)$,when $k=2$ and $\mu_1 = 0$, we get
\[
\big(\Gamma(\bm{\lambda}) - \Gamma_{2}(\bm{\lambda})\big) \big( \Psi_1({\bm{\mu}}) - \Psi(\bm{0}) \big)^* = 0.
\]
This gives the following decomposition.
\[
\Gamma(\bm{\lambda})\Psi_1({\bm{\mu}})^* = \Gamma(\bm{\lambda}) \Psi(\bm{0})^* 
+\Gamma_{2}(\bm{\lambda}) \Psi_1({\bm{\mu}})^* 
- \Gamma_{2}(\bm{\lambda})\Psi(\bm{0})^*.
\]
Using the above decomposition in equation (\ref{ca3}), we get
\begin{equation*}
\begin{split}
 \Gamma(\bm{\lambda})\Psi(\bm{\mu})^* &= \Gamma(\bm{\lambda}) \Psi(\bm{0})^* 
+\Gamma_{2}(\bm{\lambda}) \Psi_1({\bm{\mu}})^* 
- \Gamma_{2}(\bm{\lambda})\Psi(\bm{0})^*
+\Gamma (\bm{0}) \Psi(\bm{\mu})^* 
+ \Gamma_{1}(\bm{\lambda})\Psi_2(\bm{\mu})^*\\
&- \Gamma(\bm{0})\Psi_2(\bm{\mu})^*
- \Gamma(\bm{0})\Psi_1(\bm{\mu})^* 
-\Gamma_1(\bm{\lambda}) \Psi(\bm{0})^* 
+\Gamma(\bm{0})\Psi(\bm{0})^* .
\end{split}
\end{equation*}
Since the above is true for all $\bm{\lambda}, \bm{\mu} \in \D^2$, we get for any $\eta, \zeta \in \cle$, 
\begin{align*}
\langle M_{\Gamma}M_{\Psi}^* k_{\bm{\mu}} \eta, k_{\bm{\lambda}} \zeta \rangle &= k_{\bm{\mu}}(\bm{\lambda}) \langle  \Gamma(\bm{\lambda})\Psi(\bm{\mu})^*  \eta,  \zeta \rangle \\
&= k_{\bm{\mu}}(\bm{\lambda}) \Big( \langle \Gamma(\bm{\lambda})  \Psi(\bm{0})^* \eta,  \zeta \rangle
+ \langle \Gamma_{2}(\bm{\lambda}) \Psi_1({\bm{\mu}})^* \eta,  \zeta \rangle
- \langle \Gamma_{2}(\bm{\lambda})\Psi(\bm{0})^* \eta,  \zeta \rangle\\
&+ \langle \Gamma(\bm{0})\Psi(\bm{\mu})^* \eta,  \zeta \rangle 
+ \langle \Gamma_{1}(\bm{\lambda})\Psi_2(\bm{\mu})^* \eta,  \zeta \rangle - \langle \Gamma(\bm{0})\Psi_2(\bm{\mu})^* \eta,  \zeta \rangle \\
&- \langle \Gamma(\bm{0})\Psi_1(\bm{\mu})^* \eta,  \zeta \rangle 
- \langle \Gamma_1(\bm{\lambda}) \Psi(\bm{0})^* \eta,  \zeta \rangle
+ \langle \Gamma(\bm{0})\Psi(\bm{0})^* \eta,  \zeta \rangle \Big) \\
&=  \langle M_{\Gamma} \Psi(\bm{0})^* k_{\bm{\mu}} \eta, k_{\bm{\lambda}} \zeta \rangle\\
&+\langle M_{\Gamma_1} M_{\Psi_2}^* k_{\bm{\mu}} \eta, k_{\bm{\lambda}} \zeta \rangle - \langle M_{\Gamma_1} \Psi(\bm{0})^* k_{\bm{\mu}} \eta, k_{\bm{\lambda}} \zeta \rangle\\
&+ \langle M_{\Gamma_2} M_{\Psi_1}^*k_{\bm{\mu}} \eta, k_{\bm{\lambda}} \zeta \rangle  - M_{\Gamma_2} \Psi(\bm{0})^*k_{\bm{\mu}} \eta, k_{\bm{\lambda}} \zeta \rangle \\
& + \langle \Gamma(\bm{0}) M_{\Psi}^*k_{\bm{\mu}} \eta, k_{\bm{\lambda}} \zeta \rangle + \langle \Gamma(\bm{0}) M_{\Psi_1}^*k_{\bm{\mu}} \eta, k_{\bm{\lambda}} \zeta \rangle \\
&+ \langle \Gamma(\bm{0}) M_{\Psi_2}^*k_{\bm{\mu}} \eta, k_{\bm{\lambda}} \zeta \rangle - \langle \Gamma(\bm{0}) \Psi(\bm{0})^*k_{\bm{\mu}} \eta, k_{\bm{\lambda}} \zeta \rangle,
\end{align*}
and therefore,
\begin{align*}
M_{\Gamma}M_{\Psi}^*  &=  M_{\Gamma} \Psi(\bm{0})^*\\
&+M_{\Gamma_1} M_{\Psi_2}^* - M_{\Gamma_1} \Psi(\bm{0})^*\\
&+M_{\Gamma_2} M_{\Psi_1}^*  - M_{\Gamma_2} \Psi(\bm{0})^* \\
& + \Gamma(\bm{0}) M_{\Psi}^* + \Gamma(\bm{0}) M_{\Psi_1}^* + \Gamma(\bm{0}) M_{\Psi_2}^*- \Gamma(\bm{0}) \Psi(\bm{0})^*.
\end{align*}
This completes the proof for the direction $(ii) \implies (iii)$. For the direction $(iii) \implies (i)$, it is easy to observe using Lemma \ref{rule} that all the operators on the right-hand side of the above decomposition are Toeplitz operators. Thus, $M_{\Gamma}M_{\Psi}^*$ must be a Toeplitz operator as well. This completes the proof.
\end{proof}

In the case of $n=3$, we get the following characterization.

\begin{thm}\label{tridisc}
Let $\clf, \cle$ be Hilbert spaces and $\Gamma(\bm{z}), \Psi(\bm{z})$ be $\clb(\clf,\cle)$-valued bounded analytic functions on $\D^3$. Then the following are equivalent \\ \vspace{1mm}
$(i)$~$M_{\Gamma}M_{\Psi}^*$ is a Toeplitz operator on $H_{\cle}^2(\D^3)$, \\ \vspace{1mm}
$(ii)$~$\big( \Gamma(\bm{\lambda}) - \Gamma_k(\bm{\lambda})\big) \big(  \Psi(\bm{\mu}) - \Psi_k(\bm{\mu}) \big)^* = 0$ for all $\bm{\lambda}, \bm{\mu} \in \D^3$ and $k=1,2,3$,\\ \vspace{1mm}
$(iii)$~$M_{\Gamma}M_{\Psi}^*$ admits the following decomposition, \vspace{1mm}
\begin{align*}
M_{\Gamma}M_{\Psi}^*  &=  M_{\Gamma} \Psi(\bm{0})^*\\
&-M_{\Gamma_1} \Psi(\bm{0})^* + M_{\Gamma_1} M_{\Psi_{2,3}}^*\\
&-M_{\Gamma_2} \Psi(\bm{0})^* + M_{\Gamma_2} M_{\Psi_{1,3}}^*\\
&-M_{\Gamma_3} \Psi(\bm{0})^* + M_{\Gamma_3} M_{\Psi_{1,2}}^*\\
&+M_{\Gamma_{1,2}} \Psi(\bm{0})^*  - M_{\Gamma_{1,2}} M_{\Psi_{1,3}}^* -  M_{\Gamma_{1,2}} M_{\Psi_{2,3}}^* + M_{\Gamma_{1,2}} M_{\Psi_{3}}^*\\
&+M_{\Gamma_{1,3}} \Psi(\bm{0})^*  - M_{\Gamma_{1,3}} M_{\Psi_{1,2}}^* -  M_{\Gamma_{1,3}} M_{\Psi_{2,3}}^* + M_{\Gamma_{1,3}} M_{\Psi_{2}}^*\\
&+M_{\Gamma_{2,3}} \Psi(\bm{0})^*  - M_{\Gamma_{2,3}} M_{\Psi_{1,2}}^* - M_{\Gamma_{2,3}} M_{\Psi_{1,3}}^* + M_{\Gamma_{2,3}} M_{\Psi_1}^* \\
&+\Gamma(\bm{0}) M_{\Psi}^*  - \Gamma(\bm{0}) M_{\Psi_{1}}^* -  \Gamma(\bm{0}) M_{\Psi_{2}}^* - \Gamma(\bm{0}) M_{\Psi_{3}}^*\\
&+\Gamma(\bm{0}) M_{\Psi_{1,2}}^*  - \Gamma(\bm{0}) M_{\Psi_{1,3}}^* -  \Gamma(\bm{0}) M_{\Psi_{2,3}}^* - \Gamma(\bm{0}) \Psi(\bm{0})^*.
\end{align*}
\end{thm}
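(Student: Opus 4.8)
The plan is to run the same three-part argument already used for Theorem~\ref{bidisc}, now on $\D^3$. The equivalence $(i)\iff(ii)$ is just Theorem~\ref{prod_mains} specialized to $n=3$, so the substance is the loop $(ii)\Rightarrow(iii)\Rightarrow(i)$.

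The implication $(iii)\Rightarrow(i)$ is the routine direction: every term on the right-hand side of the displayed identity has the shape $M_{\Gamma_A}M_{\Psi_B}^*$ with $A,B\subseteq\{1,2,3\}$ and $A\cup B=\{1,2,3\}$ --- for instance $M_{\Gamma}\Psi(\bm{0})^*=M_{\Gamma_\emptyset}M_{\Psi_{1,2,3}}^*$, the term $M_{\Gamma_{1,2}}M_{\Psi_{1,3}}^*$ has $A\cup B=\{1,2,3\}$, $\Gamma(\bm{0})M_{\Psi}^*=M_{\Gamma_{1,2,3}}M_{\Psi_\emptyset}^*$, and $\Gamma(\bm{0})\Psi(\bm{0})^*=M_{\Gamma_{1,2,3}}M_{\Psi_{1,2,3}}^*$ --- so by Lemma~\ref{rule} each summand is a Toeplitz operator, hence so is the finite sum.

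For $(ii)\Rightarrow(iii)$ I would first isolate the reduction rule that drives everything. Specializing the relevant coordinates of $\bm{\lambda}$ and $\bm{\mu}$ in condition $(ii)$ (so that, since $k\notin S\cup T$, the functions $\Gamma_k,\Psi_k$ become $\Gamma_{S\cup\{k\}},\Psi_{T\cup\{k\}}$) and then upgrading the resulting function identity to an operator identity by Lemma~\ref{rule2} gives, for every $k\in\{1,2,3\}$ and all $S,T\subseteq\{1,2,3\}\setminus\{k\}$,
\[
\big(M_{\Gamma_S}-M_{\Gamma_{S\cup\{k\}}}\big)\big(M_{\Psi_T}-M_{\Psi_{T\cup\{k\}}}\big)^*=0,
\]
equivalently the ``peeling rule''
\[
M_{\Gamma_S}M_{\Psi_T}^*=M_{\Gamma_S}M_{\Psi_{T\cup\{k\}}}^*+M_{\Gamma_{S\cup\{k\}}}M_{\Psi_T}^*-M_{\Gamma_{S\cup\{k\}}}M_{\Psi_{T\cup\{k\}}}^*,\qquad k\notin S\cup T.
\]
Starting from $M_\Gamma M_\Psi^*=M_{\Gamma_\emptyset}M_{\Psi_\emptyset}^*$, I would apply this rule to any summand $M_{\Gamma_S}M_{\Psi_T}^*$ whose index set $S\cup T$ is still a proper subset of $\{1,2,3\}$, choosing any $k\notin S\cup T$; each of the three terms produced has $S\cup T$ enlarged by $\{k\}$, so the integer $3-|S\cup T|$ strictly drops and the process halts after finitely many steps with all terms satisfying $S\cup T=\{1,2,3\}$. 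Running this with a fixed elimination order (first eliminate $z_1$, then $z_2$, then $z_3$, exactly as in the proof of Theorem~\ref{bidisc}) and collecting like terms yields the stated decomposition; the final matching of coefficients is most cleanly confirmed by comparing the matrix entries $\langle\,\cdot\,k_{\bm{\mu}}\eta,k_{\bm{\lambda}}\zeta\rangle$ of both sides, as in the bidisc case.

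The only genuine difficulty is combinatorial: the number of reducible terms is considerably larger than for $n=2$, and tracking signs and multiplicities through the iteration accurately enough to land on exactly the closed form displayed in $(iii)$ (rather than on some other legitimate rearrangement) is where care is required. No new idea beyond Theorem~\ref{prod_mains}, Lemma~\ref{rule} and Lemma~\ref{rule2} enters, which is also why Remark~\ref{orem} can record how the same recipe produces the analogous decomposition for every $n>3$.
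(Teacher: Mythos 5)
Your proposal is correct and follows essentially the same route as the paper: $(i)\iff(ii)$ via Theorem \ref{prod_mains}, $(iii)\Rightarrow(i)$ via Lemma \ref{rule}, and $(ii)\Rightarrow(iii)$ by repeatedly specializing coordinates in condition $(ii)$ and upgrading to operator/kernel identities — your general ``peeling rule'' for $(S,T,k)$ with $k\notin S\cup T$ is exactly the step the paper performs one instance at a time (with the same elimination order $k=1,2,3$), and the termination argument via $|S\cup T|$ is sound. The only thing you leave implicit is the explicit bookkeeping that the fixed-order iteration lands on precisely the displayed $24$-term identity, which is the bulk of the paper's written proof but involves no further idea.
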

\begin{proof}
The $(i) \iff (ii)$ direction is the same as in the previous theorem.  For the direction $(ii) \implies (iii)$, let us start with condition $(ii)$, when $k=1$, that is,
\[
\big( \Gamma(\bm{\lambda}) - \Gamma_1(\bm{\lambda}) \big) \big( \Psi(\bm{\mu}) - \Psi_1(\bm{\mu}) \big)^* = 0.
\]
This gives the following decomposition.
\begin{equation}\label{da1}
\Gamma(\bm{\lambda})  \Psi(\bm{\mu})^* =  \Gamma(\bm{\lambda}) \Psi_1(\bm{\mu})^* + \Gamma_1(\bm{\lambda})\Psi(\bm{\mu})^* - \Gamma_1(\bm{\lambda}) \Psi_1(\bm{\mu})^*.
\end{equation}
Again using condition $(ii)$ when $k=2$ and $\mu_1 = 0$, we get
\[
\big( \Gamma(\bm{\lambda}) - \Gamma_2(\bm{\lambda}) \big) \big( \Psi_{1}(\bm{\mu}) - \Psi_{1,2}(\bm{\mu}) \big)^* = 0.
\]
This implies that,
\[
 \Gamma(\bm{\lambda}) \Psi_1(\bm{\mu})^* =   \Gamma(\bm{\lambda}) \Psi_{1,2}(\bm{\mu})^* + \Gamma_2(\bm{\lambda}) \Psi_1(\bm{\mu})^* - \Gamma_2(\bm{\lambda}) \Psi_{1,2}(\bm{\mu})^* 
\]
Since the above condition is assumed to be true for all $\bm{\lambda}, \bm{\mu}$ in $\D^3$, we can put $\lambda_1 = 0$ to get
\[
\Gamma_1(\bm{\lambda}) \Psi_{1}(\bm{\mu})^* = \Gamma_1(\bm{\lambda}) \Psi_{1,2}(\bm{\mu})^*
+ \Gamma_{1,2}(\bm{\lambda})  \Psi_{1}(\bm{\mu})^* - \Gamma_{1,2}(\bm{\lambda}) \Psi_{1,2}(\bm{\mu})^*.
\]
Using condition $(ii)$, when $k=2$ and $\lambda_1 = 0$ gives
\[
\big( \Gamma_1(\bm{\lambda}) - \Gamma_{1,2}(\bm{\lambda}) \big) \big( \Psi(\bm{\mu}) - \Psi_{2}(\bm{\mu}) \big)^* = 0.
\]
\begin{equation*}
\Gamma_1(\bm{\lambda})\Psi(\bm{\mu})^* =  \Gamma_1(\bm{\lambda})\Psi_2(\bm{\mu})^* + \Gamma_{1,2}(\bm{\lambda}) \Psi(\bm{\mu})^* - \Gamma_{1,2}(\bm{\lambda}) \Psi_{2}(\bm{\mu})^*.
\end{equation*}
Incorporating all the above three decompositions inside condition (\ref{da1}), we get
\begin{equation}\label{da2}
\begin{split}
\Gamma(\bm{\lambda})  \Psi(\bm{\mu})^* &=  \Gamma(\bm{\lambda}) \Psi_{1,2}(\bm{\mu})^* + \Gamma_2(\bm{\lambda}) \Psi_1(\bm{\mu})^* - \Gamma_2(\bm{\lambda}) \Psi_{1,2}(\bm{\mu})^* \\
&+ \Gamma_1(\bm{\lambda})\Psi_2(\bm{\mu})^* + \Gamma_{1,2}(\bm{\lambda}) \Psi(\bm{\mu})^* - \Gamma_{1,2}(\bm{\lambda}) \Psi_{2}(\bm{\mu})^* \\
&- \Gamma_1(\bm{\lambda}) \Psi_{1,2}(\bm{\mu})^*
- \Gamma_{1,2}(\bm{\lambda})  \Psi_{1}(\bm{\mu})^* + \Gamma_{1,2}(\bm{\lambda}) \Psi_{1,2}(\bm{\mu})^*.
\end{split}
\end{equation}
Using condition $(ii)$, when $k=3$ and $\mu_1=\mu_2=0$, we get
\[
\big( \Gamma(\bm{\lambda}) - \Gamma_3(\bm{\lambda}) \big) \big( \Psi_{1,2}(\bm{\mu}) - \Psi (\bm{0}) \big)^* = 0.
\]
This gives the following decomposition.
\[
 \Gamma(\bm{\lambda})\Psi_{1,2}(\bm{\mu})^* =    \Gamma_3(\bm{\lambda}) \Psi_{1,2}(\bm{\mu})^* + \Gamma(\bm{\lambda}) \Psi(\bm{0})^* - \Gamma_3(\bm{\lambda}) \Psi(\bm{0})^*.
\]
Again using condition $(ii)$, when $k=3$ and $\lambda_1 = \lambda_2 = 0$, we get
\[
\big( \Gamma_{1,2}(\bm{\lambda}) - \Gamma (\bm{0}) \big) \big( \Psi(\bm{\mu}) - \Psi_3(\bm{\mu}) \big)^* = 0.
\]
This gives the following decomposition.
\[
\Gamma_{1,2}(\bm{\lambda}) \Psi(\bm{\mu})^* = \Gamma_{1,2}(\bm{\lambda})\Psi_{3}(\bm{\mu})^* + \Gamma(\bm{0})\Psi(\bm{\mu})^* - \Gamma(\bm{0})  \Psi_{3}(\bm{\mu})^*.
\]
Incorporating the above decompositions in condition (\ref{da2}), we get
\begin{equation}\label{da3}
\begin{split}
\Gamma(\bm{\lambda})  \Psi(\bm{\mu})^* &=  \Gamma_3(\bm{\lambda}) \Psi_{1,2}(\bm{\mu})^* + \Gamma(\bm{\lambda}) \Psi(\bm{0})^* - \Gamma_3(\bm{\lambda}) \Psi(\bm{0})^* \\
&+ \Gamma_2(\bm{\lambda}) \Psi_1(\bm{\mu})^* - \Gamma_2(\bm{\lambda}) \Psi_{1,2}(\bm{\mu})^* \\
&+ \Gamma_1(\bm{\lambda})\Psi_2(\bm{\mu})^*\\ 
&+ \Gamma_{1,2}(\bm{\lambda})\Psi_{3}(\bm{\mu})^* + \Gamma(\bm{0})\Psi(\bm{\mu})^* - \Gamma(\bm{0})  \Psi_{3}(\bm{\mu})^*\\
&- \Gamma_{1,2}(\bm{\lambda}) \Psi_{2}(\bm{\mu})^* \\
&- \Gamma_1(\bm{\lambda}) \Psi_{1,2}(\bm{\mu})^*
- \Gamma_{1,2}(\bm{\lambda})  \Psi_{1}(\bm{\mu})^* + \Gamma_{1,2}(\bm{\lambda}) \Psi_{1,2}(\bm{\mu})^*.
\end{split}
\end{equation}
Condition $(ii)$ when $k=3$, and $\lambda_1 = \mu_2 = 0$, gives us
\[
\big( \Gamma_{1}(\bm{\lambda}) - \Gamma_{1,3}(\bm{\lambda}) \big) \big( \Psi_2(\bm{\mu}) - \Psi_{2,3}(\bm{\mu}) \big)^* = 0,
\]
which provides the following decomposition.
\[
\Gamma_1(\bm{\lambda}) \Psi_{2}(\bm{\mu})^* = \Gamma_1(\bm{\lambda})  \Psi_{2,3}(\bm{\mu})^* +  \Gamma_{1,3}(\bm{\lambda}) \Psi_{2}(\bm{\mu})^* - \Gamma_{1,3}(\bm{\lambda}) \Psi_{2,3}(\bm{\mu})^*.
\]
Now, we will separately consider the following cases: $(a)~\mu_1 = 0 $, $(b)~\lambda_2 = 0$ and $(c)~\lambda_2 = \mu_1 = 0$, in the above condition to get the following three decompositions.
\[
\Gamma_1(\bm{\lambda})\Psi_{1,2}(\bm{\mu})^* =  \Gamma_1(\bm{\lambda}) \Psi(\bm{0})^* +  \Gamma_{1,3}(\bm{\lambda}) \Psi_{1,2}(\bm{\mu})^* - \Gamma_{1,3}(\bm{\lambda}) \Psi(\bm{0})^*. \tag{\text{a}}
\]
\[
\Gamma_{1,2}(\bm{\lambda}) \Psi_{2}(\bm{\mu})^* =   \Gamma_{1,2}(\bm{\lambda}) \Psi_{2,3}(\bm{\mu})^* + \Gamma(\bm{0}) \Psi_{2}(\bm{\mu})^* - \Gamma(\bm{0}) \Psi_{2,3}(\bm{\mu})^*. \tag{\text{b}}
\]
\[
\Gamma_{1,2}(\bm{\lambda})  \Psi_{1,2}(\bm{\mu})^* = \Gamma_{1,2}(\bm{\lambda}) \Psi(\bm{0})^* + \Gamma(\bm{0}) \Psi_{1,2}(\bm{\mu})^* - \Gamma(\bm{0}) \Psi(\bm{0})^*. \tag{\text{c}}
\]
Incorporating the above identities in condition (\ref{da3}), we get
\begin{equation}\label{da4}
\begin{split}
\Gamma(\bm{\lambda})  \Psi(\bm{\mu})^* &=  \Gamma_3(\bm{\lambda}) \Psi_{1,2}(\bm{\mu})^* + \Gamma(\bm{\lambda}) \Psi(\bm{0})^* - \Gamma_3(\bm{\lambda}) \Psi(\bm{0})^* \\
&+ \Gamma_2(\bm{\lambda}) \Psi_1(\bm{\mu})^* - \Gamma_2(\bm{\lambda}) \Psi_{1,2}(\bm{\mu})^* \\
&+ \Gamma_1(\bm{\lambda})  \Psi_{2,3}(\bm{\mu})^* +  \Gamma_{1,3}(\bm{\lambda}) \Psi_{2}(\bm{\mu})^* - \Gamma_{1,3}(\bm{\lambda}) \Psi_{2,3}(\bm{\mu})^* \\
&+ \Gamma_{1,2}(\bm{\lambda})\Psi_{3}(\bm{\mu})^* + \Gamma(\bm{0})\Psi(\bm{\mu})^* - \Gamma(\bm{0})  \Psi_{3}(\bm{\mu})^*\\
&- \Gamma_{1,2}(\bm{\lambda}) \Psi_{2,3}(\bm{\mu})^* - \Gamma(\bm{0}) \Psi_{2}(\bm{\mu})^* + \Gamma(\bm{0}) \Psi_{2,3}(\bm{\mu})^* \\
&- \Gamma_1(\bm{\lambda}) \Psi(\bm{0})^* - \Gamma_{1,3}(\bm{\lambda}) \Psi_{1,2}(\bm{\mu})^* + \Gamma_{1,3}(\bm{\lambda}) \Psi(\bm{0})^*.\\
&- \Gamma_{1,2}(\bm{\lambda})  \Psi_{1}(\bm{\mu})^* \\
&+ \Gamma_{1,2}(\bm{\lambda}) \Psi(\bm{0})^* + \Gamma(\bm{0}) \Psi_{1,2}(\bm{\mu})^* - \Gamma(\bm{0}) \Psi(\bm{0})^*.
\end{split}
\end{equation}
In a similar manner, using condition $(ii)$, when $k=3$ and $\lambda_2 = \mu_1 = 0$ gives us 
\[
\big( \Gamma_{2}(\bm{\lambda}) - \Gamma_{2,3}(\bm{\lambda}) \big) \big( \Psi_1(\bm{\mu}) - \Psi_{1,3}(\bm{\mu}) \big)^* = 0,
\]
and hence,  the following decomposition.
\[
\Gamma_2(\bm{\lambda}) \Psi_{1}(\bm{\mu})^* =  \Gamma_2(\bm{\lambda})\Psi_{1,3}(\bm{\mu})^* + \Gamma_{2,3}(\bm{\lambda}) \Psi_{1}(\bm{\mu})^* - \Gamma_{2,3}(\bm{\lambda}) \Psi_{1,3}(\bm{\mu})^*. 
\]
Considering the following cases separately: $(d)~\lambda_1 =0 $, $(e)~\mu_2 = 0$ in the above condition gives us the following two decompositions.
\[
\Gamma_{1,2}(\bm{\lambda}) \Psi_{1}(\bm{\mu})^* =  \Gamma_{1,2}(\bm{\lambda})\Psi_{1,3}(\bm{\mu})^* + \Gamma (\bm{0}) \Psi_{1}(\bm{\mu})^* - \Gamma (\bm{0}) \Psi_{1,3}(\bm{\mu})^*. \tag{\text{d}}
\]
\[
\Gamma_2(\bm{\lambda}) \Psi_{1,2}(\bm{\mu})^* =  \Gamma_2(\bm{\lambda}) \Psi(\bm{0})^*+ \Gamma_{2,3}(\bm{\lambda}) \Psi_{1,2}(\bm{\mu})^* - \Gamma_{2,3}(\bm{\lambda}) \Psi(\bm{0})^*.  \tag{\text{e}}
\]
Incorporating the above decompositions inside condition (\ref{da4}), gives us
\begin{equation}\label{da5}
\begin{split}
\Gamma(\bm{\lambda})  \Psi(\bm{\mu})^* &=  \Gamma_3(\bm{\lambda}) \Psi_{1,2}(\bm{\mu})^* + \Gamma(\bm{\lambda}) \Psi(\bm{0})^* - \Gamma_3(\bm{\lambda}) \Psi(\bm{0})^* \\
&+ \Gamma_2(\bm{\lambda})\Psi_{1,3}(\bm{\mu})^* + \Gamma_{2,3}(\bm{\lambda}) \Psi_{1}(\bm{\mu})^* - \Gamma_{2,3}(\bm{\lambda}) \Psi_{1,3}(\bm{\mu})^*. \\
&- \Gamma_2(\bm{\lambda}) \Psi(\bm{0})^*- \Gamma_{2,3}(\bm{\lambda}) \Psi_{1,2}(\bm{\mu})^* + \Gamma_{2,3}(\bm{\lambda}) \Psi(\bm{0})^*\\
&+ \Gamma_1(\bm{\lambda})  \Psi_{2,3}(\bm{\mu})^* +  \Gamma_{1,3}(\bm{\lambda}) \Psi_{2}(\bm{\mu})^* - \Gamma_{1,3}(\bm{\lambda}) \Psi_{2,3}(\bm{\mu})^* \\
&+ \Gamma_{1,2}(\bm{\lambda})\Psi_{3}(\bm{\mu})^* + \Gamma(\bm{0})\Psi(\bm{\mu})^* - \Gamma(\bm{0})  \Psi_{3}(\bm{\mu})^*\\
\
&- \Gamma_{1,2}(\bm{\lambda}) \Psi_{2,3}(\bm{\mu})^* - \Gamma(\bm{0}) \Psi_{2}(\bm{\mu})^* + \Gamma(\bm{0}) \Psi_{2,3}(\bm{\mu})^* \\
&- \Gamma_1(\bm{\lambda}) \Psi(\bm{0})^* - \Gamma_{1,3}(\bm{\lambda}) \Psi_{1,2}(\bm{\mu})^* + \Gamma_{1,3}(\bm{\lambda}) \Psi(\bm{0})^*.\\
&- \Gamma_{1,2}(\bm{\lambda})\Psi_{1,3}(\bm{\mu})^* - \Gamma (\bm{0}) \Psi_{1}(\bm{\mu})^* + \Gamma (\bm{0}) \Psi_{1,3}(\bm{\mu})^*. \\
&+ \Gamma_{1,2}(\bm{\lambda}) \Psi(\bm{0})^* + \Gamma(\bm{0}) \Psi_{1,2}(\bm{\mu})^* - \Gamma(\bm{0}) \Psi(\bm{0})^*.
\end{split}
\end{equation}
Therefore, for any $\eta, \zeta \in \cle$, we get
\begin{align*}
\langle M_{\Gamma}M_{\Psi}^* k_{\bm{\mu}} \eta, k_{\bm{\lambda}} \zeta \rangle &= k_{\bm{{\mu}}}(\bm{\lambda}) \langle  \Gamma(\bm{\lambda})\Psi(\bm{\mu})^*  \eta,  \zeta \rangle \\
&= k_{\bm{{\mu}}}(\bm{\lambda}) \Big( \langle  \Gamma_3(\bm{\lambda}) \Psi_{1,2}(\bm{\mu})^* \eta,  \zeta \rangle + \langle  \Gamma(\bm{\lambda}) \Psi(\bm{0})^* \eta,  \zeta \rangle - \langle  \Gamma_3(\bm{\lambda}) \Psi(\bm{0})^* \eta,  \zeta \rangle \\
&+ \langle  \Gamma_2(\bm{\lambda})\Psi_{1,3}(\bm{\mu})^* \eta,  \zeta \rangle + \langle  \Gamma_{2,3}(\bm{\lambda}) \Psi_{1}(\bm{\mu})^* \eta,  \zeta \rangle - \langle \Gamma_{2,3}(\bm{\lambda}) \Psi_{1,3}(\bm{\mu})^* \eta,  \zeta \rangle. \\
&- \langle \Gamma_2(\bm{\lambda}) \Psi(\bm{0})^* \eta,  \zeta \rangle - \langle \Gamma_{2,3}(\bm{\lambda}) \Psi_{1,2}(\bm{\mu})^* \eta,  \zeta \rangle + \langle \Gamma_{2,3}(\bm{\lambda}) \Psi(\bm{0})^* \eta,  \zeta \rangle. \\
&+ \langle \Gamma_1(\bm{\lambda}) \Psi_{2,3}(\bm{\mu})^* \eta,  \zeta \rangle +  \langle \Gamma_{1,3}(\bm{\lambda}) \Psi_{2}(\bm{\mu})^* \eta,  \zeta \rangle - \langle \Gamma_{1,3}(\bm{\lambda}) \Psi_{2,3}(\bm{\mu})^* \eta,  \zeta \rangle \\
&+ \langle \Gamma_{1,2}(\bm{\lambda})\Psi_{3}(\bm{\mu})^* \eta,  \zeta \rangle + \langle \Gamma(\bm{0})\Psi(\bm{\mu})^* \eta,  \zeta \rangle - \langle \Gamma(\bm{0})  \Psi_{3}(\bm{\mu})^* \eta,  \zeta \rangle\\
&- \langle \Gamma_{1,2}(\bm{\lambda}) \Psi_{2,3}(\bm{\mu})^* \eta,  \zeta \rangle - \langle \Gamma(\bm{0}) \Psi_{2}(\bm{\mu})^* \eta,  \zeta \rangle + \langle \Gamma(\bm{0}) \Psi_{2,3}(\bm{\mu})^* \eta,  \zeta \rangle \\
&- \langle \Gamma_1(\bm{\lambda}) \Psi(\bm{0})^* \eta,  \zeta \rangle - \langle \Gamma_{1,3}(\bm{\lambda}) \Psi_{1,2}(\bm{\mu})^* \eta,  \zeta \rangle + \langle \Gamma_{1,3}(\bm{\lambda}) \Psi(\bm{0})^* \eta,  \zeta \rangle.\\
&- \langle \Gamma_{1,2}(\bm{\lambda})\Psi_{1,3}(\bm{\mu})^* \eta,  \zeta \rangle - \langle \Gamma (\bm{0}) \Psi_{1}(\bm{\mu})^* \eta,  \zeta \rangle + \langle \Gamma (\bm{0}) \Psi_{1,3}(\bm{\mu})^* \eta,  \zeta \rangle . \\
&+ \langle \Gamma_{1,2}(\bm{\lambda}) \Psi(\bm{0})^* \eta,  \zeta \rangle + \langle \Gamma(\bm{0}) \Psi_{1,2}(\bm{\mu})^* \eta,  \zeta \rangle - \langle \Gamma(\bm{0}) \Psi(\bm{0})^* \eta,  \zeta \rangle \Big).
\end{align*}
Since the above is true for all $\bm{\lambda}, \bm{\mu} \in \D^3$, and $\eta, \zeta \in \cle$, we get
\begin{align*}
M_{\Gamma}M_{\Psi}^*  &=  M_{\Gamma_3}M_{\Psi_{1,2}}^* + M_{\Gamma} \Psi(\bm{0})^* - M_{\Gamma_3} \Psi(\bm{0})^*\\
&+ M_{\Gamma_2} M_{\Psi_{1,3}}^* + M_{\Gamma_{2,3}} M_{\Psi_{1}}^* - M_{\Gamma_{2,3}}  M_{\Psi_{2,3}}^*\\
&-M_{\Gamma_2} \Psi(\bm{0})^* - M_{\Gamma_{2,3}} M_{\Psi_{1,2}}^* + M_{\Gamma_{2,3}}  \Psi(\bm{0})^*\\
&+M_{\Gamma_{1,2}} M_{\Psi_3}^* + \Gamma(\bm{0}) M_{\Psi}^* - \Gamma(\bm{0}) M_{\Psi_3}^*\\
&-M_{\Gamma_{1,2}} M_{\Psi_{2,3}}^*  - \Gamma(\bm{0}) M_{\Psi_{2}}^* +  \Gamma(\bm{0}) M_{\Psi_{2,3}}^*\\
&-M_{\Gamma_{1}} \Psi(\bm{0})^*  - M_{\Gamma_{1,3}} M_{\Psi_{1,2}}^* +  M_{\Gamma_{1,3}} \Psi(\bm{0})^* \\
&-M_{\Gamma_{1,2}} M_{\Psi_{1,3}}^*  - \Gamma(\bm{0}) M_{\Psi_{1}}^* +  \Gamma(\bm{0}) M_{\Psi_{1,3}}^* \\
&+M_{\Gamma_{1,2}} \Psi(\bm{0})^*  + \Gamma(\bm{0}) M_{\Psi_{1,2}}^* -  \Gamma(\bm{0}) \Psi(\bm{0})^*.
\end{align*}
This completes the proof for the direction $(ii) \implies (iii)$. For the direction $(iii) \implies (i)$, it is easy to observe using Lemma \ref{rule} that all the operators on the right-hand side of the above decomposition are Toeplitz operators. Thus, $M_{\Gamma}M_{\Psi}^*$ must be a Toeplitz operator as well.
This completes the proof.
\end{proof}
\begin{rem}\label{orem}
Let us show how Theorem \ref{prod_mains} can be used as an algorithm to decompose $M_{\Gamma} M_{\Psi}^*$ into the sum of elementary Toeplitz operators for any fixed $n \in \{1,\ldots,n\}$. Note that the main condition is the following.
\[
\big( \Gamma(\bm{\lambda}) - \Gamma_k(\bm{\lambda})\big) \big(  \Psi(\bm{\mu}) - \Psi_k(\bm{\mu}) \big)^* = 0
\tag{\textbf{A}}.
\]
Now, for any $i \in \{1,\ldots,n\}$, if we take $k=i$ and $\lambda_t=0=\mu_t$ for all $t \in \{1,\ldots,i-1\}$ , then 
\[
\big( \Gamma_{1,\ldots,i-1}(\bm{\lambda}) - \Gamma_{1,\ldots,i}(\bm{\lambda})\big) \big(  \Psi_{1,\ldots,i-1}(\bm{\mu}) - \Psi_{1,\ldots,i}(\bm{\mu}) \big)^* = 0,
\]
which gives the following decomposition.
\[
\Gamma_{1,\ldots,i-1} (\bm{\lambda}) \Psi_{1,\ldots,i-1} (\bm{\mu})^*  
=  \Gamma_{1,\ldots,i-1} (\bm{\lambda}) \Psi_{1,\ldots,i} (\bm{\mu})^* 
+ \Gamma_{1,\ldots,i} (\bm{\lambda})  \Psi_{1,\ldots,i-1}(\bm{\mu})^*
-\Gamma_{1,\ldots,i} (\bm{\lambda})  \Psi_{1,\ldots,i}(\bm{\mu})^*.
\]
Thus, we have got a recursive relation for decomposing the last term, which results in
\begin{align*}
&\Gamma(\bm{\lambda}) \Psi(\bm{\mu})^* \\
&= \Gamma(\bm{\lambda}) \Psi(0,\mu_2,\ldots,\mu_n)^* + 
 \Gamma(0,\lambda_2,\ldots,\lambda_n)  \Psi(\bm{\mu})^*\\
&-\Gamma(0,\lambda_2,\ldots,\lambda_n) \Psi(0,0, \mu_3,\ldots,\mu_n)^* 
- \Gamma(0,0, \lambda_3,\ldots,\lambda_n) \Psi(0,\mu_2,\ldots,\mu_n)^* \\
&+\Gamma(0,0, \lambda_3,\ldots,\lambda_n) \Psi(0,0,0,\mu_4,\ldots,\mu_n)^* 
+ \Gamma(0,0,0,\lambda_4,\ldots,\lambda_n) \Psi(0,0,\mu_3,\ldots,\mu_n)^* \\
&-\Gamma(0,0, 0, \lambda_4,\ldots,\lambda_n) \Psi(0,0,0,0,\mu_5,\ldots,\mu_n)^* 
- \Gamma(0,0,0,0,\lambda_5,\ldots,\lambda_n) \Psi(0,0,0,\mu_4,\ldots,\mu_n)^* \\
& \vdots\\
&(-1)^{n-2} \Gamma(0,\ldots,0,\lambda_{n-1}, \lambda_n) \Psi(0,\ldots,0, \mu_n)^* 
+ (-1)^{n-2} \Gamma(0,\ldots,0, \lambda_n) \Psi(0,\ldots,0,\mu_{n-1}, \mu_n)^* \\
&(-1)^{n-1} \Gamma(0,\ldots,0,\lambda_n) \Psi(0,\ldots,0)^* 
+ (-1)^{n-1} \Gamma(0,\ldots,0) \Psi(0,\ldots,0,\mu_n)^* + (-1)^n \Gamma \big(\bm{0} \big) \Psi \big(\bm{0} \big)^*.
\end{align*}
Note that the last three terms correspond to Toeplitz operators. 
Each of the rest of the terms can be again decomposed using condition $(A)$ by choosing the appropriate $k \in \{1,\ldots,n\}$ and $\lambda_i$'s and $\mu_j$'s to be zero.  For instance,  using condition $(\textbf{A})$, when $\mu_1=0$ and $k=2$, we get
\[
\big(\Gamma (\bm{\lambda})  - \Gamma_2 (\bm{\lambda})  \big) \cdot \big( \Psi_1( \bm{\mu}) - \Psi_{1,2}( \bm{\mu}) \big)^* =0,
\]
giving us the following decomposition
\[
\Gamma(\bm{\lambda}) \Psi_1(\bm{\mu})^* =  \Gamma(\bm{\lambda}) \Psi_{1,2}( \bm{\mu})^* + 
\Gamma_2 (\bm{\lambda})   \Psi_{1}(\bm{\mu})^* 
- \Gamma_2 (\bm{\lambda}) \Psi_{1,2}( \bm{\mu})^*.
\]
This will give a decomposition for the first term on the right-hand side of the big identity above.  The goal is to keep on decomposing the terms till we are left with symbols depending on different sets of variables. But it is evident that unless we have a fixed $n$, we cannot explicitly identify when the process will culminate, as each of the above terms will keep on having decompositions into more and more components. However, for any fixed choice of $n \in \Nat$, we can completely decompose $M_{\Gamma}M_{\Psi}^*$ in the same manner as we have seen for $n=1,2,3$.  Now, if someone is interested in what terms will be present in the decomposition of $M_{\Gamma}M_{\Psi}^*$, then based on the $n=1,2,3$ cases it is likely that modulo the sign, given any non-empty subset $A \subseteq \{1,\ldots,n\}$, we will have all the terms of the form $M_{\Gamma_A} M_{\Psi_B}^*$, where $B \subseteq \{1,\ldots,n\}$ such that $A \cup B = \{1,\ldots,n\}$.
\end{rem}
\section{Beurling-type Toeplitz ranges}\label{Beurling}
In the scalar cases, the main result in \cite{KPS} shows that a partially isometric Toeplitz operator $T_{\phi}$ on $H^2(\D^n)$ must be of the following form.
\[
T_{\phi} = M_{\phi_1}^*M_{\phi_2},
\]
where $\phi_1, \phi_2$ are inner functions on $\D^n$, but depending on disjoint set of variables. As a consequence of this factorization, one observes that 
\[
\ran T_{\phi} = M_{\phi_1} H^2(\D^n); \quad \ran T_{\phi}^* = M_{\phi_2} H^2(\D^n).
\]
In other words, the range of the partially isometric Toeplitz operator must be Beurling-type.  This section aims to prove that this result is true even for vector-valued Hardy spaces.  Since the commutativity of the symbols is lacking in vector-valued Hardy spaces, we have to pursue a completely new and different approach based on the characterization of Beurling-type invariant subspaces of $H_{\cle}^2(\D^n)$ via restriction operators. We begin by first showing that the range of partially isometric Toeplitz operators is a shift-invariant subspace of $H_{\cle}^2(\D^n)$.
\begin{propn}\label{shift-invariant}
If a Toeplitz operator $T_{\Phi}$ on $H_{\clb(\cle)}^2(\D^n)$ is a partial isometry, then the range of $T_{\Phi}$ is a $(M_{z_1}, \ldots, M_{z_n})$-joint invariant closed subspace of $H_{\cle}^2(\D^n)$.
\end{propn}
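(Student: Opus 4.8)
The plan is to verify directly that each $M_{z_i}$ carries $\ran T_{\Phi}$ into itself; closedness of $\ran T_{\Phi}$ is automatic since partial isometries have closed range. Put $P:=T_{\Phi}T_{\Phi}^{*}$, the orthogonal projection onto $\ran T_{\Phi}$, and $R:=T_{\Phi}^{*}T_{\Phi}$, the orthogonal projection onto $(\ker T_{\Phi})^{\perp}$. Since $\ran(M_{z_i}T_{\Phi})=M_{z_i}(\ran T_{\Phi})$, it is enough to prove
\[
P\,M_{z_i}T_{\Phi}=M_{z_i}T_{\Phi}\qquad(i=1,\dots,n).
\]
Write $Q_i:=P_{\ker M_{z_i}^{*}}=I-M_{z_i}M_{z_i}^{*}$ (identity (\ref{pi})). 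Multiplying the Toeplitz relation $M_{z_i}^{*}T_{\Phi}M_{z_i}=T_{\Phi}$ of Theorem \ref{Toep_char} on the left by $M_{z_i}$ gives $T_{\Phi}M_{z_i}=M_{z_i}T_{\Phi}+X_i$ with $X_i:=Q_iT_{\Phi}M_{z_i}$; since $T_{\Phi}^{*}$ is itself a Toeplitz operator (with symbol $\Phi^{*}$), the same manipulation gives $T_{\Phi}^{*}M_{z_i}=M_{z_i}T_{\Phi}^{*}+Q_iT_{\Phi}^{*}M_{z_i}$.

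The heart of the argument is the following fact, which is where partial isometry is genuinely used: \emph{if $S$ is a partially isometric Toeplitz operator on $H_{\cle}^{2}(\D^{n})$, then $Q_iSM_{z_i}$ annihilates $(\ker S)^{\perp}$; equivalently $\bigl(Q_iSM_{z_i}\bigr)\,S^{*}S=0$.} To prove it, insert $I=M_{z_i}M_{z_i}^{*}+Q_i$ and use the Toeplitz relations for $S$ and $S^{*}$ to get
\[
M_{z_i}^{*}\bigl(S^{*}S\bigr)M_{z_i}=S^{*}S+\bigl(Q_iSM_{z_i}\bigr)^{*}\bigl(Q_iSM_{z_i}\bigr).
\]
As $S$ is a partial isometry, $S^{*}S$ is the orthogonal projection onto $(\ker S)^{\perp}$, so $S^{*}S\le I$ and hence $M_{z_i}^{*}(S^{*}S)M_{z_i}\le M_{z_i}^{*}M_{z_i}=I$; comparing the two displays forces $\bigl(Q_iSM_{z_i}\bigr)^{*}\bigl(Q_iSM_{z_i}\bigr)\le I-S^{*}S=P_{\ker S}$. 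Testing this operator inequality on a vector $v\in(\ker S)^{\perp}$ gives $\|Q_iSM_{z_i}v\|^{2}\le\langle P_{\ker S}v,v\rangle=0$, so $Q_iSM_{z_i}$ vanishes on $(\ker S)^{\perp}=\ran(S^{*}S)$, as claimed. I expect this operator-inequality step to be the main obstacle: it fails for a general Toeplitz operator and crucially uses both that $M_{z_i}$ is an isometry and that $S^{*}S$ is a \emph{projection}, not merely a positive operator.

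It remains to assemble. Applying the fact to $S=T_{\Phi}$ yields $X_iR=0$. Applying it to $S=T_{\Phi}^{*}$ yields $\bigl(Q_iT_{\Phi}^{*}M_{z_i}\bigr)T_{\Phi}T_{\Phi}^{*}=0$, and right-multiplying by $T_{\Phi}$ and using $T_{\Phi}T_{\Phi}^{*}T_{\Phi}=T_{\Phi}$ gives $\bigl(Q_iT_{\Phi}^{*}M_{z_i}\bigr)T_{\Phi}=0$. Then
\[
T_{\Phi}^{*}M_{z_i}T_{\Phi}=M_{z_i}T_{\Phi}^{*}T_{\Phi}+\bigl(Q_iT_{\Phi}^{*}M_{z_i}\bigr)T_{\Phi}=M_{z_i}R,
\]
and therefore, using $T_{\Phi}M_{z_i}=M_{z_i}T_{\Phi}+X_i$, then $T_{\Phi}R=T_{\Phi}T_{\Phi}^{*}T_{\Phi}=T_{\Phi}$ and $X_iR=0$,
\[
P\,M_{z_i}T_{\Phi}=T_{\Phi}T_{\Phi}^{*}M_{z_i}T_{\Phi}=T_{\Phi}M_{z_i}R=M_{z_i}T_{\Phi}R+X_iR=M_{z_i}T_{\Phi}.
\]
Since this holds for every $i$, $\ran T_{\Phi}$ is a joint $(M_{z_1},\dots,M_{z_n})$-invariant subspace of $H_{\cle}^{2}(\D^{n})$, and it is closed; this is the assertion of the proposition.
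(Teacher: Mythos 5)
Your proof is correct, but it takes a genuinely different route from the paper. The paper proves joint invariance by a three-line norm chain: for $f\in\ran T_{\Phi}$ and fixed $i$, $\|z_i f\|\geq\|T_{\Phi}^{*}M_{z_i}f\|\geq\|M_{z_i}^{*}T_{\Phi}^{*}M_{z_i}f\|=\|T_{\Phi}^{*}f\|=\|f\|=\|z_i f\|$, forcing $\|T_{\Phi}^{*}(z_if)\|=\|z_if\|$, so $z_if$ lies in the initial space of the partial isometry $T_{\Phi}^{*}$, which is exactly the closed subspace $\ran T_{\Phi}$. The ingredients are the same as yours (the Brown--Halmos relation of Theorem \ref{Toep_char} applied to $T_{\Phi}^{*}=T_{\Phi^{*}}$, contractivity, and closedness of the range), but the paper packages them scalarly, vector by vector, whereas you work at the level of operator identities: your key lemma $\bigl(P_{\ker M_{z_i}^{*}}SM_{z_i}\bigr)S^{*}S=0$ for a partially isometric Toeplitz $S$, applied to $S=T_{\Phi}$ and $S=T_{\Phi}^{*}$, assembled into $T_{\Phi}T_{\Phi}^{*}M_{z_i}T_{\Phi}=M_{z_i}T_{\Phi}$. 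The paper's argument is shorter and more transparent; your argument buys more, since the identity $\bigl(P_{\ker M_{z_i}^{*}}T_{\Phi}^{*}M_{z_i}\bigr)T_{\Phi}=0$, equivalently $M_{z_i}T_{\Phi}^{*}T_{\Phi}=T_{\Phi}^{*}M_{z_i}T_{\Phi}$, is precisely identity (\ref{partial_1}) that the paper re-derives from scratch (using this proposition) in the proof of Theorem \ref{doubcom}; so your route front-loads part of the machinery needed there. All steps check out: the expansion $M_{z_i}^{*}(S^{*}S)M_{z_i}=S^{*}S+\bigl(Q_iSM_{z_i}\bigr)^{*}\bigl(Q_iSM_{z_i}\bigr)$ is valid because $S^{*}$ is again Toeplitz, and the passage from the operator inequality to vanishing on $(\ker S)^{\perp}$ correctly isolates where partial isometry (rather than mere contractivity) is used.
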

\begin{proof}
We already know that $T_{\Phi}$ is a partial isometry if and only if $T_{\Phi}^*$ is a partial isometry.  Thus,  $T_{\Phi}$ is a partial isometry implies that both $\ran T_{\Phi}$ and $\ran T_{\Phi}^*$ are closed subspaces of $H_{\cle}^2(\D^n)$.  Thus, we only need to show the $(M_{z_1}, \ldots, M_{z_n})$-joint invariance of $\ran T_{\Phi}$. To prove this, let us note that for any $f \in \ran T_{\Phi}$, we have the following inequalities for any arbitrary but fixed $i \in \{1,\ldots,n\}$.
\[
\|z_i f\| \geq \|T_{\Phi}^* M_{z_i} f\| \geq \|M_{z_i}^* T_{\Phi}^* M_{z_i} f\| = \|T_{\Phi}^* f\| = \|f\| = \|z_i f\|,
\]
and thus, $\|T_{\Phi}^* z_i f\| = \|z_i f \|$. Since $T_{\Phi}^*$ is a partial isometry, we must have $z_i f \in \ran T_{\Phi}$. Since $i$ was arbitrarily chosen, this must be true for all $i \in \{1,\ldots,n\}$. This completes the proof.
\end{proof}
Before proving our next result, let us highlight some useful facts involving shift-invariant subspaces of $H_{\cle}^2(\D^n)$. Given any $(M_{z_1}, \ldots, M_{z_n})$-joint invariant closed subspace $\cls$ of $H_{\cle}^2(\D^n)$, we can always associate the following restriction operators
\[
R_i := M_{z_i}|_{\cls},
\]
for each $i \in \{1,\ldots,n\}$. It is well known that these operators play a crucial role in characterizing shift-invariant subspaces of $H_{\cle}^2(\D^n)$.  We define the subspace $\cls$ to be \textit{doubly commuting} if 
\[
[R_i^*,R_j] = 0
\]
for all distinct $i,j \in \{1,\ldots,n\}$. From \cite{SSW}, we know that the following equivalence holds:\\
$(i)$~$\cls$ is doubly commuting,\\
$(ii)$~$\cls$ is a Beurling-type subspace of $H_{\cle}^2(\D^n)$, that is, by definition, there exists a Hilbert space $\clf$, and an inner function $\Theta \in H_{\clb(\cle)}^{\infty}(\D^n)$ such that
\[
\cls = M_{\Theta} H_{\cle}^2(\D^n).
\]
In the following result, we prove that the range of a partially isometric Toeplitz operator always admits the above description.
\begin{proof}[Proof of Theorem \ref{doubcom}]
Our approach is to show that if $T_{\Phi}$ is a partial isometry, then $\ran T_{\Phi}^*$ is a Beurling-type invariant subspace of $H_{\cle}^2(\D^n)$.  This will prove what we want because then we can use the following equivalence
\[
T_{\Phi} \text{ is a partial isometry } \Leftrightarrow T_{\Phi}^* \text{ is a partial isometry},
\]
to conclude that $\ran T_{\Phi}$ is Beurling-type. It is evident from the preceding discussion that we need to prove $\ran T_{\Phi}^*$ is doubly commuting.  In other words, for any distinct $i,j \in \{1,\ldots,n\}$, the following conditions should be satisfied.
\[
R_i ^* R_j = R_j R_i^*.
\]
Since $T_{\Phi}$ is a partial isometry, we can further deduce that
\[
R_i = M_{z_i}|_{\ran T_{\Phi}^*} = M_{z_i} T_{\Phi}^*T_{\Phi},
\]
for all $i \in \{1,\ldots,n\}$. Now, let us establish a few conditions essential for the sequel. From Proposition \ref{shift-invariant}, we know that $\ran T_{\Phi}$ is a $(M_{z_1}, \ldots, M_{z_n})$-joint invariant subspace, and therefore,
\[
T_{\Phi}T_{\Phi}^* M_{z_i} T_{\Phi} = M_{z_i} T_{\Phi}.
\]
Acting on the left by $M_{z_i}^*$ gives
\[
M_{z_i}^* T_{\Phi}T_{\Phi}^* M_{z_i} T_{\Phi} = T_{\Phi}.
\]
Using identity (\ref{pi}), we deduce that
\[
M_{z_i}^* T_{\Phi} (M_{z_i}M_{z_i}^* + P_{\ker M_{z_i}^*})T_{\Phi}^* M_{z_i} T_{\Phi} = T_{\Phi},
\]
and therefore, using the Toeplitz criterion in Theorem \ref{Toep_char}, we get
\[
T_{\Phi} + M_{z_i}^* T_{\Phi} P_{\ker M_{z_i}^*}T_{\Phi}^* M_{z_i} T_{\Phi} = T_{\Phi}.
\]
Hence,
\[
M_{z_i}^* T_{\Phi}P_{\ker M_{z_i}^*}T_{\Phi}^* M_{z_i} T_{\Phi}  = 0,
\]
which again implies that 
\[
T_{\Phi}^* M_{z_i}^* T_{\Phi}P_{\ker M_{z_i}^*}T_{\Phi}^* M_{z_i} T_{\Phi}  = 0,
\]
and therefore,
\[
P_{\ker M_{z_i}^*}T_{\Phi}^* M_{z_i} T_{\Phi}  = 0.
\]
In other words, since $P_{\ker M_{z_i}^*} = I_{H_{\cle}^2(\D^n)} - M_{z_i} M_{z_i}^*$, we get
\begin{equation}\label{partial_1}
M_{z_i} T_{\Phi}^* T_{\Phi} = T_{\Phi}^* M_{z_i} T_{\Phi}.
\end{equation}
Since $i$ was arbitrarily chosen, the above identity holds for all $i \in \{1,\ldots, n\}$. Now, if we act on the right of both sides by $T_{\Phi}^*$ and on the left of both sides by $T_{\Phi}$, we get
\[
 T_{\Phi} M_{z_i} T_{\Phi}^* T_{\Phi} T_{\Phi}^* = T_{\Phi}T_{\Phi}^* M_{z_i} T_{\Phi} T_{\Phi}^*.
\]
Using the $M_{z_i}$-invariance of $\ran T_{\Phi}$ we get
\begin{equation}\label{partial_2}
M_{z_i} T_{\Phi}  T_{\Phi}^* = T_{\Phi} M_{z_i} T_{\Phi}^*  \quad (i \in \{1,\ldots, n\}).
\end{equation}
and therefore, using identity (\ref{partial_1}), we get
\begin{align*}
R_i^* R_j = T_{\Phi}^* T_{\Phi} M_{z_i}^* M_{z_j}  T_{\Phi}^* T_{\Phi} = T_{\Phi}^* M_{z_i}^* T_{\Phi} T_{\Phi}^* M_{z_j} T_{\Phi}  &= T_{\Phi}^* M_{z_i}^* M_{z_j} T_{\Phi}  \\
&= T_{\Phi}^* M_{z_j}  M_{z_i}^*  T_{\Phi},
\end{align*}
and
\[
R_j R_i^*   = M_{z_j} T_{\Phi}^* T_{\Phi} M_{z_i}^* T_{\Phi}^* T_{\Phi} = M_{z_j} T_{\Phi}^* T_{\Phi} M_{z_i}^* .
\]
The last equality follows from the fact that $\ker T_{\Phi}^*$ is $(M_{z_1}^*, \ldots, M_{z_n}^*)$-joint invariant. Therefore, $\ran T_{\Phi}^*$ is a doubly commuting shift-invariant subspace if and only if 
\begin{equation}\label{c1}
T_{\Phi}^* M_{z_j}  M_{z_i}^*  T_{\Phi} = M_{z_j} T_{\Phi}^* T_{\Phi} M_{z_i}^*,
\end{equation}
for all distinct $i,j \in \{1,\ldots,n\}$. Using the identity (\ref{partial_1}), we can again observe that
\begin{align*}
&T_{\Phi}^* M_{z_j}  P_{\ker T_{\Phi}^*}M_{z_i}^*  T_{\Phi} \\
&= T_{\Phi}^* M_{z_j} M_{z_i}^*  T_{\Phi} - T_{\Phi}^* M_{z_j}  T_{\Phi} T_{\Phi}^* M_{z_i}^*  T_{\Phi} \\
&= T_{\Phi}^* M_{z_j} M_{z_i}^*  T_{\Phi} - M_{z_j}  T_{\Phi}^* T_{\Phi} T_{\Phi}^*T_{\Phi} M_{z_i}^*   \\
&= T_{\Phi}^* M_{z_j} M_{z_i}^*  T_{\Phi} - M_{z_j}  T_{\Phi}^* T_{\Phi} M_{z_i}^*.
\end{align*}
Therefore, condition (\ref{c1}) is again equivalent to $T_{\Phi}^* M_{z_j}  P_{\ker T_{\Phi}^*}M_{z_i}^*  T_{\Phi}=0$. We will now show that this identity holds. From the identities (\ref{partial_1}) and (\ref{partial_2}), we can get for any distinct $i,j \in \{1,\ldots,n\}$.
\begin{align*}
M_{z_i}^* T_{\Phi}  T_{\Phi}^*M_{z_i} M_{z_j}^* T_{\Phi}  T_{\Phi}^*M_{z_j} &= M_{z_i}^* T_{\Phi}  T_{\Phi}^* M_{z_j}^* M_{z_i} T_{\Phi}  T_{\Phi}^*M_{z_j} \\
&= M_{z_i}^* T_{\Phi}   M_{z_j}^* T_{\Phi}^* T_{\Phi} M_{z_i}  T_{\Phi}^*M_{z_j} \\
&= M_{z_i}^* T_{\Phi}   M_{z_j}^* M_{z_i}  T_{\Phi}^*M_{z_j} \\
&= M_{z_i}^* T_{\Phi}   M_{z_i}  M_{z_j}^*  T_{\Phi}^*M_{z_j} \\
&= T_{\Phi} T_{\Phi}^*.
\end{align*}
Therefore, for any distinct $i,j \in \{1,\ldots,n\}$, we get
\begin{equation}\label{partial3}
M_{z_i}^* T_{\Phi}  T_{\Phi}^*M_{z_i} M_{z_j}^* T_{\Phi}  T_{\Phi}^*M_{z_j}  =  T_{\Phi} T_{\Phi}^* = M_{z_j}^* T_{\Phi}  T_{\Phi}^*M_{z_j} M_{z_i}^* T_{\Phi}  T_{\Phi}^*M_{z_i}.
\end{equation}
From this, we can further deduce that
\begin{align*}
&M_{z_i}^* P_{\ker T_{\Phi}^*}M_{z_i} M_{z_j}^* P_{\ker T_{\Phi}^*} M_{z_j}  \\
&=  M_{z_i}^* M_{z_i} M_{z_j}^* P_{\ker T_{\Phi}^*} M_{z_j}  - M_{z_i}^* T_{\Phi} T_{\Phi}^* M_{z_i} M_{z_j}^* P_{\ker T_{\Phi}^*} M_{z_j}  \\
&=  M_{z_j}^* P_{\ker T_{\Phi}^*} M_{z_j}  - M_{z_i}^* T_{\Phi} T_{\Phi}^* M_{z_i} M_{z_j}^* P_{\ker T_{\Phi}^*} M_{z_j}  \\
&=  M_{z_j}^* P_{\ker T_{\Phi}^*} M_{z_j}  - M_{z_i}^* T_{\Phi} T_{\Phi}^* M_{z_i} M_{z_j}^*  M_{z_j}\\
&+ M_{z_i}^* T_{\Phi} T_{\Phi}^* M_{z_i} M_{z_j}^* T_{\Phi} T_{\Phi}^* M_{z_j}  \\
&=  M_{z_j}^* P_{\ker T_{\Phi}^*} M_{z_j}  - M_{z_i}^* T_{\Phi} T_{\Phi}^* M_{z_i} + T_{\Phi} T_{\Phi}^*.
\end{align*}
Since the last equality on the right-hand side is self-adjoint, the above set of identities implies that for any distinct $i,j \in \{1,\ldots,n\}$, we get
\begin{equation}\label{partial4}
M_{z_i}^* P_{\ker T_{\Phi}^*}M_{z_i} M_{z_j}^* P_{\ker T_{\Phi}^*} M_{z_j}  =  M_{z_j}^* P_{\ker T_{\Phi}^*} M_{z_j}  M_{z_i}^* P_{\ker T_{\Phi}^*}M_{z_i}.
\end{equation}
Now let us compute $C_i ^* C_i$, where $C_i = P_{\ker T_{\Phi}^*}M_{z_i}|_{\ker T_{\Phi}^*}$ for $i \in \{1,\ldots,n\}$.
\begin{align*}
C_i ^* C_i = P_{\ker T_{\Phi}^*} M_{z_i}^* P_{\ker T_{\Phi}^*}M_{z_i}P_{\ker T_{\Phi}^*}  &= M_{z_i}^* P_{\ker T_{\Phi}^*}M_{z_i}P_{\ker T_{\Phi}^*} \\
&= M_{z_i}^* P_{\ker T_{\Phi}^*}M_{z_i}.
\end{align*}
The second and last equalities uses the fact that $\ran T_{\Phi}$ is $(M_{z_1},\ldots, M_{z_n})$-joint invariant. Now, let us observe that
\begin{align*}
&I_{\ker T_{\Phi}^*} - C_i ^* C_i \\
&=I_{\ker T_{\Phi}^*} -  M_{z_i}^* P_{\ker T_{\Phi}^*}M_{z_i} \\
&= I_{H_{\cle}^2(\D^n)}  - T_{\Phi}T_{\Phi}^* - M_{z_i}^* P_{\ker T_{\Phi}^*}M_{z_i} \\
&= I_{H_{\cle}^2(\D^n)}  - T_{\Phi}T_{\Phi}^* - I_{H_{\cle}^2(\D^n)} + M_{z_i}^* T_{\Phi}T_{\Phi}^*M_{z_i} \\
&=  M_{z_i}^* T_{\Phi}T_{\Phi}^*M_{z_i}   - T_{\Phi}T_{\Phi}^*.
\end{align*}
By compressing the above identity with respect to $P_{\ker T_{\Phi}^*}$, we get
\[
I_{\ker T_{\Phi}^*} - C_i ^* C_i = P_{\ker T_{\Phi}^*} M_{z_i}^* T_{\Phi}T_{\Phi}^*M_{z_i} P_{\ker T_{\Phi}^*},
\]
Therefore, for each $i \in \{1,\ldots,n\}$, there exists isometries
\[
Y_i: \overline{(I_{\ker T_{\Phi}^*} - C_i^* C_i)^{\frac{1}{2}} \clh} \raro \overline{T_{\Phi}^*M_{z_i} P_{\ker T_{\Phi}^*} \clh}
\]
such that 
\[
Y_i (I_{\ker T_{\Phi}^*} - C_i^* C_i)^{\frac{1}{2}} = T_{\Phi}^*M_{z_i} P_{\ker T_{\Phi}^*}.
\]
Now using conditions (\ref{partial_1}), (\ref{partial_2}), and (\ref{partial3}), we deduce that
\begin{align*}
&(I_{\ker T_{\Phi}^*} - C_i^* C_i)  (I_{\ker T_{\Phi}^*} - C_j^* C_j) \\
&= \big(M_{z_i}^* T_{\Phi}T_{\Phi}^*M_{z_i}   - T_{\Phi}T_{\Phi}^* \big) \big(M_{z_j}^* T_{\Phi}T_{\Phi}^*M_{z_j}   - T_{\Phi}T_{\Phi}^* \big)\\
&= M_{z_i}^* T_{\Phi}T_{\Phi}^*M_{z_i} M_{z_j}^* T_{\Phi}T_{\Phi}^*M_{z_j}   - M_{z_i}^* T_{\Phi}T_{\Phi}^*M_{z_i} T_{\Phi}T_{\Phi}^*  \\
&- T_{\Phi}T_{\Phi}^*  M_{z_j}^* T_{\Phi}T_{\Phi}^*M_{z_j}  +  T_{\Phi}T_{\Phi}^* \\
&=  T_{\Phi}T_{\Phi}^*  - M_{z_i}^* T_{\Phi}M_{z_i} T_{\Phi}^* T_{\Phi}T_{\Phi}^*  \\
&- T_{\Phi}  M_{z_j}^* T_{\Phi}^* T_{\Phi}T_{\Phi}^*M_{z_j}  +  T_{\Phi}T_{\Phi}^* \\
&=  T_{\Phi}T_{\Phi}^*  -  T_{\Phi} T_{\Phi}^* - T_{\Phi}  T_{\Phi}^* +  T_{\Phi}T_{\Phi}^* \\
&=0.
\end{align*}
Since both $(I_{\ker T_{\Phi}^*} - C_i^* C_i)$ and $(I_{\ker T_{\Phi}^*} - C_j^* C_j)$ are non-negative operators, therefore, we have 
\[
(I_{\ker T_{\Phi}^*} - C_i^* C_i)^{\frac{1}{2}}  (I_{\ker T_{\Phi}^*} - C_j^* C_j)^{\frac{1}{2}} = 0.
\]
This implies that 
\[
T_{\Phi}^*M_{z_j} P_{\ker T_{\Phi}^*} M_{z_i}^* T_{\Phi} = Y_j (I_{\ker T_{\Phi}^*} - C_j^* C_j)^{\frac{1}{2}} (I_{\ker T_{\Phi}^*} - C_i^* C_i)^{\frac{1}{2}} Y_i^* = 0.
\]
This completes the proof.
\end{proof}

\section{Partially isometric Toeplitz operators}\label{partial}
This section will prove that partially isometric Toeplitz operators always admit a factorization into Toeplitz operators corresponding to inner symbols.
\begin{proof}[Proof of Theorem \ref{mainthm1}]
From Theorem \ref{doubcom}, we know that $T_{\Phi}$ is a partial isometry implies that both $\ran T_{\Phi}$ and $\ran T_{\phi}^*$ are Beurling-type invariant subspace of $H_{\cle}^2(\D^n)$.  Thus, there exist Hilbert spaces $\clf, \clg$ and inner functions $\Gamma(\bm{z}) \in H_{\clb(\clf, \cle)}^{\infty}(\D^n)$, $\Psi(\bm{z}) \in H_{\clb(\clg, \cle)}^{\infty}(\D^n)$ such that 
\[
\ran T_{\Phi} = M_{\Gamma} H_{\clf}^2(\D^n),
\]
and 
\[
\ran T_{\Phi}^* = M_{\Psi} H_{\clg}^2(\D^n).
\]
From the above identities, it follows that
\[
T_{\Phi} T_{\Phi}^* = M_{\Gamma}M_{\Gamma}^*; \quad T_{\Phi}^* T_{\Phi} = M_{\Psi}M_{\Psi}^*.
\]
Since $T_{\Phi}$ is partial isometry, it further implies that 
\[
M_{\Psi}M_{\Psi}^* = T_{\Phi}^*T_{\Phi} = T_{\Phi}^* T_{\Phi} T_{\Phi}^* T_{\Phi} = T_{\Phi}^* M_{\Gamma}M_{\Gamma}^* T_{\Phi},
\]
and therefore, $\|M_{\Psi}^* h\| = \|M_{\Gamma}^* T_{\Phi} h \|$ for any $h \in H_{\cle}^2(\D^n)$. Since $M_{\Psi}$ is an isometry, we have $\ran M_{\Psi}^* = H_{\clg}^2(\D^n)$ and therefore, we can define an isometry $X: H_{\clg}^2(\D^n) \raro H_{\clf}^2(\D^n)$ by 
\[
X M_{\Psi}^* h = M_{\Gamma}^* T_{\Phi} h,
\]
in other words, $X M_{\Psi}^* = M_{\Gamma}^* T_{\Phi}$ on $H_{\cle}^2(\D^n)$.  Using this map, we can observe that
\[
T_{\Phi} = M_{\Gamma}  M_{\Gamma}^* T_{\Phi} = M_{\Gamma}  X M_{\Psi}^*.
\]
This further implies that
\[
X = M_{\Gamma}^* T_{\Phi} M_{\Psi},
\]
and hence, 
\[
M_{z_i}^* X M_{z_i} = M_{z_i}^* M_{\Gamma}^* T_{\Phi} M_{\Psi} M_{z_i} =  M_{\Gamma}^* M_{z_i}^* T_{\Phi} M_{z_i} M_{\Psi}= M_{\Gamma}^* T_{\Phi} M_{\Psi}  = X,
\]
for all $i \in \{1,\ldots,n\}$. Moreover, 
\begin{align*}
XX^* = M_{\Gamma}^* T_{\Phi} M_{\Psi} M_{\Psi}^* T_{\Phi}^* M_{\Gamma}  = M_{\Gamma}^* T_{\Phi} T_{\Phi}^* T_{\Phi} T_{\Phi}^* M_{\Gamma} &= M_{\Gamma}^* T_{\Phi} T_{\Phi}^* M_{\Gamma} \\
&= M_{\Gamma}^* M_{\Gamma} M_{\Gamma}^* M_{\Gamma} \\
&= I_{H_{\clf}^2(\D^n)},
\end{align*}
shows that $X$ is a unitary between $H_{\clg}^2(\D^n)$ and $H_{\clf}^2(\D^n)$. Again, for any $i \in \{1,\ldots,n\}$, we get
\[
I_{H_{\clf}^2(\D^n)} = X X^* = M_{z_i}^* X M_{z_i}M_{z_i}^*X^*M_{z_i} =I_{H_{\clf}^2(\D^n)} - M_{z_i}^* X P_{\ker M_{z_i}^*} X^*M_{z_i} .
\]
Thus, $M_{z_i}^* X P_{\ker M_{z_i}^*} X^*M_{z_i} = 0$, and hence $(I_{H_{\clf}^2(\D^n)}- M_{z_i} M_{z_i}^*) X^* M_{z_i} = 0$ and therefore, 
\begin{equation}\label{point1}
X^* M_{z_i}  = M_{z_i} X^*,
\end{equation}
for all $i \in \{1,\ldots,n\}$.  Similarly, for any $i \in \{1,\ldots,n\}$ we have
\[
 I_{H_{\clg}^2(\D^n)} = X^* X = M_{z_i}^* X^* M_{z_i}M_{z_i}^*XM_{z_i} =I_{H_{\clg}^2(\D^n)} - M_{z_i}^* X^* P_{\ker M_{z_i}^*} X M_{z_i}.
\]
This implies that $M_{z_i}^* X^* P_{\ker M_{z_i}^*} X M_{z_i} = 0$ and thus, we get
\begin{equation}\label{point2}
X M_{z_i}  = M_{z_i} X,
\end{equation}
for all $i \in \{1,\ldots,n\}$. From conditions (\ref{point1}) and (\ref{point2}), it implies that $X$ must be a constant unitary from $H_{\clg}^2(\D)$ to  $H_{\clf}^2(\D)$. Therefore, we can re-write the map $X$ as $I_{H^2(\D^n)} \otimes X$, where $X: \clg \raro \clf$ is a unitary. Based on these observations, we can now write
\[
T_{\Phi} = M_{\Gamma}  (I_{H_{\cle}^2(\D^n)} \otimes X) M_{\Psi}^* = M_{\Gamma} M_{\tilde{\Psi}} ^*,
\]
where $\tilde{\Psi}(\bm{z}): = \Psi(\bm{z}) X^* \in H_{\clb(\clf,\cle)}^{\infty}(\D^n)$ is an inner function. Since $T_{\Phi} = M_{\Gamma} M_{\tilde{\Psi}} ^*$ is a Toeplitz operator,  using Theorem \ref{prod_mains}, we get that
\[
\big( \Gamma(\bm{\lambda}) - \Gamma_k(\bm{\lambda}) \big) \big( \tilde{\Psi}(\bm{\mu}) - \tilde{\Psi}_k(\bm{\mu})\big)^* = 0,
\]
for all $\bm{\lambda}, \bm{\mu} \in \D^n$ and $k \in \{1,\ldots,n\}$.
Conversely, if the Toeplitz operator admits a factorization like $T_{\Phi} = M_{\Gamma} M_{\Psi}^*$. Then 
\[
T_{\Phi} T_{\Phi}^* = M_{\Gamma} M_{\Psi}^* M_{\Psi} M_{\Gamma}^* = M_{\Gamma}M_{\Gamma}^*,
\]
shows that $T_{\Phi}$ is a partial isometry. This completes the proof.
\end{proof}
Using Theorem \ref{simple}, Theorem \ref{bidisc} and Theorem \ref{tridisc}, we have the following finer results.
\begin{thm}\label{psimple}
Let $T_{\Phi}$ be a non-constant Toeplitz operator on $H_{\cle}^2(\D)$. Then the following statements are equivalent.\\
$(i)$~$T_{\Phi}$ is a partial isometry,\\
$(ii)$~there exists a Hilbert space $\clf$,  and inner functions $\Gamma(z), \Psi(z) \in H_{\clb(\clf, \cle)}^{\infty}(\D)$ such that
\[
T_{\Phi} = M_{\Gamma}M_{\Psi}^*  
=  \Gamma(0) M_{\Psi}^* + M_{\Gamma} \Psi(0)^* - \Gamma(0) \Psi(0)^*.
\]
\end{thm}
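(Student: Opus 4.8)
The plan is to read this off from the results already established, specialised to $n=1$. For the implication $(i)\Rightarrow(ii)$, I would apply Theorem \ref{mainthm1} with $n=1$: since $T_{\Phi}$ is a non-constant partially isometric Toeplitz operator on $H_{\cle}^2(\D)$, there exist a Hilbert space $\clf$ and inner functions $\Gamma,\Psi\in H_{\clb(\clf,\cle)}^{\infty}(\D)$ with $T_{\Phi}=M_{\Gamma}M_{\Psi}^*$, and moreover $\Gamma,\Psi$ satisfy the product condition of that theorem. For $n=1$ the single index is $k=1$, and by the notational convention $\Gamma_1(z)=\Gamma(0)$ and $\Psi_1(z)=\Psi(0)$, so that condition becomes exactly $\big(\Gamma(\lambda)-\Gamma(0)\big)\big(\Psi(\mu)-\Psi(0)\big)^*=0$ for all $\lambda,\mu\in\D$. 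This is precisely statement $(ii)$ of Theorem \ref{simple}, and therefore by the equivalence $(ii)\Leftrightarrow(iii)$ there, $M_{\Gamma}M_{\Psi}^*$ admits the decomposition $M_{\Gamma}\Psi(0)^*+\Gamma(0)M_{\Psi}^*-\Gamma(0)\Psi(0)^*$, which (after reordering) is the identity asserted in $(ii)$.

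For the reverse implication $(ii)\Rightarrow(i)$, I would argue directly. If $T_{\Phi}=M_{\Gamma}M_{\Psi}^*$ with $\Gamma,\Psi$ inner, then $\Psi$ inner means $M_{\Psi}$ is an isometry, so $M_{\Psi}^*M_{\Psi}=I_{H_{\clf}^2(\D)}$, whence
\[
T_{\Phi}T_{\Phi}^* = M_{\Gamma}M_{\Psi}^*M_{\Psi}M_{\Gamma}^* = M_{\Gamma}M_{\Gamma}^*.
\]
Since $\Gamma$ is inner, $M_{\Gamma}$ is an isometry, so $M_{\Gamma}M_{\Gamma}^*$ is the orthogonal projection onto $\ran M_{\Gamma}=M_{\Gamma}H_{\clf}^2(\D)$; in particular $T_{\Phi}T_{\Phi}^*$ is a projection. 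By the characterisation of partial isometries recalled in Section \ref{prelim} ($T$ is a partial isometry iff $TT^*$ is a projection), $T_{\Phi}$ is a partial isometry. (The same computation also shows the hypotheses are consistent: if $T_{\Phi}$ is non-constant then $\Gamma$ and $\Psi$ cannot both be constant inner functions.)

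I do not anticipate a genuine obstacle here: the only point requiring a moment of care is the verification that the $n=1$ specialisation of the product condition in Theorem \ref{mainthm1} coincides literally with condition $(ii)$ of Theorem \ref{simple}, i.e.\ that $\Gamma_1$ is the constant function $\Gamma(0)$ — immediate from the definition of $\Gamma_k$ as $\Gamma$ with $0$ placed in the $k$-th coordinate. Everything else is a short assembly of Theorems \ref{prod_mains}, \ref{simple}, and \ref{mainthm1} together with the elementary description of partial isometries via $TT^*$.
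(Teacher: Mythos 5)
Your proposal is correct and follows essentially the same route as the paper, which states Theorem \ref{psimple} as an immediate assembly of Theorem \ref{mainthm1} (for the factorization $T_{\Phi}=M_{\Gamma}M_{\Psi}^*$ with the $n=1$ product condition) and Theorem \ref{simple} (for the decomposition), with the converse direction given by the same computation $T_{\Phi}T_{\Phi}^*=M_{\Gamma}M_{\Gamma}^*$ used at the end of the proof of Theorem \ref{mainthm1}. Your observation that $\Gamma_1(z)=\Gamma(0)$ and $\Psi_1(z)=\Psi(0)$ when $n=1$ is exactly the specialisation the paper intends, so no gap remains.
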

In the case of the bidisc, we have the following result.
\begin{thm}\label{pbidisc}
Let $T_{\Phi}$ be a non-constant Toeplitz operator on $H_{\cle}^2(\D^2)$. Then the following statements are equivalent.\\
$(i)$~$T_{\Phi}$ is a partial isometry,\\
$(ii)$~there exists a Hilbert space $\clf$,  and inner functions $\Gamma(\bm{z}), \Psi(\bm{z}) \in H_{\clb(\clf, \cle)}^{\infty}(\D^2)$ such that
\begin{align*}
T_{\Phi} &= M_{\Gamma}M_{\Psi}^*  \\
&=  M_{\Gamma} \Psi(\bm{0})^*\\
&+M_{\Gamma_1} M_{\Psi_2}^* - M_{\Gamma_1} \Psi(\bm{0})^*\\
&+M_{\Gamma_2} M_{\Psi_1}^*  - M_{\Gamma_2} \Psi(\bm{0})^* \\
& + \Gamma(\bm{0}) M_{\Psi}^* + \Gamma(\bm{0}) M_{\Psi_1}^* + \Gamma(\bm{0}) M_{\Psi_2}^*- \Gamma(\bm{0}) \Psi(\bm{0})^*.
\end{align*}
\end{thm}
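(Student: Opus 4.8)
The plan is to obtain Theorem~\ref{pbidisc} as the specialization of Theorem~\ref{mainthm1} to $n=2$, combined with the explicit product decomposition recorded in Theorem~\ref{bidisc}. The point is that both the existence of the factorization and its concrete form have already been isolated; what remains is only to splice the two statements together in the bidisc case.

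For the direction $(i)\Rightarrow(ii)$, suppose $T_{\Phi}$ is a non-constant partial isometry on $H_{\cle}^2(\D^2)$. By Theorem~\ref{mainthm1} there is a Hilbert space $\clf$ and inner functions $\Gamma(\bm{z}),\Psi(\bm{z})\in H_{\clb(\clf,\cle)}^{\infty}(\D^2)$ (over a \emph{common} $\clf$, which is exactly what the constant-unitary argument in the proof of Theorem~\ref{mainthm1} supplies) such that $T_{\Phi}=M_{\Gamma}M_{\Psi}^*$ and
\[
\big(\Gamma(\bm{\lambda})-\Gamma_k(\bm{\lambda})\big)\big(\Psi(\bm{\mu})-\Psi_k(\bm{\mu})\big)^*=0
\]
for all $\bm{\lambda},\bm{\mu}\in\D^2$ and $k=1,2$. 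Since $\Gamma,\Psi$ are in particular $\clb(\clf,\cle)$-valued bounded analytic functions, Theorem~\ref{bidisc} applies: the displayed identity is precisely condition $(ii)$ there, and the implication $(ii)\Rightarrow(iii)$ of Theorem~\ref{bidisc} yields exactly the claimed decomposition of $M_{\Gamma}M_{\Psi}^*=T_{\Phi}$ into elementary Toeplitz operators. This gives $(ii)$.

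For the converse $(ii)\Rightarrow(i)$, the hypothesis provides inner functions $\Gamma,\Psi\in H_{\clb(\clf,\cle)}^{\infty}(\D^2)$ with $T_{\Phi}=M_{\Gamma}M_{\Psi}^*$. Using $M_{\Psi}^*M_{\Psi}=I_{H_{\clf}^2(\D^2)}$ and $M_{\Gamma}^*M_{\Gamma}=I_{H_{\clf}^2(\D^2)}$ we get
\[
T_{\Phi}T_{\Phi}^*=M_{\Gamma}M_{\Psi}^*M_{\Psi}M_{\Gamma}^*=M_{\Gamma}M_{\Gamma}^*,
\]
which is a projection; hence $T_{\Phi}$ is a partial isometry. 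Since there is no genuine obstacle beyond what is already proved, the only thing requiring care is bookkeeping: checking that the hypotheses of Theorem~\ref{bidisc} (only bounded analyticity, not innerness) are met by the $\Gamma,\Psi$ coming from Theorem~\ref{mainthm1}, that the common-domain normalization is in force, and that non-constancy of $T_{\Phi}$ places us in the case genuinely handled by Theorem~\ref{mainthm1}. The analogous statements for $n=1,3$ (Theorems~\ref{psimple} and~\ref{ptridisc}) follow in the same way from Theorems~\ref{simple} and~\ref{tridisc}.
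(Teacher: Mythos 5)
Your proposal is correct and is essentially the paper's own argument: the paper obtains Theorem~\ref{pbidisc} exactly by specializing Theorem~\ref{mainthm1} to $n=2$ and invoking the decomposition $(ii)\Rightarrow(iii)$ of Theorem~\ref{bidisc}, with the converse following from $T_{\Phi}T_{\Phi}^* = M_{\Gamma}M_{\Gamma}^*$ being a projection. Nothing further is needed.
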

For the tridisc, we obtain the following characterization.
\begin{thm}\label{ptridisc}
Let $T_{\Phi}$ be a non-constant Toeplitz operator on $H_{\cle}^2(\D^2)$. Then the following statements are equivalent.\\
$(i)$~$T_{\Phi}$ is a partial isometry,\\
$(ii)$~there exists a Hilbert space $\clf$,  and inner functions $\Gamma(\bm{z}), \Psi(\bm{z}) \in H_{\clb(\clf, \cle)}^{\infty}(\D^3)$ such that
\begin{align*}
T_{\Phi} &= M_{\Gamma}M_{\Psi}^*  \\
&=  M_{\Gamma} \Psi(\bm{0})^*\\
&-M_{\Gamma_1} \Psi(\bm{0})^* + M_{\Gamma_1} M_{\Psi_{2,3}}^*\\
&-M_{\Gamma_2} \Psi(\bm{0})^* + M_{\Gamma_2} M_{\Psi_{1,3}}^*\\
&-M_{\Gamma_3} \Psi(\bm{0})^* + M_{\Gamma_3} M_{\Psi_{1,2}}^*\\
&+M_{\Gamma_{1,2}} \Psi(\bm{0})^*  - M_{\Gamma_{1,2}} M_{\Psi_{1,3}}^* -  M_{\Gamma_{1,2}} M_{\Psi_{2,3}}^* + M_{\Gamma_{1,2}} M_{\Psi_{3}}^*\\
&+M_{\Gamma_{1,3}} \Psi(\bm{0})^*  - M_{\Gamma_{1,3}} M_{\Psi_{1,2}}^* -  M_{\Gamma_{1,3}} M_{\Psi_{2,3}}^* + M_{\Gamma_{1,3}} M_{\Psi_{2}}^*\\
&+M_{\Gamma_{2,3}} \Psi(\bm{0})^*  - M_{\Gamma_{2,3}} M_{\Psi_{1,2}}^* - M_{\Gamma_{2,3}} M_{\Psi_{1,3}}^* + M_{\Gamma_{2,3}} M_{\Psi_1}^* \\
&+\Gamma(\bm{0}) M_{\Psi}^*  - \Gamma(\bm{0}) M_{\Psi_{1}}^* -  \Gamma(\bm{0}) M_{\Psi_{2}}^* - \Gamma(\bm{0}) M_{\Psi_{3}}^*\\
&+\Gamma(\bm{0}) M_{\Psi_{1,2}}^*  - \Gamma(\bm{0}) M_{\Psi_{1,3}}^* -  \Gamma(\bm{0}) M_{\Psi_{2,3}}^* - \Gamma(\bm{0}) \Psi(\bm{0})^*.
\end{align*}
\end{thm}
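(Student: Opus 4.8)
The plan is to obtain Theorem \ref{ptridisc} as the explicit $n=3$ instance of Theorem \ref{mainthm1}, using Theorem \ref{tridisc} to convert the abstract factorization into the stated decomposition. Since the substantial work has already been carried out in Sections \ref{product} and \ref{Beurling} and in the proof of Theorem \ref{mainthm1}, what remains is an assembly argument.

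For the forward implication $(i)\Rightarrow(ii)$, I would start from a non-constant partial isometry $T_{\Phi}$ on $H_{\cle}^2(\D^3)$ and first invoke Theorem \ref{mainthm1} with $n=3$ to produce a Hilbert space $\clf$ and inner functions $\Gamma, \Psi \in H_{\clb(\clf,\cle)}^{\infty}(\D^3)$ such that $T_{\Phi} = M_{\Gamma}M_{\Psi}^*$ and
\[
\big(\Gamma(\bm{\lambda}) - \Gamma_k(\bm{\lambda})\big)\big(\Psi(\bm{\mu}) - \Psi_k(\bm{\mu})\big)^* = 0
\]
for all $\bm{\lambda},\bm{\mu} \in \D^3$ and $k \in \{1,2,3\}$. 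This is exactly hypothesis $(ii)$ of Theorem \ref{tridisc}, and since $M_{\Gamma}M_{\Psi}^* = T_{\Phi}$ is a Toeplitz operator, the implication $(ii)\Rightarrow(iii)$ of Theorem \ref{tridisc} immediately yields the decomposition displayed in the statement of Theorem \ref{ptridisc}, with these same $\Gamma,\Psi$. Thus the forward direction reduces to quoting the two theorems.

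For the converse $(ii)\Rightarrow(i)$, I would use only the first line of $(ii)$, namely that $T_{\Phi} = M_{\Gamma}M_{\Psi}^*$ for a common Hilbert space $\clf$ and inner functions $\Gamma, \Psi \in H_{\clb(\clf,\cle)}^{\infty}(\D^3)$. Since $\Psi$ is inner, $M_{\Psi}^*M_{\Psi} = I_{H_{\clf}^2(\D^3)}$, hence
\[
T_{\Phi}T_{\Phi}^* = M_{\Gamma}M_{\Psi}^*M_{\Psi}M_{\Gamma}^* = M_{\Gamma}M_{\Gamma}^*,
\]
which is an orthogonal projection because $\Gamma$ is inner; equivalently $T_{\Phi}T_{\Phi}^*T_{\Phi} = T_{\Phi}$, so $T_{\Phi}$ is a partial isometry, non-constant by the assumption in $(ii)$. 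That the right-hand side of $(ii)$ genuinely sums back to $M_{\Gamma}M_{\Psi}^*$ is the content of the implication $(iii)\Rightarrow(i)$ in Theorem \ref{tridisc}, each summand being a Toeplitz operator by Lemma \ref{rule}, so no new computation is needed.

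The one point requiring care — already handled inside the proof of Theorem \ref{mainthm1} — is that a priori the two inner factors arise over possibly different source spaces, $\Gamma \in H_{\clb(\clf,\cle)}^{\infty}(\D^3)$ and $\Psi \in H_{\clb(\clg,\cle)}^{\infty}(\D^3)$, related by a constant unitary $X\colon \clg \to \clf$, which one absorbs into $\Psi$ so that both factors live over a common $\clf$. Beyond this bookkeeping I expect no real obstacle: Theorem \ref{ptridisc} is a faithful transcription of the $n=3$ case of Theorem \ref{tridisc}, and the only genuine risk is reproducing the lengthy decomposition correctly.
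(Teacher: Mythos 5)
Your proposal is correct and matches the paper's own treatment: the paper states Theorem \ref{ptridisc} as an immediate consequence of Theorem \ref{mainthm1} (giving the factorization $T_{\Phi}=M_{\Gamma}M_{\Psi}^*$ with the conditions of Theorem \ref{prod_mains}) combined with the equivalence $(ii)\Leftrightarrow(iii)$ of Theorem \ref{tridisc}, exactly as you assemble it, and your converse via $T_{\Phi}T_{\Phi}^*=M_{\Gamma}M_{\Gamma}^*$ is the same argument used at the end of the proof of Theorem \ref{mainthm1}.
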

In a similar manner, any partially isometric Toeplitz operator on $H_{\cle}^2(\D^n)$ for any fixed $n>3$ can be characterized using Theorem \ref{prod_mains} and the algorithm described in Remark \ref{orem}.  

\begin{rem}
Following the same method as in Theorem \ref{Toep_char}, we can show that $X \in \clb(H_{\cle}^2(\D^n), H_{\clf}^2(\D^n))$ is a Toeplitz operator if and only if $(M_{z_i} \otimes I_{\clf})^* X (M_{z_i} \otimes I_{\cle}) = X$ for all $i \in \{1,\ldots,n\}$, where $\cle, \clf$ are different Hilbert spaces. Using this algebraic condition, we can follow the proof of Theorem \ref{mainthm1}, to obtain a similar characterization for partiallly isometric $T_{\Phi} \in \clb(H_{\cle}^2(\D^n), H_{\clf}^2(\D^n))$. Our results can be directly used when $\mbox{dim } \cle = \mbox{dim } \clf$. In particular, we can always construct a unitary $U: \cle \raro \clf$, which can be further extended to an unitary $I \otimes U: H_{\cle}^2(\D^n) \raro H_{\clf}^2(\D^n)$. So, if $T_{\Phi} \in \clb(H_{\cle}^2(\D^n), H_{\clf}^2(\D^n))$ is a partial isometry then $U^* T_{\Phi} \in \clb(H_{\cle}^2(\D^n), H_{\clf}^2(\D^n))$ is a partially isometric Toeplitz operator as well. Using Theorem \ref{mainthm1}, we will get $U^* T_{\Phi} = M_{\Gamma} M_{\Psi}^*$, and hence, $T_{\Phi} = M_{U \Gamma} M_{\Psi}^*$.
\end{rem}

Now, an immediate question that arises is what effect does a partially isometric Toeplitz operator have on its symbol? We have the following complete answer.
\begin{cor}\label{symb}
If $T_{\Phi}$ is a partially isometric Toeplitz operator on $H_{\cle}^2(\D^n)$, then $\Phi(\bm{z})$ is a partial isometry a.e. on $\mathbb{T}^n$.
\end{cor}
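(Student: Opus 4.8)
The plan is to feed the factorization supplied by Theorem \ref{mainthm1} into the product criterion of Theorem \ref{ptoep} in order to pin down the symbol $\Phi$ explicitly, and then read off the partial‑isometry property pointwise on $\mathbb{T}^n$. First I would dispose of the degenerate case: if $\Phi$ is a constant function $\Phi\equiv C\in\clb(\cle)$, then $T_\Phi=I_{H^2(\D^n)}\otimes C$, so $T_\Phi T_\Phi^* T_\Phi=T_\Phi$ forces $CC^*C=C$; hence $C$, and therefore $\Phi(\bm{z})=C$ for every $\bm{z}$, is a partial isometry, and there is nothing more to prove. So from now on assume $T_\Phi$ is non-constant.

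By Theorem \ref{mainthm1} there are a Hilbert space $\clf$ and inner functions $\Gamma,\Psi\in H_{\clb(\clf,\cle)}^{\infty}(\D^n)$ with $T_\Phi=M_\Gamma M_\Psi^*$. Since $\Psi$ is analytic, $M_\Psi^*=T_{\Psi^*}$, where $\Psi^*$ denotes the co-analytic symbol $\bm{z}\mapsto\Psi(\bm{z})^*$ on $\mathbb{T}^n$, so $T_\Phi=T_\Gamma T_{\Psi^*}$. As this product is by hypothesis a Toeplitz operator, Theorem \ref{ptoep} gives $T_\Gamma T_{\Psi^*}=T_{\Gamma\Psi^*}$, and comparing symbols (the correspondence $\Phi\mapsto T_\Phi$ is injective) yields
\[
\Phi=\Gamma\Psi^*\qquad\text{a.e. on }\mathbb{T}^n .
\]
It then remains only to compute pointwise: since $\Gamma$ and $\Psi$ are inner, for a.e.\ $\bm{z}\in\mathbb{T}^n$ the operators $\Gamma(\bm{z}),\Psi(\bm{z})\colon\clf\to\cle$ are isometries, so $\Psi(\bm{z})^*\Psi(\bm{z})=I_\clf$ and
\[
\Phi(\bm{z})\Phi(\bm{z})^*=\Gamma(\bm{z})\Psi(\bm{z})^*\Psi(\bm{z})\Gamma(\bm{z})^*=\Gamma(\bm{z})\Gamma(\bm{z})^*,
\]
which is the orthogonal projection of $\cle$ onto $\ran \Gamma(\bm{z})$. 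Since an operator $A$ with $AA^*$ a projection is a partial isometry, it follows that $\Phi(\bm{z})$ is a partial isometry for a.e.\ $\bm{z}\in\mathbb{T}^n$, as required.

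The step I expect to be the crux is the symbol identification $\Phi=\Gamma\Psi^*$. One cannot obtain it by a naive cancellation argument on Fourier modes: for $n>1$ a product of a mode in $\Nat^n$ with one in $\Int^n\setminus\Nat^n$ can land back in $\Nat^n$, so it is not automatic that $P_{H^2_\cle}L_{\Gamma\Psi^*}|_{H^2_\cle}=M_\Gamma M_\Psi^*$. The way around this is precisely to exploit that $M_\Gamma M_\Psi^*$ is \emph{already} known to be Toeplitz and to invoke Theorem \ref{ptoep}. A minor technical point is that Theorem \ref{ptoep} is stated for $\clb(\cle)$-valued symbols, whereas here $\Gamma$ is $\clb(\clf,\cle)$-valued and $\Psi^*$ is $\clb(\cle,\clf)$-valued; but its proof uses only the intertwining relation (\ref{comm}) for Hankel operators together with the purity of each $M_{z_i}$, neither of which is sensitive to the coefficient spaces, so the identity $T_FT_G=T_{FG}$ (valid whenever $T_FT_G$ is Toeplitz) carries over verbatim to this off-diagonal situation.
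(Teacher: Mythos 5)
Your argument is correct and follows essentially the same route as the paper's own proof: combine Theorem \ref{mainthm1} with Theorem \ref{ptoep} to identify $\Phi=\Gamma\Psi^*$ a.e.\ on $\mathbb{T}^n$, then use that $\Gamma,\Psi$ are inner to see that $\Phi\Phi^*=\Gamma\Gamma^*$ is projection-valued. Your extra care about the constant case and about applying Theorem \ref{ptoep} with $\clb(\clf,\cle)$-valued symbols only makes explicit points the paper leaves implicit.
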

\begin{proof}
From Theorem \ref{mainthm1} and Theorem \ref{ptoep}, we get 
\[
T_{\Phi} = M_{\Gamma} M_{\Psi}^* = T_{\Gamma \Psi^*}.
\]
This implies that $\Phi(\bm{z}) = \Gamma(\bm{z}) \Psi(\bm{z})^*$ a.e. on $\mathbb{T}^n$. Since $\Gamma(\bm{z}), \Psi(\bm{z})$ are inner functions it implies that $\Gamma(\bm{z)} \Gamma(\bm{z)}^*$ is projection-valued and $\Psi(\bm{z})^* \Psi(\bm{z}) = I_{\clf}$ a.e. on $\mathbb{T}^n$ and therefore, 
\[
\Phi(\bm{z})\Phi(\bm{z})^* = \Gamma(\bm{z}) \Psi^*(\bm{z}) \Psi(\bm{z}) \Gamma^*(\bm{z}) = \Gamma(\bm{z}) \Gamma^*(\bm{z})  \quad (\text{a.e.  on } \mathbb{T}^n).
\]
This shows that $\Phi(\bm{z})$ is a partial isometry, a.e. on $\mathbb{T}^n$. This completes the proof.
\end{proof}
\begin{rem}
The converse direction is not true, and we present an example here. Let $\theta(z) = \frac{z}{2}$ for all $z \in \D$, and let us consider the symbol $\Phi \in L_{\clb(\mathbb{C}^2)}^{\infty}(\mathbb{T})$ defined by 
\[
\Phi(e^{it}) := \begin{bmatrix}
\theta(e^{it}) & (1 - |\theta(e^{it})|^2)^{\frac{1}{2}}\\
0 & 0
\end{bmatrix}  =  \begin{bmatrix}
\theta(e^{it}) & \frac{\sqrt{3}}{2}\\
0 & 0
\end{bmatrix}\in \clb(\mathbb{C}^2).
\]
It is clear that 
\[
\Phi(e^{it})^* \Phi(e^{it}) = \begin{bmatrix}
1 & 0\\
0 & 0
\end{bmatrix}.
\]
Thus, $\Phi(e^{it})$ is a partial isometry on $\mathbb{T}$. However, the corresponding Toeplitz operator
\[
T_{\Phi} = \begin{bmatrix}
T_{\theta} & T_{\frac{\sqrt{3}}{2}}\\
0 & 0
\end{bmatrix} \in H_{\mathbb{C}^2}^2(\D),
\]
is not partially isometric. One can easily see that if we want $T_{\Phi} T_{\Phi}^* T_{\Phi} = T_{\Phi}$, then a necessary condition is $T_{\theta} T_{\theta}^* = T_{\frac{1}{4}}$. This is not possible as $\theta$ is an analytic function.
\end{rem}
We will now characterize partially isometric Toeplitz operators with analytic symbols.
\begin{thm}\label{main_analytic}
Let $\Phi \in H_{\clb(\cle)}^{\infty}(\D^n)$. Then $M_{\Phi}$ is a partial isometry on $H_{\cle}^2(\D^n)$ if and only if there exists a Hilbert space $\clf$, an inner function $ \Gamma \in H_{\clb(\cle)}^{\infty}(\D^n)$, and an isometry $V: \clf \raro \cle$ such that 
\[
\Phi(\bm{z}) = \Gamma(\bm{z}) V^* \quad (\bm{z} \in \D^n).
\]
\end{thm}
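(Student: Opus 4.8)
The plan is to prove the two implications separately. The sufficiency is a short computation; the necessity is obtained by inserting analyticity of $\Phi$ into the factorization Theorem \ref{mainthm1} and then showing that the co-analytic factor is forced to be constant. For the \textbf{sufficiency}, suppose $\Phi(\bm{z}) = \Gamma(\bm{z}) V^*$ on $\D^n$, where $\Gamma \in H_{\clb(\clf,\cle)}^{\infty}(\D^n)$ is inner and $V\colon \clf \raro \cle$ is a constant isometry. Writing $I \otimes V^* \colon H_{\cle}^2(\D^n) \raro H_{\clf}^2(\D^n)$ for the constant multiplication operator induced by $V^*$, one has $M_{\Phi} = M_{\Gamma}(I \otimes V^*)$, and hence
\[
M_{\Phi} M_{\Phi}^* = M_{\Gamma}(I \otimes V^*)(I \otimes V) M_{\Gamma}^* = M_{\Gamma}(I \otimes V^* V)M_{\Gamma}^* = M_{\Gamma} M_{\Gamma}^*,
\]
using $V^* V = I_{\clf}$. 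Since $\Gamma$ is inner, $M_{\Gamma}$ is an isometry, so $M_{\Gamma} M_{\Gamma}^*$ is the orthogonal projection onto $M_{\Gamma} H_{\clf}^2(\D^n)$; thus $M_{\Phi} M_{\Phi}^*$ is a projection and $M_{\Phi}$ is a partial isometry.

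For the \textbf{necessity}, suppose $M_{\Phi}$ is a partial isometry. If $\Phi$ is constant the claim is immediate from the polar decomposition of the partial isometry $\Phi(\bm{z}) \equiv W \in \clb(\cle)$, so assume $M_{\Phi} = T_{\Phi}$ is non-constant. By Theorem \ref{mainthm1} there are a Hilbert space $\clf$ and inner functions $\Gamma, \Psi \in H_{\clb(\clf,\cle)}^{\infty}(\D^n)$ with $M_{\Phi} = M_{\Gamma} M_{\Psi}^*$ (the accompanying product conditions will not be needed). I would then exploit that $M_{\Phi}$ commutes with every $M_{z_i}$, since $\Phi$ is analytic: from $M_{z_i} M_{\Gamma} M_{\Psi}^* = M_{\Gamma} M_{\Psi}^* M_{z_i}$, the relation $M_{z_i} M_{\Gamma} = M_{\Gamma} M_{z_i}$, and left-invertibility of $M_{\Gamma}$ (so $M_{\Gamma}^* M_{\Gamma} = I_{H_{\clf}^2(\D^n)}$, $\Gamma$ being inner), cancelling $M_{\Gamma}$ on the left gives $M_{z_i} M_{\Psi}^* = M_{\Psi}^* M_{z_i}$, i.e. $M_{\Psi} M_{z_i}^* = M_{z_i}^* M_{\Psi}$ for all $i \in \{1,\ldots,n\}$.

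Finally I would deduce that $\Psi$ is constant. Writing $\Psi(\bm{z}) = \sum_{\bm{m} \in \Nat^n} B_{\bm{m}} \bm{z}^{\bm{m}}$ and applying the intertwining identity to a constant vector $\eta \in \clf \subseteq H_{\clf}^2(\D^n)$, one has $M_{z_i}^* \eta = 0$ while $M_{z_i}^* M_{\Psi} \eta = \sum_{\bm{m} \in \Nat^n}(B_{\bm{m}+e_i}\eta)\,\bm{z}^{\bm{m}}$, forcing $B_{\bm{m}+e_i} = 0$ for every $\bm{m}$; letting $i$ range over $\{1,\ldots,n\}$ yields $B_{\bm{m}} = 0$ whenever $\bm{m} \neq \bm{0}$. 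Hence $\Psi \equiv V := B_{\bm{0}}$, and $V\colon \clf \raro \cle$ is an isometry because $\Psi$ is inner; consequently $M_{\Phi} = M_{\Gamma} M_{V}^* = M_{\Gamma V^*}$, that is $\Phi(\bm{z}) = \Gamma(\bm{z}) V^*$ on $\D^n$. Given Theorem \ref{mainthm1}, the proof is short; the one mildly delicate point is the passage from ``$M_{\Psi}$ commutes with every backward shift $M_{z_i}^*$'' to ``$\Psi$ is constant.'' Besides the coefficient argument above, one could alternatively invoke the irreducibility of the $C^*$-algebra generated by $M_{z_1},\dots,M_{z_n}$ on $H^2(\D^n)$, which forces any operator commuting with all $M_{z_i}$ and $M_{z_i}^*$ to be of the form $I \otimes W$.
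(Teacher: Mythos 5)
Your proposal is correct and follows essentially the same route as the paper: sufficiency by the direct computation $M_{\Phi}M_{\Phi}^* = M_{\Gamma}M_{\Gamma}^*$, and necessity by invoking Theorem \ref{mainthm1}, using commutation of $M_{\Phi}$ with the shifts together with $M_{\Gamma}^* M_{\Gamma} = I$ to get $M_{z_i}M_{\Psi}^* = M_{\Psi}^* M_{z_i}$, and concluding that $\Psi$ is a constant isometry $V$. Your explicit coefficient argument for why the backward-shift intertwining forces $\Psi$ to be constant, and your separate treatment of the constant-symbol case (needed since Theorem \ref{mainthm1} is stated for non-constant $T_{\Phi}$), merely supply details the paper asserts without elaboration.
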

\begin{proof}
If $M_{\Phi}$ is a partial isometry, then by Theorem \ref{mainthm1}, we will get the following factorization
\[
M_{\Phi} = M_{\Gamma}M_{\Psi}^*,
\]
for some Hilbert space $\clf$, and inner functions $\Gamma(\bm{z}), \Psi(\bm{z}) \in H_{\clb(\clf,\cle)}^{\infty}(\D^n)$. Since $\Phi$ is a bounded analytic function, we know
\[
M_{z_i} M_{\Phi} = M_{\Phi} M_{z_i},
\]
which gives,
\[
M_{z_i} M_{\Gamma}  M_{\Psi}^* = M_{\Gamma} M_{\Psi}^* M_{z_i},
\]
and therefore,
\[
M_{\Gamma} M_{z_i} M_{\Psi}^* = M_{\Gamma} M_{\Psi}^* M_{z_i}.
\]
Using the fact that $M_{\Gamma}$ is an isometry we get
\[
M_{z_i} M_{\Psi}^* =  M_{\Psi}^* M_{z_i}.
\]
This implies that $M_{\Psi}$ must be a constant isometry from $H_{\clf}^2(\D)$ to $H_{\cle}^2(\D)$.  Thus, $M_{\Psi} = I_{H^2(\D^n)} \otimes V$ for some isometry $V: \clf \raro \cle$, and using this we get
\[
M_{\Phi} = M_{\Gamma} (I_{H^2(\D^n)} \otimes V^*) = M_{\Gamma V^*}.
\]
Conversely, if the bounded analytic function admits the factorization $\Phi = \Gamma V^*$ for some inner function $\Gamma \in H_{\clb(\cle)}^{\infty}(\D^n)$, and some isometry $V:\clf \raro \cle$, then 
\[
T_{\Phi}T_{\Phi}^* = M_{\Gamma} (I_{H^2(\D^n)} \otimes V^*) (I_{H^2(\D^n)} \otimes V)M_{\Gamma}^* = M_{\Gamma} M_{\Gamma}^*,
\]
shows that $T_{\Phi}$ is a partial isometry. This completes the proof.
\end{proof}
We will now characterize partially isometric Toeplitz operators, which are hyponormal.  Let us first recall that a bounded operator $T$ on a Hilbert space $\clh$ is said to be hyponormal if $T^*T \geq TT^*$. We refer to the monograph \cite{MP} for an elaborate discussion on these operators. In the context of hyponormal Toeplitz operators, there has been important development starting with the celebrated result of Cowen \cite{Cowen}. In recent times, several authors have contributed to its extension to the case of block Toeplitz operators, for instance, Gu et al. \cite{GHR} and Curto et al. \cite{CHL4}. One of the criteria for a block Toeplitz operator $T_{\Phi}$ on $H_{\mathbb{C}^m}^2(\D)$ to be hyponormal is that the symbol $\Phi$ should be normal a.e. on $\mathbb{T}$ \cite[Theorem 3.3]{GHR}.  The proof shows that this feature is due to the finite dimensionality of the underlying Hilbert space in $H_{\mathbb{C}^m}^2(\D)$. The following result shows that this feature may not hold if we consider hyponormal Toeplitz operators on $H_{\cle}^2(\D^n)$, where $\cle$ can be infinite-dimensional. Before going into the proof, let us highlight that the following result is an extension of \cite[Corollary 5.1]{KPS}, and the initial part of the proof follows from their method.
\begin{thm}\label{hyponormal}
Let $T_{\Phi}$ be a partially isometric Toeplitz operator on $H_{\cle}^2(\D^n)$. Then, the following statements are equivalent.\\
$(i)$~$T_{\Phi}$ is hyponormal,\\
$(ii)$~there exists Hilbert spaces $\clf, \clg$ and inner functions $\Psi \in H_{\clb(\clf,\cle)}^{\infty}(\D^n)$, $ \Theta \in H_{\clb(\clf)}^{\infty}(\D^n)$ such that
\[
T_{\Phi} = M_{\Psi} M_{\Theta} M_{\Psi}^*.
\]
\end{thm}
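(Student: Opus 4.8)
The plan is to reduce hyponormality to an inclusion between the two Beurling-type ranges of $T_\Phi$, and then to ``divide'' the two inner functions supplied by Theorem \ref{mainthm1}. Since $T_\Phi$ is a partially isometric Toeplitz operator, Theorem \ref{mainthm1} gives a Hilbert space $\clf$ and inner functions $\Gamma, \Psi \in H_{\clb(\clf,\cle)}^{\infty}(\D^n)$ with $T_\Phi = M_\Gamma M_\Psi^*$; consequently $T_\Phi T_\Phi^* = M_\Gamma M_\Gamma^*$ and $T_\Phi^* T_\Phi = M_\Psi M_\Psi^*$, which are the orthogonal projections onto the Beurling-type subspaces $M_\Gamma H_\clf^2(\D^n) = \ran T_\Phi$ and $M_\Psi H_\clf^2(\D^n) = \ran T_\Phi^*$. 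Using the elementary fact that for orthogonal projections $P,Q$ one has $P \geq Q$ if and only if $\ran Q \subseteq \ran P$, I would first record that $T_\Phi$ is hyponormal precisely when $M_\Gamma H_\clf^2(\D^n) \subseteq M_\Psi H_\clf^2(\D^n)$.

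For the forward direction, assume this range inclusion. Then $M_\Psi M_\Psi^* M_\Gamma = M_\Gamma$, so, since $M_\Psi$ is an isometry, the operator $M_\Theta := M_\Psi^* M_\Gamma$ on $H_\clf^2(\D^n)$ satisfies $\|M_\Theta f\| = \|M_\Psi M_\Psi^* M_\Gamma f\| = \|M_\Gamma f\| = \|f\|$, i.e.\ it is an isometry. The next step is to show $M_\Theta$ commutes with every $M_{z_i}$: since $M_\Gamma f = M_\Psi(M_\Theta f)$ and $M_\Psi, M_\Gamma$ are analytic multipliers, $M_\Theta M_{z_i} f = M_\Psi^* M_\Gamma M_{z_i} f = M_\Psi^* M_{z_i} M_\Gamma f = M_\Psi^* M_\Psi M_{z_i} M_\Theta f = M_{z_i} M_\Theta f$. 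Hence $M_\Theta$ is the multiplication operator associated with some $\Theta \in H_{\clb(\clf)}^{\infty}(\D^n)$ \cite{NF}, and being an isometry, $\Theta$ is inner. Finally $M_\Gamma = M_\Psi M_\Theta$ yields $T_\Phi = M_\Gamma M_\Psi^* = M_\Psi M_\Theta M_\Psi^*$, which is the desired factorization (with $\clg$ taken to be $\clf$, or kept for book-keeping if one prefers to re-coordinatize).

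For the converse, assume $T_\Phi = M_\Psi M_\Theta M_\Psi^*$ with $\Psi,\Theta$ inner. A direct computation using $M_\Psi^* M_\Psi = I$ and $M_\Theta^* M_\Theta = I$ gives $T_\Phi^* T_\Phi = M_\Psi M_\Psi^*$ and $T_\Phi T_\Phi^* = M_\Psi M_\Theta M_\Theta^* M_\Psi^*$, so that
\[
T_\Phi^* T_\Phi - T_\Phi T_\Phi^* = M_\Psi\bigl(I_{H_\clf^2(\D^n)} - M_\Theta M_\Theta^*\bigr)M_\Psi^* = M_\Psi P_{\ker M_\Theta^*} M_\Psi^* \geq 0,
\]
whence $T_\Phi$ is hyponormal; it is also partially isometric, since $T_\Phi T_\Phi^* T_\Phi = M_\Psi M_\Theta M_\Theta^* M_\Theta M_\Psi^* = M_\Psi M_\Theta M_\Psi^* = T_\Phi$.

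I expect the main obstacle to be the middle step of the forward direction: upgrading the contraction $M_\Psi^* M_\Gamma$ obtained from the range inclusion to an \emph{analytic} multiplication operator with \emph{inner} symbol — that is, effectively ``dividing'' $\Gamma$ by $\Psi$ inside $H_{\clb(\clf)}^{\infty}(\D^n)$. This rests on the Beurling-type structure of both ranges together with the commutant characterization of multiplication operators on vector-valued Hardy spaces; the remaining steps are projection calculus and closely parallel the scalar argument of \cite[Corollary 5.1]{KPS}.
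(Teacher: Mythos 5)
Your proposal is correct and follows essentially the same route as the paper: factor $T_{\Phi}=M_{\Gamma}M_{\Psi}^*$ via Theorem \ref{mainthm1}, translate hyponormality into $M_{\Gamma}M_{\Gamma}^*\leq M_{\Psi}M_{\Psi}^*$, extract an intertwining operator $Z$ with $M_{\Gamma}=M_{\Psi}Z$, show $Z$ commutes with the shifts so that $Z=M_{\Theta}$ with $\Theta$ inner, and verify the converse by the same direct computation. The only (harmless) difference is that you obtain $Z=M_{\Psi}^*M_{\Gamma}$ explicitly from the projection-order/range-inclusion argument, where the paper invokes Douglas's lemma to produce the contraction $Z$; your version even gives the isometry property of $Z$ directly.
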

\begin{proof}
$T_{\Phi}$ is a partial isometry will imply that $T_{\Phi} = M_{\Gamma} M_{\Psi}^*$. Using this identity, we get,
\[
T_{\Phi}^* T_{\Phi} - T_{\Phi} T_{\Phi}^* = M_{\Psi} M_{\Psi}^* - M_{\Gamma}M_{\Gamma}^*.
\]
Now, hyponormality of $T_{\Phi}$ implies that 
\[
 M_{\Gamma}M_{\Gamma}^* \leq M_{\Psi} M_{\Psi}^*.
\]
By Douglas's lemma \cite{Douglas2}, there exists a contraction $Z: H_{\clf}^2(\D^n) \raro H_{\clf}^2(\D^n)$ such that
\[
 M_{\Gamma} = M_{\Psi} Z.
\]
This implies that for any $i \in \{1,\ldots,n\}$
\[
M_{\Psi} M_{z_i} Z =  M_{z_i}  M_{\Psi} Z = M_{z_i} M_{\Gamma} = M_{\Gamma} M_{z_i} = M_{\Psi} Z M_{z_i},
\]
Since $M_{\Psi}$ is an isometry, the above identity further implies that $Z M_{z_i} = M_{z_i} Z$. Thus, there exists $\Theta \in H_{\clb(\clf)}^{\infty}(\D^n)$ such that
\[
Z = M_{\Theta},
\]
Furthermore, $\Theta$ is an inner function because so are $\Gamma$ and $\Psi$. Hence,
\[
T_{\Phi} = M_{\Gamma} M_{\Psi}^* = M_{\Psi} M_{\Theta} M_{\Psi}^*.
\]
Conversely, if $T_{\Phi} = M_{\Psi} M_{\Theta} M_{\Psi}^*$, then 
\[
T_{\Phi}T_{\Phi}^* = M_{\Psi} M_{\Theta} M_{\Psi}^*M_{\Psi} M_{\Theta}^* M_{\Psi}^* = M_{\Psi} M_{\Theta} M_{\Theta}^* M_{\Psi}^* \leq M_{\Psi}M_{\Psi}^* = T_{\Phi}^* T_{\Phi},
\]
implies that $T_{\Phi}$ is hyponormal. This completes the proof.
\end{proof}
Normal operators on Hilbert spaces, that is, operators $T$ on $\clh$ satisfying $[T^*, T]=0$, are important cases of hyponormal operators. Let us end this section by characterizing partially isometric and normal Toeplitz operators.
\begin{cor}\label{normal}
Let $T_{\Phi}$ be a partially isometric Toeplitz operator on $H_{\cle}^2(\D^n)$. Then, the following statements are equivalent.\\
$(i)$~$T_{\Phi}$ is normal,\\
$(ii)$~there exists a Hilbert space $\clf$, an inner function $\Psi \in H_{\clb(\clf,\cle)}^{\infty}(\D^n)$, and a unitary $U \in \clb(\clf)$ such that
\[
T_{\Phi} = M_{\Psi} U M_{\Psi}^*.
\]
\end{cor}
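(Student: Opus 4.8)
The plan is to reduce the statement to Theorem \ref{hyponormal} together with Corollary \ref{unitary}, since every normal operator is hyponormal. For the direction $(i)\Rightarrow(ii)$, I would first observe that a normal $T_{\Phi}$ is in particular hyponormal, so Theorem \ref{hyponormal} supplies Hilbert spaces $\clf,\clg$ and inner functions $\Psi\in H_{\clb(\clf,\cle)}^{\infty}(\D^n)$, $\Theta\in H_{\clb(\clf)}^{\infty}(\D^n)$ with
\[
T_{\Phi}=M_{\Psi}M_{\Theta}M_{\Psi}^*.
\]
The whole point is then to show that the normality of $T_{\Phi}$ forces the inner function $\Theta$ to collapse to a constant unitary.

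The key computation is short. Using that $\Psi$ and $\Theta$ are inner (so $M_{\Psi}^*M_{\Psi}=I_{H_{\clf}^2(\D^n)}$ and $M_{\Theta}^*M_{\Theta}=I_{H_{\clf}^2(\D^n)}$), one gets
\[
T_{\Phi}^*T_{\Phi}=M_{\Psi}M_{\Theta}^*M_{\Psi}^*M_{\Psi}M_{\Theta}M_{\Psi}^*=M_{\Psi}M_{\Psi}^*,
\qquad
T_{\Phi}T_{\Phi}^*=M_{\Psi}M_{\Theta}M_{\Theta}^*M_{\Psi}^*.
\]
Setting $T_{\Phi}^*T_{\Phi}=T_{\Phi}T_{\Phi}^*$ and then compressing by $M_{\Psi}^*$ on the left and $M_{\Psi}$ on the right (again using $M_{\Psi}^*M_{\Psi}=I$) yields $I_{H_{\clf}^2(\D^n)}=M_{\Theta}M_{\Theta}^*$. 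Thus $M_{\Theta}$ is simultaneously an isometry and a co-isometry, i.e.\ unitary, so by Corollary \ref{unitary} the function $\Theta$ is a constant unitary $U\in\clb(\clf)$. Substituting back gives $T_{\Phi}=M_{\Psi}UM_{\Psi}^*$, which is exactly $(ii)$.

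For the converse $(ii)\Rightarrow(i)$ I would argue directly: if $T_{\Phi}=M_{\Psi}UM_{\Psi}^*$ with $\Psi$ inner and $U$ unitary, then $M_{\Psi}^*M_{\Psi}=I_{H_{\clf}^2(\D^n)}$ gives
\[
T_{\Phi}^*T_{\Phi}=M_{\Psi}U^*M_{\Psi}^*M_{\Psi}UM_{\Psi}^*=M_{\Psi}M_{\Psi}^*=M_{\Psi}UM_{\Psi}^*M_{\Psi}U^*M_{\Psi}^*=T_{\Phi}T_{\Phi}^*,
\]
so $[T_{\Phi}^*,T_{\Phi}]=0$; one also notes $\Psi U$ is again inner, so $T_{\Phi}=M_{\Psi U}M_{\Psi}^*$ is a partially isometric Toeplitz operator by Theorem \ref{mainthm1}, consistent with the hypothesis. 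I do not expect a genuine obstacle here; the only step requiring a little care is the transition from ``$M_{\Theta}$ unitary'' to ``$\Theta$ a constant unitary,'' which is precisely the content of Corollary \ref{unitary}, and the bookkeeping that the $M_{\Theta}^*M_{\Theta}$ and $M_{\Psi}^*M_{\Psi}$ identities are applied in the right order.
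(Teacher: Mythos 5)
Your proposal is correct and follows essentially the same route as the paper: reduce to the hyponormality factorization $T_{\Phi}=M_{\Psi}M_{\Theta}M_{\Psi}^*$ from Theorem \ref{hyponormal}, use normality and the compression $M_{\Psi}^*(\cdot)M_{\Psi}$ to get $M_{\Theta}M_{\Theta}^*=I$, and invoke Corollary \ref{unitary} to conclude $\Theta$ is a constant unitary, with the converse checked by direct computation. No gaps.
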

\begin{proof}
First let us note that if $T_{\Phi} = M_{\Psi} U M_{\Psi}^*$, then
\[
T_{\Phi}^* T_{\Phi}  = M_{\Psi} M_{\Psi}^* = T_{\Phi} T_{\Phi}^*. 
\]
Hence, $T_{\Phi}$ is both normal as well as a partial isometry.  Next, suppose that the partially isometric $T_{\Phi}$ is also normal. Since a normal operator is also hyponormal, we can use the above Theorem \ref{hyponormal} to claim that there exist Hilbert spaces $\clf, \clg$ and inner functions $\Psi \in H_{\clb(\clf,\cle)}^{\infty}(\D^n)$, $ \Theta \in H_{\clb(\clf)}^{\infty}(\D^n)$ such that
\[
T_{\Phi} = M_{\Psi} M_{\Theta} M_{\Psi}^*.
\]
Now, $T_{\Phi}$ being normal further implies that
\[
M_{\Psi} M_{\Theta}^* M_{\Psi}^* M_{\Psi} M_{\Theta} M_{\Psi}^* = M_{\Psi} M_{\Theta} M_{\Psi}^*M_{\Psi} M_{\Theta}^* M_{\Psi}^*,
\]
and hence,
\[
M_{\Psi}  M_{\Psi}^* = M_{\Psi} M_{\Theta} M_{\Theta}^* M_{\Psi}^*.
\]
This further implies that
\[
M_{\Psi} (I_{H_{\cle}^2(\D^n)} - M_{\Theta} M_{\Theta}^*) M_{\Psi}^* = 0,
\]
and therefore, if we act on the above identity by $M_{\Psi}^*$ on the left side and by $M_{\Psi}$ on the right side, then we get
\[
I_{H_{\cle}^2(\D^n)} - M_{\Theta} M_{\Theta}^* = 0.
\]
Since $M_{\Theta}$ was an isometry to begin with, the above identity further implies that $M_{\Theta}$ is a unitary operator. Using Corollary \ref{unitary}, we can deduce that $M_{\Theta}$ must be a constant unitary, in other words, $M_{\Theta} = I_{H^2(\D^n)} \otimes U$, where $U: \clf \raro \clf$ is a unitary. Hence, the Toeplitz operator $T_{\Phi}$ becomes
\[
T_{\Phi} = M_{\Psi} U M_{\Psi}^*.
\]
This completes the proof.
\end{proof}
\section{Vectorial Toeplitz operators on  Hilbert spaces}\label{abstract}
An important aspect of working with Toeplitz operators on vector-valued Hardy spaces is that they give important information about general operators on abstract Hilbert spaces. In particular, we can use our main result in Section 3 to prove a characterization for vectorial Toeplitz operators. Throughout this section, we assume $\clh$ to be a separable infinite-dimensional Hilbert space over complex numbers. To begin with, let us recall a few definitions essential for the sequel.

\begin{defn}
An operator $S$ on $\clh$ is a \textit{shift} if $S$ is an isometry and $S^{*n} \raro 0$ in the strong operator topology as $n \raro \infty$.
\end{defn}

\begin{defn}
A tuple of commuting shift operators $(S_1,\ldots,S_n)$ on $\clh$ is \textit{doubly commuting} if $S_i^* S_j = S_j S_i^*$ for all distinct $i,j \in \{1,\ldots,n\}$. 
\end{defn}
For a tuple of doubly commuting shift operators, $S$ on $\clh$, there exists a subspace of utmost importance, namely the wandering subspace denoted by 
\[
\clw(S):= \underset{k=1}{\overset{n}{\cap}} \ker S_k^*.
\]
This subspace generates the Hilbert space in the following manner.
\[
\clh = \bigoplus_{\bm{k} \in \Nat^n} S^{\bm{k}} \clw(S),
\]
for instance, see \cite[Corollary 3.2]{JS} and originally \cite{Halmos} for a single shift operator. Using this structure of $\clh$, we can define a unitary $U:\clh \raro H_{\clw(S)}^2(\D^n)$ by the following action.
\[
U (S^{\bm{k}} \eta) = M_{\bm{z}}^{\bm{k}} \eta,
\]
for all $\bm{k} \in \Nat^n$ and $\eta \in \clw(S)$. The construction of the unitary further facilitates the following intertwining relations
\begin{equation}\label{inter}
M_{z_k} U = U S_k; \quad M_{z_k}^* U = U S_k^* \quad (k \in \{1,\ldots,n\}).
\end{equation}
All the above facts will be used in the sequel.  Motivated by the definition set by Page in \cite{Page}, we define vectorial Toeplitz operators corresponding to a tuple of shifts in the following manner.
\begin{defn}
Given a $n$-tuple of doubly commuting shift operators $S=(S_1,\ldots,S_n)$ on $\clh$, a bounded operator $T$ on $\clh$ is said be $S$-\textit{Toeplitz} if $S_k^*TS_k=T$ for all $k \in \{1,\ldots,n\}$.
\end{defn}
The condition $S_k^* T S_k = T$ for all $k \in \{1,\ldots,n\}$, imply that 
\[
M_{z_k}^* U T U^* M_{z_k} = U  S_k^* T S_k U^* = U T U^*,
\]
for all $k \in \{1,\ldots,n\}$. From Theorem \ref{Toep_char}, it follows that $UTU^*$ is a Toeplitz operator on $H_{\clw(S)}^2(\D^n)$ and hence, there exists a symbol $\Phi \in L_{\clb(\cle)}^{\infty}(\mathbb{T}^n)$ such that
\begin{equation}\label{d3}
UTU^* = T_{\Phi}.
\end{equation}
Our main result in this section is also related to $S$-analytic operators as introduced by Rosenblum and Rovnyak in \cite[Section 1.6, Page 6]{RR}.

\begin{defn}
Given a $n$-tuple of doubly commuting shift operators $S=(S_1,\ldots,S_n)$ on a Hilbert space $\clh$, a bounded operator $T$ on $\clh$ is said to be $S$-analytic if 
\[
S_i T = T S_i \quad (\forall i \in \{1,\ldots,n\}).
\]
\end{defn}
We are now ready to characterize $S$-Toeplitz operators on $\clh$ that are partially isometric.

\begin{thm}
Let $S=(S_1,\ldots,S_n)$ be a $n$-tuple of doubly commuting shift operators on $\clh$ and let $T$ be a $S$-Toeplitz operator on $\clh$. Then the following are equivalent\\
$(i)$~$T$ is a partial isometry,\\
$(ii)$~there exists partially isometric $S$-analytic operators $W_1, W_2$ with equal final spaces such that
\[
T = W_1 W_2^*,
\]
and the pair $(W_1,W_2)$ satisfy the following equivalent conditions for all $k \in \{1,\ldots, n\}$:\\
$(a)$~$S_k^* W_1 P_{\ker S_k^*} W_2^* S_k = 0$, \\
$(b)$~$S_k^* W_1  P_{\clw(S)} W_2^* S_k = 0$.
\end{thm}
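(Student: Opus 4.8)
The strategy is to transport everything to the vector-valued Hardy space $H_{\clw(S)}^2(\D^n)$ via the unitary $U$ constructed above, apply Theorem \ref{mainthm1}, and then pull the resulting factorization back to $\clh$.

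First I would use the intertwining relations \eqref{inter} and the fact that $T$ is $S$-Toeplitz: as already observed in \eqref{d3}, the operator $T_\Phi := UTU^*$ is a Toeplitz operator on $H_{\clw(S)}^2(\D^n)$. Since $U$ is a unitary, $T$ is a partial isometry if and only if $T_\Phi$ is. If $T_\Phi$ is constant, then it is a unitary-times-projection of a particularly trivial type; the interesting case is when $T_\Phi$ is non-constant, and here Theorem \ref{mainthm1} gives a Hilbert space $\clf$ and inner functions $\Gamma, \Psi \in H_{\clb(\clf,\clw(S))}^{\infty}(\D^n)$ with $T_\Phi = M_\Gamma M_\Psi^*$, subject to the product conditions. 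Now I would set $W_1 := U^* M_\Gamma$ and... wait — $M_\Gamma$ maps $H_{\clf}^2(\D^n)$ into $H_{\clw(S)}^2(\D^n)$, so $U^*M_\Gamma$ maps $H_{\clf}^2(\D^n)$ into $\clh$, not $\clh$ into $\clh$. To stay inside $\clh$ I would instead work with $V_\Gamma := U^*M_\Gamma U_\clf$ where $U_\clf : \clh \to H_\clf^2(\D^n)$ is a unitary built exactly as $U$ was (this uses only that $\clh$ is separable infinite-dimensional, so $\dim \clf \le \dim\clw(S) \le \dim \clh$; one can always enlarge $\clf$ if needed, padding $\Gamma,\Psi$ by zero on the extra summand, which changes neither $M_\Gamma M_\Psi^*$ nor the product conditions). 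Then set $W_1 := V_\Gamma$ and $W_2 := V_\Psi$, both bounded operators on $\clh$.

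With this choice, $W_1W_2^* = U^*M_\Gamma U_\clf U_\clf^* M_\Psi^* U = U^* M_\Gamma M_\Psi^* U = U^*T_\Phi U = T$, giving the factorization. The $S$-analyticity of $W_1$ and $W_2$ follows from the intertwining relations: $M_\Gamma$ commutes with each $M_{z_i}$ (as $\Gamma$ is analytic), $U$ and $U_\clf$ intertwine $S_i$ with $M_{z_i}$, so $S_i W_1 = S_i U^* M_\Gamma U_\clf = U^* M_{z_i} M_\Gamma U_\clf = U^* M_\Gamma M_{z_i} U_\clf = U^* M_\Gamma U_\clf S_i = W_1 S_i$, and similarly for $W_2$. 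That $W_1, W_2$ are partial isometries with equal final spaces follows because $M_\Gamma$ and $M_\Psi$ are isometries (inner functions), so $W_1^*W_1 = U_\clf^* M_\Gamma^* M_\Gamma U_\clf = I_\clh = W_2^* W_2$; in fact both are isometries on $\clh$, and their final spaces are $\ran W_1 = U^* M_\Gamma H_\clf^2(\D^n) = U^*\ran T_\Phi = \ran T$ and $\ran W_2 = U^*\ran T_\Phi^* \cdot$... more precisely $\ran W_2 = U^* M_\Psi H_\clf^2(\D^n) = U^*\ran(T_\Phi^*) $, hmm — one should double-check that $T_\Phi^*T_\Phi$ and $M_\Psi M_\Psi^*$ agree so that the final spaces $\ran W_1$ and $\ran W_2$, which are respectively $U^*(\ran T_\Phi)$ and $U^*(\ran M_\Psi)$, need not literally coincide but have the same dimension; re-reading the hypothesis ``equal final spaces'' I would interpret it as: the final spaces are $\overline{\ran W_1}$ and $\overline{\ran W_2}$, and since $W_i$ are isometries the relevant equality is of the initial/final structure — I would pin down the precise reading and, if needed, absorb the unitary $X$ from the proof of Theorem \ref{mainthm1} (which identifies $\ran T_\Phi^*T_\Phi$ with the appropriate space) to arrange genuine equality.

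Finally, conditions $(a)$ and $(b)$: translating the product conditions of Theorem \ref{mainthm1} through $U, U_\clf$ via \eqref{inter}, the identity $M_{z_i}^* M_\Gamma P_{\ker M_{z_i}^*} M_\Psi^* M_{z_i} = 0$ of Lemma \ref{toepcondn1} becomes $S_i^* W_1 P_{\ker S_i^*} W_2^* S_i = 0$ (using $U P_{\ker S_i^*} U^* = P_{\ker M_{z_i}^*}$, which follows from $M_{z_i}^* U = U S_i^*$), which is $(a)$; likewise Lemma \ref{toepcondn2} gives $(b)$ with $P_{\clw(S)}$ in place of $P_\cle$ since $U\,P_{\clw(S)}\,U^* = P_{\clw(S)} = P_{\cle}$ under the identification $\cle = \clw(S)$. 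The equivalence of $(a)$ and $(b)$ is then just Lemma \ref{toepcondn2} transported back. For the converse $(ii)\Rightarrow(i)$: given such $W_1, W_2$, the $S$-analyticity lets me run the argument of Theorem \ref{Toep_char} to see $UTU^* = U W_1 W_2^* U^*$ is of the form $M_\Gamma M_\Psi^*$ with $\Gamma,\Psi$ analytic (and inner, since $W_i$ are partial isometries with the stated final-space condition forcing $M_\Gamma^*M_\Gamma = M_\Psi^*M_\Psi = I$), and then $T T^* T = W_1 W_2^* W_2 W_1^* W_1 W_2^* = W_1 W_2^* = T$ directly from $W_i^*W_i$ being the identity on the common initial space, so $T$ is a partial isometry.

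The main obstacle I anticipate is purely bookkeeping: matching the ``equal final spaces'' hypothesis to the isometry/unitary $X$ that appears inside the proof of Theorem \ref{mainthm1}, and making sure the dimension-counting needed to build $U_\clf$ (so that the ambient space stays $\clh$) is carried out cleanly — including the harmless enlargement of $\clf$. The operator-theoretic content is entirely inherited from Theorem \ref{mainthm1} and the two Lemmas; no genuinely new estimate is required.
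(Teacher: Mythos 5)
Your proposal is correct in substance and follows the same overall skeleton as the paper's proof: conjugate by $U$ to reduce to the Toeplitz operator $T_\Phi=UTU^*$ on $H^2_{\clw(S)}(\D^n)$, invoke Theorem \ref{mainthm1} to get $T_\Phi=M_\Gamma M_\Psi^*$, pull the factorization back to $\clh$, obtain conditions $(a)$ and $(b)$ by transporting Lemma \ref{toepcondn1} and Lemma \ref{toepcondn2}, and prove $(ii)\Rightarrow(i)$ by the direct algebraic computation $TT^*T=T$. Where you genuinely differ is the device used to make $W_1,W_2$ act on $\clh$: the paper fixes a boundary value of the inner function $\Gamma$, extends it to a constant isometry $i=I_{H^2(\D^n)}\otimes\Gamma(e^{it})\colon H^2_{\clf}(\D^n)\to H^2_{\cle}(\D^n)$, and sets $W_1=U^*M_\Gamma i^*U$, $W_2=U^*M_\Psi i^*U$; this needs no dimension bookkeeping and immediately gives partial isometries with $W_1^*W_1=U^*ii^*U=W_2^*W_2$. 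You instead use a second intertwining unitary $U_\clf\colon\clh\to H^2_\clf(\D^n)$, which exists only when $\dim\clf=\dim\clw(S)$, and you repair the deficient case by enlarging $\clf$ and padding $\Gamma,\Psi$ with zeros. That padding is legitimate (it changes neither $M_\Gamma M_\Psi^*$ nor the product conditions), but it destroys innerness, so your parenthetical claim that $W_1,W_2$ are isometries with $W_1^*W_1=I_\clh$ fails in that case; what survives, and is all that is needed, is that $W_1^*W_1=W_2^*W_2$ is a common projection. This also resolves the point you left hanging about ``equal final spaces'': in the statement (and in the paper's own construction, whose $W_1,W_2$ have ranges $\ran T$ and $\ran T^*$, which differ in general) the phrase refers precisely to the equality $W_1^*W_1=W_2^*W_2$, so no appeal to the unitary $X$ inside the proof of Theorem \ref{mainthm1} is needed. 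Finally, in the converse your detour through Theorem \ref{Toep_char} and innerness is unnecessary and not quite right as stated ($UW_iU^*$ need not be isometric); the computation $TT^*T=W_1(W_2^*W_2)(W_1^*W_1)W_2^*=W_1W_2^*=T$, which you also give, is exactly the paper's argument and suffices.
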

\begin{proof}
Since $T$ is a partial isometry, the operator $UTU^*$ is also a partial isometry, and hence, using condition (\ref{d3}), we get $T_{\Phi}$ is also a partial isometry. Now, from Theorem \ref{mainthm1}, it follows that there must exist a Hilbert space $\clf$ and inner functions $\Gamma, \Psi \in H_{\clb(\clf,\cle)}^{\infty}(\D^n)$ such that
\[
T_{\Phi} = M_{\Gamma}M_{\Psi}^*.
\]
Since $\Gamma$ is an inner function $\Gamma(e^{it}):\clf \raro \cle$ is an isometry a.e. on $\mathbb{T}^n$. We can choose any such isometry $\Gamma(e^{it})$ and extend it to an isometry $I_{H^2(\D^n)} \otimes \Gamma(e^{it}): H_{\clf}^2(\D^n) \raro H_{\cle}^2(\D^n)$. For the sake of computation, we will denote this isometry simply as $i$. This construction implies that
\begin{equation}\label{inter2}
M_{z_k} i = i M_{z_k}; \quad M_{z_k}^* i = i M_{z_k},
\end{equation}
for all $k \in \{1,\ldots,n\}$.  Furthermore, using this isometry, we get 
\[
T = U^*T_{\Phi}U = U^* M_{\Gamma}M_{\Psi}^* U = U^* M_{\Gamma}i^* UU^* i M_{\Psi}^*U = W_1 W_2^*,
\]
where $W_1 = U^* M_{\Gamma}i^*U$ and $W_2 = U^* M_{\Psi}i^* U$.  Now, for any $k \in \{1,\ldots,n\}$, we get
\begin{align*}
W_1 S_k = U^* M_{\Gamma}i^*U S_k = U^* M_{\Gamma}i^* M_{z_k} U = U^* M_{z_k} M_{\Gamma}i^* U &= S_k U^* M_{\Gamma}i^* U\\
&= S_k W_1,
\end{align*}
and similarly,
\begin{align*}
W_2 S_k = U^* M_{\Psi}i^*U S_k = U^* M_{\Psi}i^* M_{z_k} U = U^* M_{z_k} M_{\Psi}i^* U &= S_k U^* M_{\Gamma}i^* U \\
&= S_k W_2.
\end{align*}
Thus, we see that both $W_1$ and $W_2$ are $S$-analytic operators on $\clh$. Furthermore,
\[
W_1^* W_1 = U^* i i^* U = W_2^* W_2,
\]
shows that both $W_1$, and $W_2$ are partial isometries with equal final spaces. Moreover, using Lemma \ref{toepcondn2}, we know that $M_{\Gamma}M_{\Psi}^*$ is a Toeplitz operator if and only if it satisfies the following equivalent conditions for all $k \in \{1,\ldots,n\}$.
\[
M_{z_k}^* M_{\Gamma}P_{\ker M_{z_k}^*} M_{\Psi}^* M_{z_k} = 0; \quad M_{z_k}^* M_{\Gamma}P_{\cle} M_{\Psi}^* M_{z_k} = 0.
\]
Now, the above conditions are again equivalent to 
\begin{equation}\label{d1}
U^* M_{z_k}^* M_{\Gamma} i^* U U^* i P_{\ker M_{z_k}^*} M_{\Psi}^* M_{z_k} U= 0,
\end{equation}
\begin{equation}\label{d2}
U^* M_{z_k}^* M_{\Gamma} i^*U U^*iP_{\cle}M_{\Psi}^* M_{z_k} U= 0,
\end{equation}
respectively. Using the intertwining relations in (\ref{inter2}), the first condition (\ref{d1}), turns into
\[
U^* M_{z_k}^* M_{\Gamma} i^* U U^* P_{\ker M_{z_k}^*} i M_{\Psi}^* M_{z_k} U= 0.
\]
Using the intertwining relations in (\ref{inter}), we get
\[
S_k^* U^* M_{\Gamma} i^* U P_{\ker S_k^*} U^* i M_{\Psi}^*  U S_k= 0, 
\]
that is, $S_k^* W_1 P_{\ker S_k^*} W_2^* S_k= 0$. Similarly, condition (\ref{d2}) is equivalent to
\[
U^* M_{z_k}^* M_{\Gamma} i^*U U^* P_{\cle}i M_{\Psi}^* M_{z_i} U= 0,
\]
and therefore, using the intertwining relations in (\ref{inter}), we get
\[
S_k^* U^* M_{\Gamma} i^*U P_{\clw(S)} U^* i M_{\Psi}^* U S_k= 0,
\]
Thus, this condition is again equivalent to
\[
S_k^* W_1 P_{\clw(S)} W_2^* S_k = 0.
\]
This completes the proof.
\end{proof}
In the above characterization, it is indeed surprising how the product $T = W_1 W_2^*$ can be a partial isometry without any one of the $W_1, W_2$ being an isometry. It is because both the partial isometries have equal final spaces. In particular, this implies that
\[
W_1 W_2^* W_2 W_1^* = W_1 W_1^* W_1 W_1^* = W_1 W_1^*.
\]
Hence, $W_1 W_2^*$ must be a partial isometry.
\section{An alternative proof for Toeplitz operators on $H^2(\D^n)$}\label{scalar}

This section will adapt the preceding results' ideas to characterize partially isometric Toeplitz operators on the scalar-valued Hardy space on the unit polydisc. This is an alternative approach to recently obtained results by  Deepak--Pradhan--Sarkar in \cite{KPS}.

\begin{thm}\label{comcondn}
Let $\zeta, \psi$ be bounded analytic functions $\D^n$.Then the following are equivalent\\
$(i)$~$M_{\zeta}M_{\psi}^*$ is a Toeplitz operator,\\
$(ii)$~$M_{\zeta}$ and $M_{\psi}^*$ depend on disjoint set of variables.\\
$(iii)$~$[M_{\zeta},M_{\psi}^*]=0$.
\end{thm}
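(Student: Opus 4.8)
The plan is to run the cycle $(iii)\Rightarrow(i)\Rightarrow(ii)\Rightarrow(iii)$, using the algebraic Toeplitz criterion of Theorem \ref{Toep_char} for the easy first step, the product characterization of Theorem \ref{prod_mains} (specialized to $\cle=\clf=\C$) for the bridge $(i)\Leftrightarrow(ii)$, and a tensor-product regrouping of the variables for $(ii)\Rightarrow(iii)$.

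For $(iii)\Rightarrow(i)$: if $M_\zeta M_\psi^*=M_\psi^* M_\zeta$, then since $\zeta,\psi\in H^\infty(\D^n)$ both $M_\zeta$ and $M_\psi$ commute with every $M_{z_i}$, hence $M_\psi^*$ commutes with every $M_{z_i}^*$, and together with $M_{z_i}^*M_{z_i}=I$ one gets
\[
M_{z_i}^* M_\zeta M_\psi^* M_{z_i}=M_{z_i}^* M_\psi^* M_\zeta M_{z_i}=M_\psi^* M_{z_i}^* M_{z_i} M_\zeta=M_\psi^* M_\zeta=M_\zeta M_\psi^*
\]
for all $i$, so $M_\zeta M_\psi^*$ is Toeplitz by Theorem \ref{Toep_char}. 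For $(i)\Rightarrow(ii)$: Theorem \ref{prod_mains} with $\cle=\clf=\C$ says that $M_\zeta M_\psi^*$ is Toeplitz iff $\big(\zeta(\bm{\lambda})-\zeta_k(\bm{\lambda})\big)\overline{\big(\psi(\bm{\mu})-\psi_k(\bm{\mu})\big)}=0$ for all $\bm{\lambda},\bm{\mu}\in\D^n$ and all $k\in\{1,\ldots,n\}$. Fixing $k$, this is a product of a scalar depending only on $\bm{\lambda}$ and a scalar depending only on $\bm{\mu}$, so it vanishes on all of $\D^n\times\D^n$ exactly when $\zeta-\zeta_k\equiv0$ or $\psi-\psi_k\equiv0$ on $\D^n$; comparing power series, this means $\zeta$ does not depend on $z_k$ or $\psi$ does not depend on $z_k$. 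As $k$ ranges over $\{1,\ldots,n\}$, no variable appears in both $\zeta$ and $\psi$, i.e.\ $M_\zeta$ and $M_\psi^*$ depend on disjoint sets of variables.

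For $(ii)\Rightarrow(iii)$: let $A\subseteq\{1,\ldots,n\}$ be the set of variables $\zeta$ genuinely depends on, so by hypothesis $\psi$ depends only on variables outside $A$. Regrouping coordinates gives the canonical unitary $H_{\cle}^2(\D^n)\cong H^2(\D^{|A|})\otimes H^2(\D^{\,n-|A|})$ (with $\cle$-coefficients suppressed since $\cle=\C$), under which $M_{z_i}=M_{z_i}\otimes I$ for $i\in A$ and $M_{z_i}=I\otimes M_{z_i}$ for $i\notin A$; accordingly $M_\zeta=M_\zeta\otimes I$ and $M_\psi^*=I\otimes M_\psi^*$. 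Operators supported on different legs of a tensor product commute, so $[M_\zeta,M_\psi^*]=[M_\zeta\otimes I,\,I\otimes M_\psi^*]=0$, closing the loop.

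The whole argument is light once Theorem \ref{prod_mains} is available; the one genuinely load-bearing observation is the elementary fact that a product $f(\bm{\lambda})\overline{g(\bm{\mu})}$ vanishing identically on $\D^n\times\D^n$ forces $f\equiv0$ or $g\equiv0$ — this scalar dichotomy (which has no analogue for operator values) is precisely what turns the operator identity of Theorem \ref{prod_mains} into the clean ``disjoint variables'' statement $(ii)$. I do not expect any real obstacle beyond being careful with the tensor-product bookkeeping in $(ii)\Rightarrow(iii)$ and with the passage from ``$\zeta=\zeta_k$ as functions'' to ``$\zeta$ is independent of $z_k$''.
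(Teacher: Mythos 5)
Your proposal is correct and follows essentially the same route as the paper: the key implication $(i)\Leftrightarrow(ii)$ rests on the paper's product characterization (you invoke Theorem \ref{prod_mains} in its pointwise form and use the scalar dichotomy $f(\bm{\lambda})\overline{g(\bm{\mu})}\equiv 0\Rightarrow f\equiv 0$ or $g\equiv 0$, while the paper uses the equivalent Fourier-coefficient form of Proposition \ref{main2} with the analogous coefficient dichotomy). Your explicit verifications of $(iii)\Rightarrow(i)$ via Theorem \ref{Toep_char} and of $(ii)\Rightarrow(iii)$ via the tensor-product splitting are correct and simply spell out what the paper dismisses as easily verified.
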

\begin{proof}
Let us denote the bounded analytic functions $\zeta, \psi$ on $\D^n$ in the following form
\[
\zeta(\bm{z}):= \sum_{\bm{l} \in \Nat^n} a_{\bm{l}} z^{\bm{l}}  ; \quad \psi(\bm{z}):= \sum_{\bm{m} \in \Nat^n} b_{\bm{m}} z^{\bm{m}} \quad (z \in \D^n).
\]
From Theorem \ref{main2}, we know that $M_{\zeta}M_{\psi}^*$ is a Toeplitz operator if and only if,
\[
a_{\bm{l}+ e_i} \bar{b}_{\bm{m}+ e_i} = 0.
\]
for all $\bm{l}, \bm{m} \in \Nat^n$ and $i \in \{1,\ldots,n\}$. From this identity it follows that for any given $i \in \{1,\ldots,n\}$, either $a_{\bm{l}+ e_i} =0$ for all $\bm{l} \in \Nat^n$, or $b_{\bm{m}+ e_i} = 0$ for all $\bm{m} \in \Nat^n$ or both.  If $a_{\bm{l}+ e_i} =0$, for all $\bm{l} \in \Nat^n$, then it implies that $\zeta(\bm{z})$ does not depend on the $z_i$-variable. Similarly, if $b_{\bm{m}+ e_i} =0$, for all $\bm{m} \in \Nat^n$, then it will imply that $\psi(\bm{z})$ does not depend on the $z_i$-variable. Thus, from this discussion it follows that if $M_{\zeta}M_{\psi}^*$ is a Toeplitz operator then for each $i \in \{1,\ldots,n\}$,  any one of the following cases can hold:\\
$(i)$~$\zeta(\bm{z})$ depend on the variable $z_i$, but $\psi(\bm{z})$ does not depend on the variable $z_i$,\\
$(ii)$~$\psi(\bm{z})$ depend on the variable $z_i$, but $\zeta(\bm{z})$ does not depend on the variable $z_i$,\\
$(iii)$~both $\zeta(\bm{z})$ and $\psi(\bm{z})$ does not depend on the variable $z_i$.\\
This completes the $(i) \implies (ii)$ proof.  The other directions $(ii) \implies (iii)$, and $(iii) \implies (i)$ can be easily verified.
\end{proof}
Let us end this section with a new proof for the main result in \cite[Theorem 1.1]{KPS}.
\begin{thm}
A Toeplitz operator $T_{\phi}$ on $H^2(\D^n)$ is a partial isometry if and only if there exist inner functions $\zeta, \psi \in H^{\infty}(\D^n)$ depending on the disjoint set of variables such that 
\[
T_{\phi} = M_{\psi}^* M_{\zeta}.
\]
\end{thm}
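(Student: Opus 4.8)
The plan is to obtain this statement as a short consequence of the two main theorems already proved in this paper: the factorization Theorem~\ref{mainthm1} for partially isometric Toeplitz operators on vector-valued Hardy spaces, and the commutation criterion of Theorem~\ref{comcondn} for products $M_{\zeta}M_{\psi}^{*}$ of analytic scalar symbols. The only extra ingredient needed is the observation that, in the scalar-valued setting $\cle=\mathbb{C}$, the auxiliary coefficient Hilbert space produced by Theorem~\ref{mainthm1} is forced to be one-dimensional, so that the inner functions $\Gamma,\Psi$ occurring there are in fact ordinary scalar inner functions.

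In more detail, I would first dispose of the degenerate cases: if $T_{\phi}$ is constant, then its symbol is a constant partial isometry on $\mathbb{C}$, hence either $\phi\equiv 0$ or $\phi$ is a unimodular constant, and in the latter case $T_{\phi}=M_{\phi}=M_{1}^{*}M_{\phi}$ with $1$ and $\phi$ trivially depending on disjoint (empty) sets of variables. So assume $T_{\phi}$ is non-constant and non-zero. Applying Theorem~\ref{mainthm1} with $\cle=\mathbb{C}$, there are a Hilbert space $\clf$ and inner functions $\Gamma,\Psi\in H_{\clb(\clf,\mathbb{C})}^{\infty}(\D^{n})$ with $T_{\phi}=M_{\Gamma}M_{\Psi}^{*}$. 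Since $\Gamma(\bm{z})\colon\clf\to\mathbb{C}$ is an isometry a.e.\ on $\mathbb{T}^{n}$ and $\Gamma\neq 0$ (as $T_{\phi}\neq 0$), the space $\clf$ must be one-dimensional; identifying $\clf$ with $\mathbb{C}$, the functions $\Gamma$ and $\Psi$ become scalar inner functions, say $\zeta$ and $\psi$. Now $M_{\zeta}M_{\psi}^{*}=T_{\phi}$ is a Toeplitz operator, so Theorem~\ref{comcondn} gives that $\zeta$ and $\psi$ depend on disjoint sets of variables and that $[M_{\zeta},M_{\psi}^{*}]=0$; hence $T_{\phi}=M_{\zeta}M_{\psi}^{*}=M_{\psi}^{*}M_{\zeta}$, which is the desired factorization. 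For the converse, if $T_{\phi}=M_{\psi}^{*}M_{\zeta}$ with $\zeta,\psi$ inner and depending on disjoint variables, then $M_{\psi}^{*}$ and $M_{\zeta}$ commute, so $T_{\phi}=M_{\zeta}M_{\psi}^{*}$, and using $M_{\zeta}^{*}M_{\zeta}=I=M_{\psi}^{*}M_{\psi}$ one checks $T_{\phi}T_{\phi}^{*}T_{\phi}=M_{\zeta}M_{\psi}^{*}M_{\psi}M_{\zeta}^{*}M_{\zeta}M_{\psi}^{*}=M_{\zeta}M_{\psi}^{*}=T_{\phi}$, so $T_{\phi}$ is a partial isometry.

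I do not expect a serious obstacle here: the substantive content has already been carried out in Theorems~\ref{mainthm1} and~\ref{comcondn}, and the present argument is essentially bookkeeping. The one point requiring a moment of care is the dimension count showing $\dim\clf=1$ --- equivalently, that an isometry into $\mathbb{C}$ can only have a one-dimensional domain --- together with the handling of the constant and zero symbols, which fall technically outside the hypotheses of Theorem~\ref{mainthm1}.
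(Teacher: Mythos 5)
Your proposal is correct and takes essentially the same route as the paper: the paper also reduces to the vector-valued machinery (it re-runs the argument of Theorem~\ref{mainthm1} via Theorem~\ref{doubcom}, obtaining scalar inner functions and a unimodular constant where you instead invoke Theorem~\ref{mainthm1} directly and note that an isometry into $\mathbb{C}$ forces $\dim\clf=1$), and then applies Theorem~\ref{comcondn} to get disjoint variables and the commutation yielding $T_{\phi}=M_{\psi}^{*}M_{\zeta}$, with the same converse computation. Your dimension count and the explicit treatment of constant symbols are harmless bookkeeping refinements of the same proof.
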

\begin{proof}
We proceed as in the proof of Theorem \ref{mainthm1}. Since $T_{\phi}$ is a partial isometry, we know from Theorem \ref{doubcom}, that both $\ran T_{\phi}$ and $\ran T_{\phi}^*$ are Beurling type invariant subspaces of $H^2(\D^n)$. Hence, there must exist inner functions $\gamma, \psi \in H^{\infty}(\D^n)$ such that 
\[
\ran T_{\phi} =M_{\gamma} H^2(\D^n); \quad \ran T_{\phi}^* = M_{\psi} H^2(\D^n),
\]
If we follow the proof of Theorem \ref{mainthm1}, we can construct an unitary $X: \mathbb{C} \raro \mathbb{C}$ such that 
\[
T_{\phi} = M_{\gamma} X M_{\psi}^*.
\]
In other words, $T_{\phi} = M_{\zeta} M_{\psi}^*$, where 
\[
\zeta(\bm{z}):= \lambda \gamma(\bm{z}) \quad (\bm{z} \in \D^n),
\]
for some uni-modular constant $\lambda \in \mathbb{C}$.  From the above Theorem \ref{comcondn}, $M_{\zeta} M_{\psi}^*$ is a Toeplitz operator if and only if $\zeta$ and $\psi$ depend on the disjoint set of variables. Thus, we can write
\[
T_{\phi} = M_{\psi}^* M_{\zeta}.
\]
Conversely, if the Toeplitz operator admits the factorization $T_{\phi} = M_{\psi}^* M_{\zeta}$ for some inner functions $\psi$ and $\zeta$ depending on disjoint set of variables, then
\[
T_{\phi}T_{\phi}^* = M_{\psi}^* M_{\zeta} M_{\zeta}^* M_{\psi} = M_{\psi}^* M_{\psi}  M_{\zeta} M_{\zeta}^* = M_{\zeta} M_{\zeta}^*,
\]
shows that $T_{\phi}$ is a partial isometry. This completes the proof.
\end{proof}
Our methods, as shown in the above result, can be used similarly to give an alternative proof of the following characterization of Brown and Douglas \cite{BD}.
\begin{thm}
A Toeplitz operator $T_{\phi}$ on $H^2(\D)$ is a partial isometry if and only if there exists a inner function $\theta \in H^{\infty}(\D)$ such that $T_{\phi} = M_{\theta}  \text{ or } T_{\phi} = M_{\theta}^*.$
\end{thm}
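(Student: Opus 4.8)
The plan is to obtain this as the one-variable specialization of the preceding theorem on $H^2(\D^n)$. By that result applied with $n=1$, the operator $T_\phi$ on $H^2(\D)$ is a partial isometry if and only if there exist inner functions $\zeta,\psi\in H^{\infty}(\D)$ depending on disjoint sets of variables such that $T_\phi=M_\psi^*M_\zeta$. The converse direction of the statement to be proved is then immediate: if $T_\phi=M_\theta$ with $\theta$ inner, then $M_\theta$ is an isometry, hence a partial isometry; and if $T_\phi=M_\theta^*$, then, since the adjoint of a partial isometry is again a partial isometry, $T_\phi$ is a partial isometry as well. So the real content is in unpacking the forward direction, and there the only point requiring any care is the meaning of ``disjoint sets of variables'' in the presence of a single variable $z$.

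That condition forces at least one of $\zeta,\psi$ to be a constant function, and an inner constant is necessarily unimodular. I would split into two (overlapping) cases. If $\psi\equiv\mu$ with $|\mu|=1$, then $M_\psi^*=\bar\mu\,I_{H^2(\D)}$, so
\[
T_\phi=M_\psi^*M_\zeta=\bar\mu\,M_\zeta=M_{\bar\mu\zeta},
\]
and $\theta:=\bar\mu\zeta$ is inner, giving $T_\phi=M_\theta$. If instead $\zeta\equiv\lambda$ with $|\lambda|=1$, then $M_\zeta=\lambda\,I_{H^2(\D)}$, so
\[
T_\phi=M_\psi^*M_\zeta=\lambda\,M_\psi^*=M_{\bar\lambda\psi}^*,
\]
and $\theta:=\bar\lambda\psi$ is inner, giving $T_\phi=M_\theta^*$. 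Since one of the two cases always applies (and when both do, $T_\phi$ is a unimodular scalar multiple of the identity, covered by either choice), this exhausts the possibilities and completes the forward direction.

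There is essentially no obstacle remaining at this stage: all the difficulty has already been absorbed into Theorem \ref{doubcom} (the range of a partially isometric Toeplitz operator is Beurling-type), Theorem \ref{prod_mains} and Theorem \ref{comcondn} (the criterion for $M_\zeta M_\psi^*$ to be Toeplitz), and the factorization argument used to prove the preceding theorem. If one prefers a self-contained argument rather than quoting the preceding theorem, one can repeat its proof verbatim: Theorem \ref{doubcom} produces inner functions $\gamma,\psi\in H^{\infty}(\D)$ with $\ran T_\phi=\gamma H^2(\D)$ and $\ran T_\phi^*=\psi H^2(\D)$; the unitary $X:\C\to\C$ built in that proof is a unimodular scalar, so $T_\phi=M_\zeta M_\psi^*$ with $\zeta=\lambda\gamma$ for some $|\lambda|=1$; and Theorem \ref{comcondn} then forces one of $\zeta,\psi$ to be constant, after which the same bookkeeping as above yields $T_\phi=M_\theta$ or $T_\phi=M_\theta^*$.
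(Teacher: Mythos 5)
Your proposal is correct and follows essentially the same route the paper intends: the paper itself derives this Brown--Douglas characterization by noting that the argument for the polydisc theorem (Beurling-type ranges via Theorem \ref{doubcom}, the factorization $T_{\phi}=M_{\zeta}M_{\psi}^*$, and the disjoint-variables criterion of Theorem \ref{comcondn}) applies verbatim, and your observation that for $n=1$ disjointness of variables forces one of $\zeta,\psi$ to be a unimodular constant, yielding $T_{\phi}=M_{\theta}$ or $T_{\phi}=M_{\theta}^*$, is exactly the intended specialization.
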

\section{Some related questions}\label{questions}
Based on the results of this article, we can ask several interesting questions for Toeplitz operators on vector-valued Hardy spaces worthy of further investigation. \vspace{1mm}\\
(\textbf{I}) Characterize Toeplitz operators with shift-invariant range spaces. \vspace{1mm}\\
A significant contribution of this article is to show that if we start with a somewhat nice Toeplitz operator, then the range is Beurling-type. This gives rise to a fascinating problem. \vspace{1mm}\\
(\textbf{II}) Characterize Toeplitz operators with Beurling-type range spaces.\vspace{1mm}\\
In Corollary \ref{symb}, we have seen that a partially isometric Toeplitz operator has a partially isometric symbol, a.e. on $\mathbb{T}^n$, but what can we say about the converse? \vspace{1mm}\\
(\textbf{III}) Characterize symbols $\Phi \in L_{\clb(\cle)}^{\infty}(\mathbb{T}^n)$ such that $T_{\Phi}$ is a partial isometry.\vspace{1mm}\\
Unlike in the scalar cases, it is still not clear when partially isometric Toeplitz operators on $H_{\cle}^2(\D^n)$ become power partial isometries, so we end with the following question. \vspace{1mm}\\
(\textbf{IV}) Characterize Toeplitz operators, which are power partial isometries. 

\section*{Acknowledgement}
The author acknowledges the hospitality extended by the Indian Institute of Science, where a part of this work was completed. The Department of Science and Technology supports the author via the INSPIRE Faculty research grant DST/INSPIRE/04/2019/000769.


\begin{thebibliography}{99}

\bibitem{BRG}
J.A. Ball, I. Gohberg, L. Rodman, 
\emph{Interpolation of rational matrix functions,}
Oper. Theory Adv. Appl., 45
Birkhäuser Verlag, Basel, 1990. xii+605 pp.
ISBN:3-7643-2476-7.

\bibitem{BD}
A.  Brown and R.G. Douglas,  
\emph{Partially isometric Toeplitz operators,} Proc. Amer. Math. Soc. 16 (1965), 681--682. 

\bibitem{BH} 
A. Brown, P.R. Halmos, 
\emph{Algebraic properties of Toeplitz operators,}
Journal für die reine und angewandte Mathematik (1964), Volume: 213, page 89--102

\bibitem{BS}
A. B\"ottcher, B. Silbermann, 
\emph{Analysis of Toeplitz operators,}Springer-Verlag, Berlin, 1990. 512 pp. ISBN: 3-540-52147-X.

\bibitem{Cowen}
C. C. Cowen, 
\emph{Hyponormality of Toeplitz operators,} Proc. Amer. Math. Soc. 103:3
(1988), 809–812

\bibitem{CHL4}
R. E. Curto, I.S. Hwang, W.Y. Lee,
\emph{Hyponormality and subnormality of block Toeplitz operators},
Advances in Mathematics, Volume 230, Issues 4--6, 2012, Pages 2094--2151.

\bibitem{CHL}
R.E. Curto, I.S. Hwang, W.Y. Lee,
\emph{Which subnormal Toeplitz operators are normal or analytic?} J. Funct. Anal. 263 (2012), no. 8, 2333--2354. 

\bibitem{CHL2}
R.E. Curto, I.S. Hwang, W.Y. Lee,
\emph{Matrix functions of bounded type: an interplay between function theory and operator theory,} Mem. Amer. Math. Soc. 260 (2019), no. 1253, v+100 pp.

\bibitem{CHL3}
R.E. Curto, I.S. Hwang, W.Y. Lee, \emph{Operator-valued rational functions.,} J. Funct. Anal. 283 (2022), no. 9, Paper No. 109640, 23 pp.

\bibitem{DPS}
R. Debnath, D. K. Pradhan, J. Sarkar,
\emph{Pairs of inner projections and two applications,} Journal of Functional Analysis,
Volume 286, Issue 2, 2024.

\bibitem{KPS}
Deepak K. D., D. Pradhan and J. Sarkar, 
\emph{Partially isometric Toeplitz operators on the polydisc,} Bulletin of the London Mathematical Society, Volume 54, (2022), 1350--1362.

\bibitem{Douglas2}
R.G. Douglas, 
\emph{On majorization, factorization, and range inclusion of operators on Hilbert space,}
Proc. Amer. Math. Soc. 17 (1966), 413--415. 

\bibitem{Douglas}
R.G. Douglas, 
\emph{ Banach algebra techniques in the theory of Toeplitz operators,}  CBMS Regional Conference Series in Mathematics, No. 15. American Mathematical Society, Providence, RI, 1973. v+53 pp. 

\bibitem{FF}
C. Foias, A.E. Frazho, 
\emph{The commutant lifting approach to interpolation problems,}
Oper. Theory Adv. Appl., 44
Birkhäuser Verlag, Basel, 1990. xxiv+632 pp.

\bibitem{GZ}
C. Gu, D. Zheng,
\emph{Products of block Toeplitz operators,} Pacific Journal of Mathematics, 
Vol. 185 (1998), No. 1, 115--148.

 \bibitem{Gu2}
C. Gu, 
\emph{Some algebraic properties of Toeplitz
and Hankel operators on polydisk,}
Arch. Math. 80 (2003) 393--405.

\bibitem{GHR}
C. Gu, J. Hendricks, and D. Rutherford, \emph{Hyponormality of block Toeplitz operators,} Pacific J. Math. 223 (2006), 95--111.

\bibitem{Gu}
C. Gu, 
\emph{Multiplicative properties of infinite block Toeplitz and Hankel matrices,} Toeplitz operators and random matrices—in memory of Harold Widom, 371--399,
Oper. Theory Adv. Appl., 289, Birkhäuser/Springer, Cham, [2022].

\bibitem{GL}
C. Gu, S. Luo,
\emph{Invariant subspaces of the direct sum of forward and backward shifts on vector-valued Hardy spaces,} J. Funct. Anal. 282 (2022), no.9, Paper No. 109419, 31 pp.

\bibitem{Halmos}
P. R. Halmos,
\emph{Shifts on Hilbert spaces
Halmos,} J. Reine Angew. Math. 208 (1961), 102--112.

\bibitem{HM}
P. R. Halmos and J. McLaughlin, 
\emph{Partial isometries,} Pacific J. Math. 13 (1963), 585--596.

\bibitem{HW}
P. R. Halmos and L. Wallen. 
\emph{Powers of partial isometries,} Indiana Univ. Math. J. 19 (1970), 657--663.

\bibitem{Hayashi}
E. Hayashi,
\emph{The kernel of a Toeplitz operator,}
Integral Equations Operator Theory 9 (1986),
588--591.

\bibitem{MSS}
A. Maji, J. Sarkar and S. Sarkar, 
\emph{Toeplitz and asymptotic Toeplitz operators on $H^2(\D^n)$,} Bull. Sci. Math. 146 (2018), 33--49.

\bibitem{Mandrekar}
V. Mandrekar, 
\emph{The validity of Beurling theorems in polydiscs,} Proc. Amer. Math. Soc. 103 (1988), 145--148.

\bibitem{MP}
M. Martin and M. Putinar,
\emph{Lectures on hyponormal operators,}
Operator Theory: Advances and Applications, 39. Birkhäuser Verlag, Basel, 1989. 304 pp. ISBN: 3-7643-2329-9.

\bibitem{NF}
B. Sz.-Nagy, C. Foias, H. Bercovici, L. K\'erchy,
\emph{Harmonic analysis of operators on Hilbert space,} Second edition. Revised and enlarged edition. Universitext. Springer, New York, 2010. xiv+474 pp. ISBN: 978-1-4419-6093-1.

\bibitem{Page}
L.B. Page, 
\emph{Bounded and compact vectorial Hankel operators,} Trans. Amer. Math. Soc.150 (1970), 529--539.

\bibitem{Peller}
V.V. Peller, 
\emph{Hankel Operators and Their Applications,} Springer-Verlag, 2003.

\bibitem{RR}
M. Rosenblum and J. Rovnyak, 
\emph{Hardy classes and operator theory. (English summary).} Corrected reprint of the 1985 original Dover Publications, Inc., Mineola, NY, 1997. xiv+161 pp. ISBN:0-486-69536-0.

\bibitem{Sarason}
D. Sarason
\emph{Kernels of Toeplitz operators,} Toeplitz Operators and Related Topics, Birkh¨auser
Verlag, Basel, 1994, pp. 153--164.

\bibitem{SSW}
J. Sarkar, A. Sasane and B. Wick, 
\emph{Doubly commuting submodules of the Hardy module over polydiscs,} Studia Mathematica, 217 (2013), no 2, 179--192.

\bibitem{JS} 
J. Sarkar,
\emph{Wold decomposition for doubly commuting isometries,} Linear Algebra and its Applications, 445 (2014), 289--301. 

\bibitem{Timotin}
D. Timotin, 
\emph{ The invariant subspaces of $S \oplus S^*$,} Concr. Oper. 7 (2020), no. 1, 116--123. 

\end{thebibliography}
\end{document}